\documentclass[a4paper,11pt,reqno,noindent]{amsart}
\usepackage[centertags]{amsmath}
\usepackage{amsfonts,amssymb,amsthm,dsfont,cases,amscd,esint,enumerate}
\usepackage[T1]{fontenc}
\usepackage[english]{babel}
\usepackage[applemac]{inputenc}
\usepackage{newlfont}
\usepackage{color}
\usepackage[body={15cm,21.5cm},centering]{geometry} 
\usepackage{fancyhdr}
\pagestyle{fancy}
\fancyhf{}

\fancyhead[RO,LE]{\footnotesize\thepage}
\fancyhead[LO]{\scriptsize\rightmark}
\fancyhead[RE]{\scriptsize\leftmark}

\setlength{\headheight}{12pt}  
\setlength{\headsep}{25pt} 
\usepackage{enumerate}

\theoremstyle{plain}
\newtheorem{theor10}{Theorem}

\newtheorem{prop10}[theor10]{Proposition}

\newtheorem{theor0}{Theorem}[section]
\newenvironment{theor}
  {\pushQED{\qed}\begin{theor0}}
  {\popQED\end{theor0}}
\newtheorem{lem0}[theor0]{Lemma}
\newenvironment{lem}
  {\pushQED{\qed}\begin{lem0}}
  {\popQED\end{lem0}}
\newtheorem{prop0}[theor0]{Proposition}
\newenvironment{prop}
  {\pushQED{\qed}\begin{prop0}}
  {\popQED\end{prop0}}
\newtheorem{cor0}[theor0]{Corollary}

\theoremstyle{definition}
\newtheorem{rem0}[theor0]{Remark}
\newenvironment{rem}
  {\pushQED{\qed}\begin{rem0}}
  {\popQED\end{rem0}}

\numberwithin{equation}{section}

\newcommand{\Sp}{\mathbb S}
\newcommand{\e}{\varepsilon}

\newcommand{\dd}{\operatorname{d}}
\newcommand{\Pc}{\mathcal{P}}

\newcommand{\Gc}{\mathcal G}
\newcommand{\dist}{\operatorname{dist}}
\newcommand{\R}{\mathbb R}
\newcommand{\Ic}{\mathcal I}
\newcommand{\loc}{{\operatorname{loc}}}
\newcommand{\Id}{\operatorname{Id}}
\newcommand{\D}{\operatorname{D}}
\newcommand{\ee}{e}
\newcommand{\supess}{\operatorname{sup\,ess}}
\newcommand{\supessd}[1]{\mathrel{\mathop{\supess}\limits_{#1}}}
\newcommand{\Bb}{\bar{\boldsymbol B}}
\newcommand{\Ld}{\operatorname{L}}
\newcommand{\supp}{\operatorname{supp}}
\newcommand{\Div}{{\operatorname{div}}}
\newcommand{\Sym}{{\operatorname{sym}}}
\newcommand{\Skew}{{\operatorname{skew}}}

\newcommand{\step}[1]{\noindent \textit{Step} #1.}
\newcommand{\substep}[1]{\noindent \textit{Substep} #1.}

\usepackage[colorlinks,citecolor=black,urlcolor=black]{hyperref}

\title[Semi-dilute rheology of particle suspensions]{Semi-dilute rheology of particle suspensions: derivation of Doi-type models}

\author[M. Duerinckx]{Mitia Duerinckx}
\address[Mitia Duerinckx]{Universit\'e Libre de Bruxelles, D\'epartement de Math\'ematique, 1050~Brussels, Belgium}
\email{mitia.duerinckx@ulb.be}

\begin{document}
\selectlanguage{english}

\begin{abstract}
This work is devoted to the large-scale rheology of suspensions of non-Brownian inertialess rigid particles, possibly self-propelling, suspended in Stokes flow. Starting from a hydrodynamic model, we derive a semi-dilute mean-field description in form of a Doi-type model, which is given by a `macroscopic' effective Stokes equation coupled with a `microscopic' Vlasov equation for the statistical distribution of particle positions and orientations. This accounts for some non-Newtonian effects since the viscosity in the effective Stokes equation depends on the local distribution of particle orientations via Einstein's formula. The main difficulty is the detailed analysis of multibody hydrodynamic interactions between the particles, which we perform by means of a cluster expansion combined with a multipole expansion in a suitable dilute regime.
\end{abstract}

\maketitle
\setcounter{tocdepth}{1}
\tableofcontents

\section{Introduction and main results}

\subsection{General overview}\label{sec:overview}
Suspensions of inertialess rigid particles in a Stokes flow are omnipresent both in natural phenomena and in practical applications, and they display reputedly complex rheological behaviors on large scales, including non-Newtonian effects, e.g.~\cite{Doi-Edwards-88,Saintillan-18}. Such behaviors are easily understandable heuristically. Indeed, on the one hand, averaging is expected to occur on large scales, leading to a notion of effective viscosity for the suspension. On the other hand, this effective viscosity depends on the local spatial arrangement of the particles at the microscopic scale, which itself evolves with the fluid flow and can thus adapt to external forces over time. This flow-induced microstructure can thus result in a nonlinear response to external forces, and hence in non-Newtonian effects on large scales. The complete understanding of such behaviors from a micro-macro perspective would require coupling homogenization with microstructure dynamics, which remains a completely open problem.
A natural simplification amounts to focussing on the dilute regime: in that case, particles are sparse and interact little, so only reduced information on the microstructure matters, for which a dynamical description could be simpler.
In this line of research, previous work has mainly been devoted to the following two preliminary questions:
\begin{enumerate}[---]
\item {\it Averaging {of the fluid flow} for a `given' microstructure:}\\
Given instantaneous particle positions, the Stokes problem defining the fluid velocity can be approximated on large scales by an effective Stokes equation with some effective viscosity, say $\Bb$. This is now well-understood in the framework of homogenization theory~\cite{DG-21a,DG-21c} and we refer to~\cite{DG-22-review} for a review.
\smallskip\item {\it Semi-dilute expansion of the effective viscosity:}\\
In the dilute regime, the effective viscosity $\Bb$ can be expanded with respect to the particle volume fraction $\lambda\ll1$.
To first order, this expansion takes form of the celebrated Einstein formula
\begin{equation}\label{eq:Einstein}
\Bb=\Id+\lambda\Bb_{1}+\ldots,
\end{equation}
where $\Bb_{1}$ only depends on the single-particle distribution of orientations.
The next-order correction further involves the statistical distribution of pairs of particles on the microscale. This correction can actually be as large as $O(\lambda)$ {if an $O(1)$ fraction of the particles form clusters}, while it is $O(\lambda^2)$ in case of well-separated particles.
The expansion can be pursued to higher orders in form of a cluster expansion and is now well understood. For details, see~\cite{DG-21b,DG-22-review} and references therein.
\end{enumerate}
With these results at hand, it remains to couple homogenization of the fluid flow with microstructure dynamics in the semi-dilute regime. As Einstein's first-order approximation~\eqref{eq:Einstein} only involves the single-particle distribution of orientations, and as corrections to the effective viscosity are~$O(\lambda^2)$ in case of well-separated particles, we can expect to achieve a mean-field description of the dynamics with accuracy $O(\lambda^2)$ in that case:
it would take form of an effective Stokes equation coupled to a Vlasov equation for the distribution of orientations; see~\eqref{eq:Doi0-1}--\eqref{eq:Doi0-2} below. This corresponds to the so-called Doi model first derived formally in~\cite{Jeffery-22,Hinch-Leal-72,Brenner-74}, which accounts for non-Newtonian effects due to the collective orientation of the particles.

The first rigorous results on the dilute dynamics~\cite{Jabin-Otto-04,Hofer-18,Mecherbet-19} focused on the leading-order dilute mean-field description, neglecting Einstein's $O(\lambda)$ correction to the effective viscosity~\eqref{eq:Einstein}: this leads to a simpler transport-Stokes system devoid of any non-Newtonian effect.
In~\cite{Hofer-Schubert-21}, Höfer and Schubert went one step further and managed to capture Einstein's correction in the effective Stokes equation, but their analysis was restricted to the case of spherical particles: orientations then play no role and no non-Newtonian effect is described.
In the present work, we consider non-spherical particles, we describe the mean-field distribution of their orientations, and we derive a Doi-type model, rigorously describing non-Newtonian effects for the first time in a micro-macro limit. We further extend our analysis to the case of active suspensions, including the effects of particle self-propulsion: this leads to an additional elastic stress in the effective Stokes equation as was indeed predicted in~\cite{Saintillan-Shelley-08,Haines-Aranson-09,Saintillan-10,Potomkin-Ryan-Berlyand-16,Degond-19} and first derived in~\cite{Girodroux-Lavigne-22,Bernou-Duerinckx-Gloria-22} in an equilibrium setting.

We emphasize the following two fundamental limitations in the dilute mean-field regime that we consider, and we explain that they cannot be overriden due to a lack of universality of the macroscopic particle dynamics:
\begin{enumerate}[---]
\item On the one hand, we focus on the case of well-separated particles; see~assumption~\eqref{eq:prepared-thmain} below. As pointed out by Mecherbet in~\cite{Mecherbet-20}, the presence of clusters of particles would completely change the~$O(\lambda)$ dynamics since they would affect Einstein's effective viscosity formula~\eqref{eq:Einstein}. Beyond the particular case of well-separated particles, no universal macroscopic description is therefore expected to hold in general as the dynamics depends on the structure of clusters.
\smallskip\item On the other hand, even for well-separated particles, we only aim to describe the macroscopic dynamics up to accuracy $O(\lambda^2)$: no mean-field description can hold beyond this accuracy as higher-order corrections to the effective viscosity~\eqref{eq:Einstein} involve statistical information on the arrangement of particles on the microscale.
This was recently highlighted by Höfer, Mecherbet, and Schubert in~\cite{Hofer-Mecherbet-Schubert-22}, and we further refer to Remark~\ref{rem:noMFL} below for a complementary perspective: the failure of mean-field theory due to the dependence of the macroscopic dynamics on the microscopic geometry is in fact a well-known issue that occurs whenever particle interactions have critical singularity, see e.g.~\cite{Oelschlager-90,Jabin-14}.
\end{enumerate}

\begin{rem}
Note that the present contribution focusses on non-Brownian inertialess particles.
The inclusion of Brownian rotary effects on particle orientations was recently discussed in a simplified setting in~\cite{Hofer-Leocata-Mecherbet-22}, while the general mean-field description of Brownian suspensions with spatial diffusion remains a delicate open problem and is postponed to future work. The inclusion of inertial effects of the particles is also an important open problem and we refer to~\cite{Hofer-Schubert-23} for recent partial results in that direction.
\end{rem}

\subsection{Hydrodynamic model for particle suspension}
We consider a system of $N$ non-Brownian inertialess rigid particles, denoted by $I_{\e,N}^{n}$ for $1\le n\le N$, of typical size $O(\e)$, possibly self-propelling, suspended in a Stokes flow.
We start by introducing the precise model that we are going to study: we describe the set of rigid particles, then turn to their possible self-propulsion, before describing the underlying viscous solvent and the particle dynamics.
We assume that the space dimension is $d>2$ (the case $d=2$ can be treated similarly, up to obvious modifications due to the logarithmic growth of the Stokeslet in the whole plane).

\subsubsection*{$\bullet$ Elongated rigid particles}
Let $I^\circ\subset B$ be an axisymmetric connected closed set, which we take to be centered at $\int_{I^\circ}x\,dx=0$ and to be of class $C^2$. We then consider $N$ particles that are disjoint rigid copies $\{I_{\e,N}^n\}_{1\le n\le N}$ of the rescaled set~$\e I^\circ$. More precisely, each particle~$I_{\e,N}^n$ is characterized by its center~\mbox{$X_{\e,N}^n\in\R^d$} and by the direction~$R_{\e,N}^n\in\Sp^{d-1}$ of its axis, in the sense of
\[I_{\e,N}^{n}\,:=\,I_\e(X_{\e,N}^{n},R_{\e,N}^{n})\,:=\,X_{\e,N}^{n}+\e I^\circ(R_{\e,N}^n),\]
where we have set $I^\circ(r):=\Theta(r)I^\circ$,
where for a direction $r\in\Sp^{d-1}$ we denote by $\Theta(r):\R^d\to\R^d$ the rotation that maps the axis of~$I^\circ$ to~$r$ (the sign is fixed by choosing $r\mapsto\Theta(r)$ to be continuous).
The set of all rigid particles is denoted by
\[\Ic_{\e,N}\,:=\,\textstyle\bigcup_{n=1}^NI_{\e,N}^n.\]
We also consider $\e$-neighborhoods $I_{\e,N}^{n;+}:=I_{\e,N}^n+\e B$ and we assume that they are disjoint,
\begin{equation}\label{eq:disjoint-as}
I_{\e,N}^{n;+}\cap I_{\e,N}^{m;+}\,=\,\varnothing,\qquad\text{for all $1\le n\ne m\le N$},
\end{equation}
which will be shown to be preserved along the dynamics in our regime of interest.
The particle volume fraction is then
\[\lambda\,:=\,|\Ic_{\e,N}|\,=\,N|\e I^\circ|\,=\,N\e^d|I^\circ|,\]
and we shall consider the macroscopic limit $N\uparrow\infty$, $\e\downarrow0$, in the dilute regime $\lambda\ll1$.

\subsubsection*{$\bullet$ Particle activity}
We consider particles that may be active and propel themselves in the fluid (e.g.\@ by consuming some underlying chemical energy, which is not included in the model for simplicity). By a balance of forces, self-propulsion must be described by a couple of forces of same intensity and opposite direction on each rigid particle and on the surrounding fluid:
\begin{enumerate}[---]
\item Each particle $I_{\e,N}^n$ is assumed to propel itself in the direction~$R_{\e,N}^n$ of its own axis.
\smallskip\item The force of each particle on the surrounding fluid is typically exerted via a flagellar bundle, but the detail of the propulsion mechanism is not included in the model for simplicity: as e.g.\@ in~\cite[Section~2.1]{Decoene-Martin-Maury-11} (see also~\cite{Bernou-Duerinckx-Gloria-22}), we assume that the force exerted by particle~$I_{\e,N}^n$ on the surrounding fluid can be effectively described by a force field
\[f_{\e,N}^n\,:=\,f_\e(\cdot-X_{\e,N}^n,R_{\e,N}^n)~:~\R^d\setminus\Ic_{\e,N}^n\to\R^d.\]
where the function $f_\e$ takes the form $f_\e(x,r):=\e^{-d}f(\frac x\e,r)$ for some bounded function~$f:\R^d\times\Sp^{d-1}\to\R^d$. We also assume that $f(\cdot,r)$ is axisymmetric around direction~$r$, just like $I^\circ(r)$, for all~$r\in\Sp^{d-1}$.
\end{enumerate}
The balance of propulsion forces then takes form of the following assumption,
\[R_{\e,N}^n+\int_{\R^d\setminus\Ic_{\e,N}^n}f_{\e,N}^n\,=\,0,\qquad\text{for all {$1\le n\le N$}}.\]
For notational convenience, we extend inside each particle
\begin{equation}\label{eq:extend-f-I}
f_{\e,N}^n|_{\Ic_{\e,N}^n}\,:=\,R_{\e,N}^n{|I_{\e,N}^n|^{-1}},
\end{equation}
or equivalently $f(\cdot,r)|_{I^\circ(r)}=r|I^\circ|^{-1}$, so the balance of forces reads
\begin{equation}\label{eq:local-balance}
\int_{\R^d}f(\cdot,r)\,=\,0,\qquad\text{for all $r$}.
\end{equation}
We further assume that $f(\cdot,r)$ is compactly supported, say in $I^\circ(r)+B$, for all $r$, meaning that each particle propels itself only by acting on the surrounding fluid at bounded distance.
By the separation assumption~\eqref{eq:disjoint-as}, we then note that
the force fields $\{f_{\e,N}^n\}_{1\le n\le N}$ have pairwise disjoint supports.

In most previous work, e.g.~\cite{Haines-Aranson-08,Haines-Aranson-09,Girodroux-Lavigne-22}, the action of a particle on the surrounding fluid was represented for simplicity by a point force, typically setting $f(x,r):=-\delta(x-\theta r)r$ for $x\notin I^\circ(r)$, for some parameter~\mbox{$\theta\in\R$} with $\theta r\notin I^\circ(r)$. This point-force model is a special case of ours (regularity issues for $f$ play no important role), and the cases~$\theta>0$ and~$\theta<0$ then correspond to so-called puller and pusher particles, respectively. We refer e.g.\@ to~\cite[Sections~2.1--2.2]{Saintillan-Shelley-13} for a review of other models for self-propulsion, such as squirmer models, which are different but which we believe could be treated analogously.

\subsubsection*{$\bullet$ Inertialess particle dynamics in viscous solvent}
Particles are suspended in a homogeneous viscous fluid, which we assume to be described by the steady Stokes equation with unit viscosity.
More precisely, given the set $\Ic_{\e,N}$ of particles at a given time, the fluid velocity~$u_{\e,N}$ and pressure~$p_{\e,N}$ satisfy the following Stokes equation in the fluid domain,
\begin{equation}\label{eq:Stokes-1}
-\triangle u_{\e,N}+\nabla p_{\e,N}=h+\kappa\sum_{n=1}^Nf_{\e,N}^n,\qquad\Div(u_{\e,N})=0,\qquad\text{in $\R^d\setminus\Ic_{\e,N}$},
\end{equation}
where $h$ stands for some internal force in the fluid domain and where $\kappa\ge0$ is the self-propulsion intensity. We assume for convenience that $h$ is compactly supported and Lipschitz continuous.
Next, we assume that the fluid flow satisfies no-slip conditions at particle boundaries, so we may implicitly extend the fluid velocity $u_{\e,N}$ inside the particles to coincide with the particle velocities. The rigidity of the particles then translates into a boundary condition,
\begin{equation}\label{eq:Stokes-2}
\D(u_{\e,N})=0,\qquad\text{in $\Ic_{\e,N}$},
\end{equation}
where $\D(u)=\frac12(\nabla u+(\nabla u)')$ stands for the symmetric gradient. Equivalently, this means for all $1\le n\le N$,
\begin{equation}\label{eq:Stokes-2bis}
u_{\e,N}=V_{\e,N}^n+\Omega_{\e,N}^n(x-x_{\e,N}^n),\qquad\text{in $I_{\e,N}^n$},
\end{equation}
for some translational velocity $V_{\e,N}^n\in\R^d$ and angular velocity tensor $\Omega_{\e,N}^n\in\R^{d\times d}_\Skew$. As we neglect the inertia of the particles, Newton's equations of motion reduce to the balance of forces and torques, which take form of additional boundary conditions,
\begin{eqnarray}
me+\kappa R_{\e,N}^n+\int_{\partial I_{\e,N}^n}\sigma(u_{\e,N},p_{\e,N})\nu&=&0,\nonumber\\
\int_{\partial I_{\e,N}^n}(x-X_{\e,N}^n)\times\sigma(u_{\e,N},p_{\e,N})\nu&=&0,\qquad\text{for all $1\le n\le N$},\label{eq:Stokes-3}
\end{eqnarray}
where $\sigma(u,p):=2\D(u)-p\Id$ is the Cauchy stress tensor and where $me\in\R^d$ stands for the buoyancy of the particles. (Henceforth, for $a,b\in\R^d$, we use the vectorial notation $a\times b\in\R^{d\times d}_\Skew$ with $(a\times b)_{ij}:=a_i b_j-a_j b_i$.)
We choose the buoyancy~$me$ and the self-propulsion intensity $\kappa$ as
\[me\,:=\, |I_{\e,N}^n|e\,=\,\tfrac\lambda Ne,\qquad\kappa\,:=\,\tfrac{\kappa_0}{\e}|I_{\e,N}^n|\,=\,\kappa_0\tfrac{\lambda}{\e N},\]
for some $e\in\R^d$ and $\kappa_0\ge0$ with
\begin{equation}\label{eq:e-kappa0}
|e|\,\le\,1,\qquad\kappa_0\,\le\,1.
\end{equation}
This choice corresponds to the scaling that leads to~$O(\lambda)$ mean forces in the macroscopic limit.
The case $e=0$ amounts to particles with neutral buoyancy, and the case~$\kappa_0=0$ to passive particles.
Note that our scaling for the buoyancy $me$ differs from previous work on the topic~\cite{Hofer-18,Mecherbet-19,Hofer-Schubert-21}, where it was rather chosen to create a~$O(1)$ mean force in the macroscopic limit: we are not able to consider such a stronger scaling in case of non-spherical particles due to singularity issues in the mean-field analysis of orientations.\footnote{In particular, while a drag force of order $O(\frac1\lambda\e^2)$ appeared in the Vlasov equation in~\cite{Hofer-18,Mecherbet-19,Hofer-Schubert-21} due to the buoyancy, this term would become~$O(\e^2)$ in our scaling and is neglected in this work.}

\medskip
Summing up, given the set $\Ic_{\e,N}$ of particles at a given time, the instantaneous fluid velocity $u_{\e,N}$ is obtained as the unique weak solution in~$\dot H^1(\R^d)^d$ of the Stokes problem~\mbox{\eqref{eq:Stokes-1}--\eqref{eq:Stokes-3}}, that is,
\begin{equation}\label{eq:main-Stokes}
\left\{\begin{array}{ll}
-\triangle u_{\e,N}+\nabla p_{\e,N}=h+\kappa_0\tfrac{\lambda}{\e N}\sum_{n=1}^Nf_{\e,N}^n,&\text{in $\R^d\setminus\Ic_{\e,N}^n$},\\
\Div(u_{\e,N})=0,&\text{in $\R^d\setminus\Ic_{\e,N}^n$},\\
\D(u_{\e,N})=0,&\text{in $\Ic_{\e,N}^n$},\\
\frac\lambda Ne+\kappa_0\frac{\lambda}{\e N} R_{\e,N}^n+\int_{\partial{I_{\e,N}^n}}\sigma(u_{\e,N},p_{\e,N})\nu=0,&\text{for all $1\le n\le N$},\\
\int_{\partial{I_{\e,N}^n}}(x-X_{\e,N}^n)\times\sigma(u_{\e,N},p_{\e,N})\nu=0,&\text{for all $1\le n\le N$}.
\end{array}\right.
\end{equation}
We recall that the weak formulation of this system takes on the following simple guise: for any test function~$v\in\dot H^1(\R^d)^d$ that is incompressible, i.e.\@ $\Div(v)=0$, and that is rigid inside particles, i.e.\@ $\D(v)=0$ in~$\Ic_{\e,N}$, we have
\begin{equation}\label{eq:main-Stokes-weak}
2\int_{\R^d\setminus\Ic_{\e,N}}\D(v):\D(u_{\e,N})=
\int_{\R^d}v\cdot \Big(h\mathds1_{\R^d\setminus\Ic_{\e,N}}+e\mathds1_{\Ic_{\e,N}}+\kappa_0\tfrac{\lambda}{\e N}\sum_{n=1}^Nf_{\e,N}^n\Big).
\end{equation}
Once the Stokes system~\eqref{eq:main-Stokes} is solved for the instantaneous fluid velocity $u_{\e,N}$, particle positions and orientations can be updated according to
\begin{equation}\label{eq:dynamics}
\partial_tX_{\e,N}^n=V_{\e,N}^n,\qquad\partial_tR_{\e,N}^n=\Omega_{\e,N}^nR_{\e,N}^n,\qquad\text{for all $1\le n\le N$},
\end{equation}
where $V_{\e,N}^n,\Omega_{\e,N}^n$ are given by~\eqref{eq:Stokes-2bis}, or alternatively
\begin{equation}\label{eq:VTheta}
V_{\e,N}^n\,:=\,\fint_{I_{\e,N}^n}u_{\e,N}~\in\R^d,\qquad\Omega_{\e,N}^n\,:=\,\fint_{I_{\e,N}^n}\nabla u_{\e,N}~\in\R^{d\times d}_\Skew.
\end{equation}
In this way, the particles follow the fluid flow and interact with one another via the flow disturbance that they generate. The resulting dynamics is reputedly complex in view of the multibody, long-range, and singular nature of hydrodynamic interactions.

\subsection{Semi-dilute mean-field description}
We aim to investigate the collective macroscopic behavior of the fluid velocity~$u_{\e,N}$ and of the particle empirical measures
\begin{eqnarray}
\nu_{\e,N}&:=&\sum_{n=1}^N\delta_{(X_{\e,N}^n,R_{\e,N}^n)}~\in~\Pc(\R^d\times\Sp^{d-1}),\label{eq:def-empirical}\\
\mu_{\e,N}&:=&\sum_{n=1}^N\delta_{X_{\e,N}^n}~\in~\Pc(\R^d).\nonumber
\end{eqnarray}
In the macroscopic limit $N\uparrow\infty$, $\e\downarrow0$, in the dilute regime $\lambda\ll1$, {provided that particles do not form clusters,} formal considerations lead to expect
\begin{equation}\label{eq:formal-exp}
(u_{\e,N},\mu_{\e,N})=(u_{\lambda,\e},\mu_{\lambda,\e})+O((\lambda+\e)^2),\qquad\nu_{\e,N}=\nu_{\lambda,\e}+O(\lambda+\e),
\end{equation}
where $(u_{\lambda,\e},\nu_{\lambda,\e},\mu_{\lambda,\e})$ solves the following coupled system on $\R^d$: the macroscopic fluid velocity~$u_{\lambda,\e}$ satisfies an effective Stokes equation,
\begin{equation}\label{eq:Doi0-1}
\hspace{-0.2cm}\left\{\begin{array}{l}
-\Div\big[2\big(1+\lambda\langle\Sigma^\circ\nu_{\lambda,\e}\rangle\big)\!\D(u_{\lambda,\e})\big]+\nabla p_{\lambda,\e}\\
\hspace{2cm}=(1-\lambda\mu_{\lambda,\e})h+\lambda\mu_{\lambda,\e} e+\kappa_0\lambda\beta_f\,\Div\big[\langle(r\otimes r-\tfrac1d\Id)\,\nu_{\lambda,\e}\rangle\big],\\
\Div(u_{\lambda,\e})=0,\\
\mu_{\lambda,\e}=\langle\nu_{\lambda,\e}\rangle,\\
\end{array}\right.
\end{equation}
which is coupled to a Vlasov equation for the mean-field distribution $\nu_{\lambda,\e}$ of particle positions and orientations,
\begin{equation}\label{eq:Doi0-2}
\hspace{-0.2cm}\left\{\begin{array}{l}
\partial_t\nu_{\lambda,\e}+\Div_x\big[\nu_{\lambda,\e} (u_{\lambda,\e}+\kappa_0\e \alpha_fr)\big]+\Div_r\big[\nu_{\lambda,\e}(\Omega^\circ\nabla u_{\lambda,\e})r\big]=0,\\
\nu_{\lambda,\e}|_{t=0}=\nu^\circ.
\end{array}\right.
\end{equation}
Here, we use the short-hand notation $\langle g\rangle(x):=\int_{\Sp^{d-1}}g(x,r)\,d\sigma(r)$ for angular averaging,
and the coefficient fields $r\mapsto\Sigma^\circ(r),\Omega^\circ(r)$ and the constants $\alpha_f,\beta_f$ are defined as follows:
\begin{enumerate}[---]
\item \emph{Passive effective viscosity:}\\
The coefficient $1+\lambda\langle\Sigma^\circ\nu_{\lambda,\e}\rangle$ in the effective Stokes equation~\eqref{eq:Doi0-1} corresponds to Einstein's formula for the dilute correction of the plain fluid viscosity due to the presence of rigid particles, see e.g.~\cite[Section~2]{DG-21b} or~\cite{Hillairet-Wu-20}: for all~$r\in\Sp^{d-1}$, the tensor $\Sigma^\circ(r)$ is the symmetric linear map on the set $\R^{d\times d}_{\Sym,0}$ of trace-free symmetric matrices, given by
\begin{equation}\label{eq:def-S(r)}
E:\Sigma^\circ(r)E\,:=\,|I^\circ|^{-1}\int_{\R^d}|\!\D(u_{r,E}^\circ)|^2,\qquad\text{for all $E\in\R^{d\times d}_{\Sym,0}$,}
\end{equation}
where $u_{r,E}^\circ$ is the unique decaying solution of the single-particle problem
\begin{equation}\label{eq:eqn-uRE0}
\left\{\begin{array}{ll}
-\triangle u^\circ_{r,E}+\nabla p^\circ_{r,E}=0,&\text{in $\R^d\setminus I^\circ(r)$},\\
\Div(u^\circ_{r,E})=0,&\text{in $\R^d\setminus I^\circ(r)$},\\
\D(u^\circ_{r,E}+Ex)=0,&\text{in $I^\circ(r)$},\\
\int_{\partial I^\circ(r)}\sigma(u^\circ_{r,E},p^\circ_{r,E})\nu=0,&\\
\int_{\partial I^\circ(r)}x\times\sigma(u^\circ_{r,E},p^\circ_{r,E})\nu=0,&
\end{array}\right.
\end{equation}
which describes the flow disturbance generated by a strain rate $E$ at a single rigid particle~$I^\circ(r)$ oriented in direction $r$. In other words, $\Sigma^\circ(r)E$ measures the reaction of a particle oriented in direction $r$ to a strain rate $E$, and it can equivalently be written as (half) the associated stresslet,
\begin{equation}
\Sigma^\circ(r)E\,=\,\tfrac12|I^\circ|^{-1}\int_{\partial I^\circ(r)} \sigma(u_{r,E}^\circ+Ex,p_{r,E}^\circ)\nu\otimes_s^\circ x,\label{eq:def-S(r)-bis}
\end{equation}
where $a\otimes_s^\circ b:=\frac12(a\otimes b+b\otimes a)-\frac1d\Id(a\cdot b)$ is the trace-free symmetric tensor product. Note that the map $r\mapsto \Sigma^\circ(r)$ is easily checked to be smooth.

\smallskip
\item \emph{Passive particle rotation:}\\
The local fluid deformation makes each particle rotate in a nontrivial way:
in the dilute regime, we naturally define $\Omega^\circ(r)H$ as the angular velocity of a single particle oriented in direction $r$ due to a local fluid deformation~$H$. More precisely, $\Omega^\circ(r)$ is the linear map $\R^{d\times d}_0\to\R^{d\times d}_\Skew$ given by
\begin{equation}\label{eq:defin-Omega0}
\Omega^\circ(r)H\,:=\,\fint_{I^\circ(r)}\nabla u^\circ_{r,H^\Sym}+H,\qquad\text{for all $H\in\R^{d\times d}_0$},
\end{equation}
where $u^\circ_{r,H^\Sym}$ is the solution of~\eqref{eq:eqn-uRE0} with $E=H^\Sym$ the symmetric part of $H$.
Note that the map $r\mapsto \Omega^\circ(r)$ is also easily checked to be smooth.

\smallskip
\item \emph{Active elastic stress:}\\
The particle self-propulsion generates an elastic stress in the Stokes equation (cf.~last right-hand side term in~\eqref{eq:Doi0-1}): in the dilute regime, it is naturally given in terms of the stresslet
\begin{equation}\label{eq:def-S(r)gamma-re}
\Sigma^\circ_f(r)\,:=\,\int_{\partial I^\circ(r)}\sigma(u^\circ_{r,f},p^\circ_{r,f})\nu\otimes_s^\circ x-\int_{\R^d}f(\cdot,r)\otimes^\circ x,
\end{equation}
where $a\otimes^\circ b:=a\otimes b-\frac1d\Id(a\cdot b)$ is the trace-free tensor product
and where~$u_{r,f}^\circ$ is the unique decaying solution of the single-particle problem
\begin{equation}\label{eq:eqn-defin-ugamma}
\left\{\begin{array}{ll}
-\triangle u^\circ_{r,f}+\nabla p^\circ_{r,f}=f(\cdot,r),&\text{in $\R^d\setminus I^\circ(r)$},\\
\Div(u^\circ_{r,f})=0,&\text{in $\R^d\setminus I^\circ(r)$},\\
\D(u^\circ_{r,f})=0,&\text{in $I^\circ(r)$},\\
r+\int_{\partial I^\circ(r)}\sigma(u^\circ_{r,f},p^\circ_{r,f})\nu=0,&\\
\int_{\partial I^\circ(r)}x\times\sigma(u^\circ_{r,f},p^\circ_{r,f})\nu=0,&
\end{array}\right.
\end{equation}
which describes the flow disturbance generated by the self-propulsion of a single particle oriented in direction $r$.
Since the inclusion $I^\circ(r)$ and the propulsion force $f(\cdot,r)$ are both axisymmetric around direction $r$, the single-particle stresslet~\eqref{eq:def-S(r)gamma-re} can be written by symmetry as
\begin{equation}\label{eq:def-ugamma-sym}
\Sigma_f^\circ(r)\,=\,\beta_f \,r\otimes^\circ r,\qquad\text{for some $\beta_f\in\R$.}
\end{equation}
The cases $\beta_f>0$ and $\beta_f<0$ correspond to so-called puller and pusher particles, respectively.
In the macroscopic limit, the active elastic stress is then given by the angular average
\[\langle\Sigma_f^\circ\nu_{\lambda,\e}\rangle\,=\,\beta_f\big\langle(r\otimes r-\tfrac1d\Id)\nu_{\lambda,\e}\big\rangle,\]
which appears as the last right-hand side term in the effective Stokes equation~\eqref{eq:Doi0-1}. It coincides with the expression predicted e.g.\@ in~\cite{Saintillan-Shelley-08,Haines-Aranson-09,Saintillan-10,Potomkin-Ryan-Berlyand-16,Degond-19} and first derived in~\cite{Girodroux-Lavigne-22,Bernou-Duerinckx-Gloria-22} in the equilibrium setting.

\smallskip
\item \emph{Swimming velocities:}\\
The particle self-propulsion also generates a drag force on the particles, leading to effective swimming velocities: in the dilute regime, we naturally define $V_f^\circ(r)$ as the drag velocity of a single particle oriented in direction $r$ due to its self-propulsion,
\begin{equation}\label{eq:def-act-drag-0}
V^\circ_f(r)\,:=\,\int_{I^\circ(r)}u^\circ_{r,f}~~\in~~\R^d.
\end{equation}
Since the propulsion force $f(\cdot,r)$ is axisymmetric around $r$, this swimming velocity can be written by symmetry as
\begin{equation}\label{eq:def-act-drag}
V^\circ_f(r)\,=\,\alpha_f\,r,\qquad\text{for some $\alpha_f>0$}.
\end{equation}
\end{enumerate}

The above kinetic model~\eqref{eq:Doi0-1}--\eqref{eq:Doi0-2} is a variant of the so-called Doi model, which was first introduced in~\cite{Jeffery-22,Hinch-Leal-72,Brenner-74} for passive suspensions $\kappa_0=0$ (see also~\cite{Doi-Edwards-78,Doi-81,Doi-Edwards-88}), and which was adapted to active suspensions $\kappa_0\ne0$ by Saintillan and Shelley~\cite{Saintillan-Shelley-08,Saintillan-10,Saintillan-18}. The difference with the standard form of the Doi model is twofold:
\begin{enumerate}[---]
\item Brownian effects are not considered in the present work.
The inclusion of Brownian rotary effects on particle orientations was recently discussed in a simplified setting in~\cite{Hofer-Leocata-Mecherbet-22}, but the general mean-field description of Brownian suspensions with spatial diffusion remains a delicate open problem and is postponed to future work.
\smallskip\item While we consider particles with a given axisymmetric shape, the Doi model rather corresponds to the limit of very elongated particles as computed by slender-body theory, e.g.~\cite{Brenner-74}. This amounts to replacing the effective coefficients $\Sigma^\circ,\Omega^\circ$ in~\eqref{eq:Doi0-1}--\eqref{eq:Doi0-2} by
\begin{eqnarray*}
\Sigma^\circ(r)&\leadsto& \alpha_1 (r\otimes^\circ r)\otimes (r\otimes^\circ r),\\
\Omega^\circ(r)(\nabla u)r&\leadsto&\big(\!\Id-r\otimes r\big)\big(\alpha_2 \D(u)+\tfrac12\nabla\times u\big)\,r,
\end{eqnarray*}
for some shape factors $\alpha_1,\alpha_2>0$.
\end{enumerate}
Regardless of these differences, just as the usual Doi model, the kinetic model~\eqref{eq:Doi0-1}--\eqref{eq:Doi0-2} describes the emergence of non-Newtonian effects due to mean-field particle orientations: particle orientations adapt collectively to the local fluid deformation and in turn modify the effective viscosity via Einstein's effective viscosity formula $1+\lambda\langle\Sigma^\circ\nu_{\lambda,\e}\rangle$.
We refer in particular to~\cite{Helzel-Otto-06,Otto-Tzavaras-08} for a study of some properties of the Doi model.

\subsection{Main result}
Standard arguments ensure the well-posedness of the particle dynamics~\eqref{eq:main-Stokes}--\eqref{eq:VTheta} until the first collision time, and Hillairet and Sabbagh~\cite{Hillairet-Sabbagh-23} have recently shown that collisions cannot occur in finite time.
As explained in Section~\ref{sec:overview}, since the presence of clusters of particles would affect the $O(\lambda)$ macroscopic dynamics, we shall require an additional control on the minimal interparticle distance,
\begin{equation*}
\dd_{\e,N}^{\min}\,:=\,\min_{1\le n\ne m\le N}|X_{\e,N}^n-X_{\e,N}^m|.
\end{equation*}
More precisely, we shall assume that particles are well-separated initially and do not form clusters, in the sense that $\dd_{\e,N}^{\min}(0)\gtrsim N^{-\frac1d}$ (see~\eqref{eq:prepared-thmain} below). As e.g.\@ in~\cite{Hofer-18,Mecherbet-19}, we shall check that under suitable assumptions this is propagated in finite time. (Note that the condition $\dd_{\e,N}^{\min}\gtrsim N^{-1/d}$ ensures in particular the validity of~\eqref{eq:disjoint-as} as we have $\e\ll N^{-1/d}$ in the dilute regime~$\lambda\ll1$.)

We turn to the justification of the kinetic model~\eqref{eq:Doi0-1}--\eqref{eq:Doi0-2} in the macroscopic limit.
Since the Wasserstein distance between empirical measures and their mean-field approximation cannot be smaller than the typical interparticle distance $O(N^{-1/d})$, the kinetic model can be derived at best with accuracy $O((\lambda+\e)^2+N^{-1/d})$. Therefore, as swimming velocities in~\eqref{eq:Doi0-1}--\eqref{eq:Doi0-2} are of order $O(\kappa_0\e)$ and as we have $\e\ll N^{-1/d}$ in the dilute regime, we cannot capture their effect in the limit.\footnote{This would motivate to rather consider a different regime with larger self-propulsion intensity $\kappa_0\gg1$. However, we do not manage to control interparticle distances in that case as the Lipschitz estimate of Proposition~\ref{prop:Lip-est} would only hold in the form $\|u_{\e,N}\|_{W^{1,\infty}(\R^d)}\le C_h\kappa_0$, cf.~\eqref{eq:Lipestim-pr}.}
We are thus led to the following simplified model, where swimming velocities are neglected,
\begin{equation}\label{eq:Doi0}
\hspace{-0.2cm}\left\{\begin{array}{l}
-\Div\big[2\big(1+\lambda\langle\Sigma^\circ\nu_{\lambda}\rangle\big)\!\D(u_{\lambda})\big]+\nabla p_{\lambda}\\
\hspace{2cm}=(1-\lambda\mu_{\lambda})h+\lambda\mu_{\lambda} e+\kappa_0\lambda\beta_f\,\Div\big[\langle(r\otimes r-\tfrac1d\Id)\nu_{\lambda}\rangle\big],\\
\Div(u_{\lambda})=0,\\
\mu_{\lambda}=\langle\nu_{\lambda}\rangle,\\
\partial_t\nu_{\lambda}+\Div_x(\nu_{\lambda} u_{\lambda})+\Div_r(\nu_{\lambda}(\Omega^\circ\nabla u_{\lambda})r)=0,\\
\nu_{\lambda}|_{t=0}=\nu^\circ.
\end{array}\right.
\end{equation}
We start by stating the (perturbative) well-posedness of this system in the smooth class. The proof is standard and is included in Section~\ref{sec:well-posed-Doi} for completeness.

\begin{prop}[Well-posedness of macroscopic model]\label{prop:well-posed-Doi}$ $
Given $T<\infty$, given~\mbox{$\gamma>1$} non-integer, given an initial condition $\nu^\circ\in \Pc\cap W^{1,1}\cap W^{\gamma,\infty}(\R^d\times\Sp^{d-1})$, and given $0<\lambda\ll1$ small enough (depending on $T,h,\gamma,\nu^\circ$), there is a unique solution $(\nu_\lambda,u_\lambda)$ of~\eqref{eq:Doi0} up to time~$T$ with $\nu_\lambda\in \Ld^\infty([0,T];\Pc\cap W^{1,1}\cap W^{\gamma,\infty}(\R^d\times\Sp^{d-1}))$ and $u_\lambda\in \Ld^\infty([0,T];W^{\gamma+1,\infty}(\R^d)^d)$.
\end{prop}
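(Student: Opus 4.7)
My plan is a Picard iteration exploiting the smallness of $\lambda$. Fix $T<\infty$ and a radius $M>0$ to be chosen later, and let $X_{T,M}$ denote the closed ball of radius $M$ in $\Ld^\infty([0,T];\Pc\cap W^{1,1}\cap W^{\gamma,\infty}(\R^d\times\Sp^{d-1}))$. Given a candidate $\tilde\nu\in X_{T,M}$, I first solve the stationary effective Stokes problem at each time~$t$ with viscosity $1+\lambda\langle\Sigma^\circ\tilde\nu(t,\cdot)\rangle$ and source involving $\tilde\mu:=\langle\tilde\nu\rangle$. Since $\Sigma^\circ$ is smooth and $\tilde\nu\in W^{\gamma,\infty}$, the viscosity coefficient belongs to the H\"older--Zygmund class $C^{\gamma}$, and for $\lambda\le\lambda_1(M)$ small enough it stays in $[\tfrac12,\tfrac32]$, ensuring uniform ellipticity. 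Classical Schauder regularity for the variable-coefficient Stokes system then yields a unique solution $u_\lambda\in\Ld^\infty([0,T];W^{\gamma+1,\infty}(\R^d)^d)$, with a bound of the form $\|u_\lambda(t)\|_{W^{\gamma+1,\infty}}\le C(1+\lambda M)$ uniform in $t$, where $C$ depends on $h,\gamma,M,T$.

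Next, given such $u_\lambda$, the Vlasov equation in~\eqref{eq:Doi0} reads as a transport equation on $\R^d\times\Sp^{d-1}$ with velocity field $b:=(u_\lambda,(\Omega^\circ\nabla u_\lambda)r)$, which is tangent to the sphere in the $r$-factor thanks to $\Omega^\circ(r)H\in\R^{d\times d}_\Skew$. Since $\Omega^\circ$ is smooth and $u_\lambda\in W^{\gamma+1,\infty}$, the field $b$ belongs to $W^{\gamma,\infty}$. The method of characteristics produces a unique solution $\nu_\lambda$, and the probability, $W^{1,1}$, and $W^{\gamma,\infty}$ properties propagate via a standard Gr\"onwall argument on the flow and its Jacobian:
\[\|\nu_\lambda(t)\|_{W^{\gamma,\infty}}\,\le\,\|\nu^\circ\|_{W^{\gamma,\infty}}\exp\!\big(Ct(1+\lambda M)\big),\qquad t\in[0,T].\]
Combining these estimates, one first chooses $M$ large in terms of $\|\nu^\circ\|_{W^{\gamma,\infty}}$ and $T$, and then $\lambda_0\le\lambda_1(M)$ small enough (depending on $T,h,\gamma,\nu^\circ$), so that for all $\lambda\le\lambda_0$ the map $\Phi:\tilde\nu\mapsto\nu_\lambda$ sends $X_{T,M}$ into itself.

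To conclude, I show that $\Phi$ is a contraction in a weaker metric, e.g.\@ $\Ld^\infty([0,T];W^{-1,1})$ or equivalently the Wasserstein-$1$ distance on $\R^d\times\Sp^{d-1}$. This follows from two stability estimates that are linear in $\lambda$: first, the variable-coefficient Stokes system depends Lipschitz-continuously on its coefficient through the factor $\lambda\langle\Sigma^\circ(\tilde\nu_1-\tilde\nu_2)\rangle$, so that $\|u_\lambda^1-u_\lambda^2\|_{W^{1,\infty}}\lesssim\lambda\|\tilde\nu_1-\tilde\nu_2\|_{W^{-1,1}}$; second, the characteristic flows depend Lipschitz-continuously on the drift in a standard way, yielding $\|\nu_\lambda^1-\nu_\lambda^2\|_{W^{-1,1}}\lesssim T\|u_\lambda^1-u_\lambda^2\|_{W^{1,\infty}}$ via duality with $W^{1,\infty}$ test functions. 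The Banach fixed-point theorem in this weaker metric then gives existence of a unique fixed point of $\Phi$ in $X_{T,M}$, and uniqueness in the full class of the proposition follows from the same contraction estimate applied to any two solutions.

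The main technical input is the high-regularity Schauder-type estimate for the variable-coefficient Stokes system, for which the non-integer choice of $\gamma$ is made precisely to stay safely inside the H\"older--Zygmund scale and avoid borderline endpoint losses. Everything else is standard kinetic and transport theory; I expect no other significant obstacles.
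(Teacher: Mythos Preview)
Your overall strategy is sound and it is genuinely different from the paper's. The paper does not solve a variable-coefficient Stokes problem at all: it moves the term $\lambda\langle\Sigma^\circ\nu\rangle\!\D(u)$ to the right-hand side and iterates on both unknowns via the \emph{constant-coefficient} scheme
\[
-\triangle u_{n+1}+\nabla p_{n+1}=(1-\lambda\mu_n)h+\lambda\mu_ne+\lambda\Div\big[2\langle\Sigma^\circ\nu_n\rangle\!\D(u_n)+\kappa_0\langle\Sigma_f^\circ\nu_n\rangle\big],
\]
then gets uniform bounds for small $\lambda$, extracts a limit via the Aubin lemma, and finally proves uniqueness by a stability estimate in $\Ld^1\cap W^{\gamma-1,\infty}$. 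Your route via variable-coefficient Schauder theory and a Banach fixed point is a legitimate alternative and, modulo the point below, would also work; the paper's choice simply avoids having to invoke variable-coefficient Stokes estimates.

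There is, however, a real gap in your contraction step. You claim
\[
\|u_\lambda^1-u_\lambda^2\|_{W^{1,\infty}}\,\lesssim\,\lambda\,\|\tilde\nu_1-\tilde\nu_2\|_{W^{-1,1}},
\]
but this cannot hold. The angular drift $(\Omega^\circ\nabla u_\lambda)r$ forces you to control $\nabla(u_\lambda^1-u_\lambda^2)$ in $\Ld^\infty$, and after subtracting the two Stokes equations the relevant source terms are of the form $\Div\big[\langle\Sigma^\circ(\tilde\nu_1-\tilde\nu_2)\rangle\!\D(u_\lambda^2)\big]$ and $\kappa_0\beta_f\Div\big[\langle(r\otimes r-\tfrac1d\Id)(\tilde\nu_1-\tilde\nu_2)\rangle\big]$. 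Controlling $\nabla u$ in $\Ld^\infty$ from such divergence-form data amounts to applying the critical singular integral $\nabla^2\Gc$ to a function that you only control in $W^{-1,1}$ (equivalently, to an $\Ld^1$-type quantity), and Calder\'on--Zygmund kernels do not map $\Ld^1\to\Ld^\infty$. Even exploiting the a~priori $W^{\gamma,\infty}$ bound on $\tilde\nu_i\in X_{T,M}$, one obtains at best an interpolation estimate or a logarithmic loss of the type $\lambda\,\delta\log(2+\tfrac1\delta)$ with $\delta=\|\tilde\nu_1-\tilde\nu_2\|_{W^{-1,1}}$ (compare the treatment of the analogous term $T_3$ in the proof of Proposition~\ref{prop:MFL}), neither of which yields a Banach contraction.

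The fix is simple: run the contraction in a H\"older-type topology rather than in $W^{-1,1}$. For instance, measuring $\tilde\nu_1-\tilde\nu_2$ in $\Ld^1\cap W^{\gamma-1,\infty}$ and $u_\lambda^1-u_\lambda^2$ in $W^{\gamma,\infty}$ makes the Schauder estimate linear in the difference and closes the loop; this is exactly the pair of norms the paper uses in its uniqueness argument~\eqref{eq:apriori-u-diff}. With that modification your fixed-point scheme goes through.
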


Our main result concerns the derivation of this Doi-type kinetic model~\eqref{eq:Doi0} from the particle dynamics~\eqref{eq:main-Stokes}--\eqref{eq:VTheta}, thus providing a rigorous version of~\eqref{eq:formal-exp}.
To our knowledge, this is the first time that non-Newtonian macroscopic models are rigorously derived from a hydrodynamic description of non-Brownian particle suspensions.
It constitutes both a generalization of~\cite{Hofer-Schubert-21} to non-spherical and possibly active particles, and a generalization of~\cite{DG-22a,Girodroux-Lavigne-22} to the particle dynamics.
 
\begin{theor}[Semi-dilute mean-field approximation]\label{th:main}
Let $1\le p\le\infty$.
Assume that initial particle positions satisfy, for some $C_0\ge1$,
\begin{equation}\label{eq:prepared-thmain}
\dd_{\e,N}^{\min}(0)\,\ge\,\tfrac1{C_0}N^{-\frac1d},\qquad\max_{1\le n\le N}|X_{\e,N}^{n;\circ}|\,\le\,C_0,
\end{equation}
and assume that the empirical measure~\eqref{eq:def-empirical} is initially close in the $p$-Wasserstein metric to some compactly supported density $\nu^\circ\in \Pc\cap W^{\gamma,\infty}(\R^d\times\Sp^{d-1})$ with $\gamma>1$,
\[W_p(\nu_{\e,N}^\circ,\nu^\circ)\,\le\,C_0\big(\lambda\log N+N^{-\frac1d}\big).\]
Let $\mu^\circ:=\langle\nu^\circ\rangle$ be the corresponding spatial density.
Given $T<\infty$, assume that $\lambda\log N\ll1$ is small enough (depending on $T,h,\nu^\circ$), and let $(\nu_\lambda,\mu_\lambda,u_\lambda)$ be the unique solution of~\eqref{eq:Doi0} up to time~$T$ as given by Proposition~\ref{prop:well-posed-Doi}.
Then we have for all~$t\in[0, T]$,
\begin{eqnarray*}
W_p(\mu_{\e,N}^t,\mu^t_\lambda)&\lesssim&\lambda^2|\!\log\lambda|\log N+N^{-\frac1d}+W_p(\mu_{\e,N}^\circ,\mu^\circ),\\
\|u_{\e,N}^t- u_\lambda^t\|_{(\Ld^1+\Ld^\infty)(\R^d)}
&\lesssim&\lambda^2|\!\log\lambda|\log N
+N^{-\frac1d},\\
W_p(\nu_{\e,N}^t,\nu^t_\lambda)&\lesssim&\lambda\log N+N^{-\frac1d},
\end{eqnarray*}
up to multiplicative constants depending on $C_0,t,h,p,\nu^\circ$.
\end{theor}

We emphasize again that the mean-field description cannot be pursued beyond the accuracy~$O(\lambda^2)$: the next-order $O(\lambda^2)$ correction to the approximate effective viscosity $1+\lambda\langle\Sigma^\circ\nu_{\lambda,\e}\rangle$ in~\eqref{eq:Doi0} would involve the statistical distribution of pairs of particles on the microscale, and such geometric information is beyond the scope of mean-field theory.

\medskip
\subsection*{Notation}
\begin{enumerate}[$\bullet$]
\item In order to control the particle dynamics, on top of the minimal interparticle distance,
\begin{equation}\label{eq:def-min}
\dd_{\e,N}^{\min}\,:=\,\min_{1\le n\ne m\le N}|X_{\e,N}^n-X_{\e,N}^m|,
\end{equation}
we shall also keep track of the following quantities, as in~\cite{Hofer-18,Hofer-Schubert-21}, for $\sigma\in[0,d]$,
\begin{equation}\label{eq:def-alpha}
\alpha_{\e,N}^\sigma\,:=\,\max_{1\le n\le N}\tfrac1N\sum_{m:m\ne n}^N|X_{\e,N}^n-X_{\e,N}^m|^{\sigma-d}.
\end{equation}
\item We denote by $\R^{d\times d}_0$, $\R^{d\times d}_{\Sym,0}$, and $\R^{d\times d}_{\Skew}$ the set of trace-free matrices, trace-free symmetric matrices, and skew-symmetric matrices, respectively.
\item For $a,b\in\R^d$, we denote by $a\otimes_sb:=\frac12(a\otimes b+b\otimes a)$ the symmetric tensor product, by $a\otimes^\circ b:=a\otimes b-\frac1d\Id(a\cdot b)$ the trace-free tensor product, and by $a\otimes_s^\circ b:=a\otimes_s b-\frac1d\Id(a\cdot b)$ the trace-free symmetric tensor product. We use the vectorial notation $a\times b\in\R^{d\times d}_\Skew$ with $(a\times b)_{ij}:=a_i b_j-a_j b_i$. For matrices $A,B$, we let $A:B:=A_{ij}B_{ij}$, systematically using Einstein's summation convention on repeated indices. We also use the notation~$A^\Sym$ for the symmetric part of $A$, that is, $(A^\Sym)_{ij}:=\frac12(A_{ij}+A_{ji})$.
\item For a vector field $u$ and a matrix field $T$, we set $(\nabla u)_{ij}:=\nabla_j u_i$, $\D( u):=(\nabla u)^{\Sym}$, $(\nabla T)_{ijk}:=\nabla_k T_{ij}$, $\Div(T)_i:=\nabla_j T_{ij}$.
For a pressure field $p$, we denote the Cauchy stress tensor by $\sigma(u,p):=2\!\D(u)-p\Id$.
We define $(T\ast u)_i:=T_{ij}\ast u_j$ as the convolution product of a vector field with a matrix kernel, and similarly for higher-order tensors.
\item We use the short-hand notation $g\,\hat\ast\,\mu(x):=\int_{\R^d\setminus\{x\}}g(x-y)\,d\mu(y)$ for the diagonal-free convolution, which is equivalent to standard convolution if the measure $\mu$ is continuous.
\item We let $\langle g\rangle(x):=\int_{\Sp^{d-1}}g(x,r)\,d\sigma(r)$ be the angular averaging of a function $g$ on~\mbox{$\R^d\times\Sp^{d-1}$}.
\item We denote by $C\ge1$ any constant that only depends on the dimension $d$, on the~$C^2$ property of $I^\circ$, and on the propulsion force $f$. We use the notation $\lesssim$ (resp.~$\gtrsim$) for $\le C\times$ (resp.~$\ge\frac1C\times$) up to such a multiplicative constant $C$. We write $\ll$ (resp.~$\gg$) for $\le C\times$ (resp.~$\ge C\times$) up to a sufficiently large multiplicative constant $C$. We add subscripts to indicate dependence on other parameters.
\item The ball centered at $x$ of radius $r$ in $\R^d$ is denoted by $B_r(x)$, and we set $B:=B_1(0)$.
\end{enumerate}

\medskip
\section{Preliminary on Stokes analysis}
In this section, we recall a series of preliminary results for the analysis of the steady Stokes equation with rigid inclusions.
We start with the following standard lemma, showing how rigidity constraints can be viewed as creating source terms concentrated at particle boundaries in the Stokes equation; a short proof is included for convenience.

\begin{lem}\label{lem:eqn-pseudo}
Given $h\in\Ld^{2d/(d+2)}(\R^d)^d$, if $u\in\dot H^1(\R^d)^d$ satisfies
\begin{equation}\label{eq:Stokes-rewr}
\left\{\begin{array}{ll}
-\triangle u+\nabla p=h,&\text{in $\R^d\setminus\Ic_{\e,N}$},\\
\Div(u)=0,&\text{in $\R^d\setminus\Ic_{\e,N}$},\\
\D(u)=0,&\text{in $\Ic_{\e,N}$},\end{array}\right.
\end{equation}
then the following relation holds in the weak sense in the whole space $\R^d$,
\[-\triangle u+\nabla(p\mathds1_{\R^d\setminus\Ic_{\e,N}})\,=\,h\mathds1_{\R^d\setminus\Ic_{\e,N}}-\sum_{n=1}^N\delta_{\partial I_{\e,N}^n}\sigma(u,p)\nu.\qedhere\]
\end{lem}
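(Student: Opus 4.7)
\medskip
\noindent\textbf{Proof plan for Lemma~\ref{lem:eqn-pseudo}.}
The plan is to test the claimed identity against an arbitrary $\varphi\in C_c^\infty(\R^d)^d$, split the integrals over $\R^d\setminus\Ic_{\e,N}$ and $\Ic_{\e,N}$, integrate by parts on each region, and then reconstruct $\sigma(u,p)\nu$ from the collected boundary contributions. The identity to establish in weak form reads
\[
\int_{\R^d}\nabla u:\nabla\varphi \,-\, \int_{\R^d\setminus\Ic_{\e,N}} p\,\Div\varphi \;=\; \int_{\R^d\setminus\Ic_{\e,N}} h\cdot\varphi \,-\, \sum_{n=1}^N\int_{\partial I_{\e,N}^n}\varphi\cdot\sigma(u,p)\nu.
\]

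First, I would split $\int_{\R^d}\nabla u:\nabla\varphi$. Inside each $I_{\e,N}^n$, the rigidity $\D(u)=0$ forces $u$ to be affine, so $\triangle u=0$ pointwise there and $\nabla u|_{\mathrm{int}}$ is constant and skew-symmetric; integration by parts reduces the bulk term to a single boundary integral $\int_{\partial I_{\e,N}^n}\varphi\cdot(\nabla u|_{\mathrm{int}})\nu$. In the exterior, classical integration by parts and the Stokes equation $-\triangle u+\nabla p=h$ give
\[
\int_{\R^d\setminus\Ic_{\e,N}}\nabla u:\nabla\varphi \;=\; \int_{\R^d\setminus\Ic_{\e,N}}(h-\nabla p)\cdot\varphi \,-\, \sum_{n}\int_{\partial I_{\e,N}^n}\varphi\cdot(\nabla u|_{\mathrm{ext}})\nu,
\]
while treating the pressure term as $\int_{\R^d\setminus\Ic_{\e,N}} p\,\Div\varphi=-\int_{\R^d\setminus\Ic_{\e,N}}\nabla p\cdot\varphi-\sum_n\int_{\partial I_{\e,N}^n}p\,\varphi\cdot\nu$ produces exactly the $-p\nu$ part of the stress tensor on each boundary.

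After combining, the only thing left to check is the identification
\[
\bigl((\nabla u|_{\mathrm{ext}})-(\nabla u|_{\mathrm{int}})\bigr)\nu \;=\; 2\D(u)\nu \qquad\text{on } \partial I_{\e,N}^n,
\]
where the right-hand side uses the exterior trace. Since $u\in\dot H^1(\R^d)^d$, the trace is continuous across $\partial I_{\e,N}^n$, so tangential derivatives coincide and the jump must take the form $[\nabla u]=a\otimes\nu$ for some $a$ on $\partial I_{\e,N}^n$; the incompressibility condition $\Div u=0$ (valid on both sides, the interior because $\nabla u|_{\mathrm{int}}$ is skew) then yields $\Tr[\nabla u]=a\cdot\nu=0$, so $a$ is tangential. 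Combining $\D(u|_{\mathrm{int}})=0$ with the jump formula gives $2\D(u|_{\mathrm{ext}})=a\otimes\nu+\nu\otimes a$ on the boundary, hence $2\D(u|_{\mathrm{ext}})\nu=a|\nu|^2+\nu(a\cdot\nu)=a=[\nabla u]\nu$.

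The main (and essentially only) delicate point is this last identification of the boundary jump with $2\D(u)\nu$: taken at face value, the raw jump of $\nabla u$ need not equal twice its symmetric part, and it is only through the combination of \emph{rigidity} inside ($\D(u|_{\mathrm{int}})=0$) and \emph{incompressibility} across the interface (forcing the jump vector $a$ to be tangential) that the two coincide after contraction with $\nu$. Once this identity is in place, the proof is concluded by assembling the pieces and recognising $\sigma(u,p)\nu=2\D(u)\nu-p\nu$.
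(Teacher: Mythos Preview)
Your argument is correct, but it takes a different route from the paper's. The paper never splits the domain or analyses the jump of $\nabla u$ across $\partial I_{\e,N}^n$. Instead, it first uses the global incompressibility of $u$ (which holds in all of $\R^d$ since rigid motions are divergence-free) to obtain, via one integration by parts in the whole space,
\[
\int_{\R^d}\nabla v:\nabla u \;=\; 2\int_{\R^d}\nabla v:\D(u)\;=\;2\int_{\R^d\setminus\Ic_{\e,N}}\nabla v:\D(u),
\]
the second equality coming from $\D(u)|_{\Ic_{\e,N}}=0$. Writing $2\D(u)=\sigma(u,p)+p\Id$ and integrating by parts once in the exterior then produces the boundary term $\sigma(u,p)\nu$ directly. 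This bypasses your jump identification $[\nabla u]\nu=2\D(u|_{\mathrm{ext}})\nu$ entirely: the symmetrisation happens at the level of the bulk integral rather than at the boundary. Your approach is more hands-on and makes the mechanism (rigidity kills the interior symmetric gradient, incompressibility forces the jump vector to be tangential) more visible, but it implicitly relies on enough regularity to take one-sided traces of $\nabla u$ on $\partial I_{\e,N}^n$ (available here by Stokes regularity outside and the affine form inside). The paper's argument needs only $u\in\dot H^1(\R^d)^d$ and is correspondingly shorter.
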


\begin{proof}
For a test function $v\in \dot H^1(\R^d)^d$, the incompressibility of $u$ and the rigidity constraint in $\Ic_{\e,N}$ yield
\[\int_{\R^d}\nabla v:\nabla u\,=\,2\int_{\R^d}\nabla v:\D(u)\,=\,2\int_{\R^d\setminus\Ic_{\e,N}}\nabla v:\D(u),\]
or equivalently, inserting the definition of the Cauchy stress tensor $\sigma(u,p)=2\D(u)-p\Id$,
\[\int_{\R^d}\nabla v:\nabla u\,=\,\int_{\R^d\setminus\Ic_{\e,N}}\nabla v:p\Id+\int_{\R^d\setminus\Ic_{\e,N}}\nabla v:\sigma(u,p).\]
Integrating by parts in the last right-hand side term and noting that the Stokes equation in~\eqref{eq:Stokes-rewr} yields $-\Div(\sigma(u,p))=h$ in $\R^d\setminus\Ic_{\e,N}$, we deduce
\[\int_{\R^d}\nabla v:\nabla u
\,=\,\int_{\R^d\setminus\Ic_{\e,N}}\nabla v:p\Id+\int_{\R^d\setminus\Ic_{\e,N}}v\cdot h-\sum_{n=1}^N\int_{\partial I^n_{\e,N}}v\cdot\sigma(u,p)\nu,\]
which is the conclusion.
\end{proof}

The following basic trace estimate is used repeatedly to control force terms concentrated at particle boundaries; a short proof is also included for convenience.

\begin{lem}[Trace estimate]\label{lem:trace}
Given $1\le n\le N$, if $(u,p)\in H^1(I_{\e,N}^{n;+})^d\times\Ld^2(I_{\e,N}^{n;+})$ satisfies
\begin{equation}\label{eq:lem-trace-eqn}
\left\{\begin{array}{ll}
-\triangle u+\nabla p=0,&\text{in $I_{\e,N}^{n;+}\setminus I_{\e,N}^n$},\\
\Div(u)=0,&\text{in $I_{\e,N}^{n;+}\setminus I_{\e,N}^n$},\\
\D(u)=0,&\text{in $I_{\e,N}^n$},\\
\int_{\partial I_{\e,N}^n}\sigma(u,p)\nu=0,\\
\int_{\partial I_{\e,N}^n}(x-X_{\e,N}^n)\times\sigma(u,p)\nu=0,\\
\end{array}\right.
\end{equation}
then we have for all $F\in H^1_\loc(\R^d)^d$ with $\Div(F)=0$,
\[\Big|\int_{\partial I_{\e,N}^n}F\cdot\sigma(u,p)\nu\Big|\,\lesssim\,\|\!\D(F)\|_{\Ld^2(I_{\e,N}^{n;+})}\|\!\D(u)\|_{\Ld^2(I_{\e,N}^{n;+})},\]
where we recall $I_{\e,N}^{n;+}=I_{\e,N}^n+\e B$.
\end{lem}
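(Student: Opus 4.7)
The plan is to convert the boundary integral into a volume integral of $\D(u):\D(v)$ on the annulus $I_{\e,N}^{n;+}\setminus I_{\e,N}^n$, where $v$ is a carefully chosen divergence-free $H^1$ test function supported in $I_{\e,N}^{n;+}$ whose trace on $\partial I_{\e,N}^n$ matches that of $F$ up to a rigid motion. The weak formulation of the Stokes equation then yields a bilinear pairing that Cauchy--Schwarz controls.

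I would begin by exploiting the zero-force and zero-torque identities in~\eqref{eq:lem-trace-eqn} to observe that $\int_{\partial I_{\e,N}^n}F\cdot\sigma(u,p)\nu$ is unchanged if $F$ is replaced by $G:=F-V-\Omega_s(x-X_{\e,N}^n)$ for any $V\in\R^d$ and any skew $\Omega_s\in\R^{d\times d}_{\Skew}$: the translational part vanishes by force balance, while the rotational part vanishes by torque balance combined with the skew-symmetry of $\Omega_s$. I would then select the ``best'' rigid motion, i.e., choose $V,\Omega_s$ so that Korn--Poincar\'e on the reference set $I^{\circ;+}$, rescaled to size $\e$, yields
\[\|G\|_{\Ld^2(I_{\e,N}^{n;+})}\,\lesssim\,\e\,\|\D(F)\|_{\Ld^2(I_{\e,N}^{n;+})}.\]

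Next, I would build $v$ as follows. Pick a cut-off $\chi\in C^\infty_c(I_{\e,N}^{n;+})$ with $\chi\equiv1$ on a neighborhood of $I_{\e,N}^n$ and $|\nabla\chi|\lesssim\e^{-1}$, and set $\tilde v:=\chi G$. Then $\Div\tilde v=\nabla\chi\cdot G$, whose $\Ld^2$ norm is bounded by the previous estimate by $\e^{-1}\|G\|_{\Ld^2}\lesssim\|\D(F)\|_{\Ld^2(I_{\e,N}^{n;+})}$; moreover $\int_{I_{\e,N}^{n;+}\setminus I_{\e,N}^n}\Div\tilde v=0$ since $\Div F=0$ and $\Div(V+\Omega_s(\cdot-X_{\e,N}^n))=0$ (as $\Tr\Omega_s=0$). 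Applying Bogovskii's operator on the annulus, with an $\e$-uniform constant obtained by scaling to the reference geometry, I would produce a corrector $w\in H^1_0(I_{\e,N}^{n;+}\setminus I_{\e,N}^n)$ (extended by $0$) satisfying $\Div w=\Div\tilde v$ and $\|\nabla w\|_{\Ld^2}\lesssim\|\D(F)\|_{\Ld^2(I_{\e,N}^{n;+})}$. Then $v:=\tilde v-w$ is an incompressible $H^1_0$ field on $I_{\e,N}^{n;+}$, its trace on $\partial I_{\e,N}^n$ coincides with that of $G$, and $\|\D(v)\|_{\Ld^2(I_{\e,N}^{n;+})}\lesssim\|\D(F)\|_{\Ld^2(I_{\e,N}^{n;+})}$.

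Finally, I would test the Stokes equation $-\Div\sigma(u,p)=0$ on $I_{\e,N}^{n;+}\setminus I_{\e,N}^n$ against $v$ and integrate by parts. Using $v=0$ on $\partial I_{\e,N}^{n;+}$, $\Div v=0$ (which kills the pressure contribution), and the trace identification $v|_{\partial I_{\e,N}^n}=G|_{\partial I_{\e,N}^n}$, this yields
\[\int_{\partial I_{\e,N}^n}F\cdot\sigma(u,p)\nu\,=\,\int_{\partial I_{\e,N}^n}G\cdot\sigma(u,p)\nu\,=\,-2\int_{I_{\e,N}^{n;+}\setminus I_{\e,N}^n}\D(v):\D(u),\]
and Cauchy--Schwarz together with the bound on $\|\D(v)\|_{\Ld^2}$ produces the claimed estimate. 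The only delicate point is the $\e$-uniformity of both the Korn--Poincar\'e and the Bogovskii constants on the annulus, which is obtained by the standard scaling argument to the fixed reference domain $I^{\circ;+}\setminus I^\circ(r)$, whose Lipschitz (in fact $C^2$) regularity comes from the assumption on $I^\circ$.
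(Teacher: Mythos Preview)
Your proof is correct and follows the same overall skeleton as the paper's: subtract a rigid motion from $F$ using the force and torque balances, localize by a cut-off, integrate by parts using $\Div\sigma(u,p)=0$ in the annulus, and close with Cauchy--Schwarz and Korn. The one genuine difference is in how the pressure is handled. You invest the Bogovskii operator \emph{on the test-function side}, correcting $\chi G$ to be divergence-free so that the pressure drops out of the volume integral immediately and only $\D(u)$ appears. The paper instead leaves the test function $(F-\fint F)\chi_\e$ with nonzero divergence, keeps the full stress $\sigma(u,p)$ in the volume integral, and then spends Bogovskii \emph{on the $(u,p)$ side} via a local pressure estimate $\|p-\fint p\|_{\Ld^2}\lesssim\|\D(u)\|_{\Ld^2}$ on the annulus (after noting that $\Div F=0$ allows subtracting a constant from $p$). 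Your route is slightly more self-contained since it never needs a separate pressure lemma; the paper's route is a bit more modular since the pressure estimate is an independent tool reused elsewhere. Both rely on the same $\e$-uniformity by scaling to the reference annulus, which you correctly flag.
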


\begin{proof}
The condition $\int_{\partial I_{\e,N}^n}\sigma(u,p)\nu=0$ allows to rewrite
\[\int_{\partial I_{\e,N}^n}F\cdot\sigma(u,p)\nu\,=\,\int_{\partial I_{\e,N}^n}\Big(F-\fint_{I_{\e,N}^{n;+}}F\Big)\cdot\sigma(u,p)\nu.\]
Choosing a cut-off function $\chi_\e$ with
\[\chi_\e|_{I_{\e,N}^n}=1,\qquad\supp\chi_\e\subset I_{\e,N}^{n;+},\qquad|\nabla\chi_\e|\lesssim\e^{-1},\]
and noting that equation~\eqref{eq:lem-trace-eqn} yields $\Div(\sigma(u,p))=0$ in the annulus $I_{\e,N}^{n;+}\setminus I_{\e,N}^n$,
we find by integration by parts,
\[\int_{\partial I_{\e,N}^n}F\cdot\sigma(u,p)\nu=\int_{I_{\e,N}^{n;+}\setminus I_{\e,N}^n}\nabla\bigg(\Big(F-\fint_{I_{\e,N}^{n;+}}F\Big)\chi_\e\bigg):\sigma(u,p).\]
Hence, by the Cauchy--Schwarz inequality followed by Poincaré's inequality, using properties of the cut-off function $\chi_\e$,
\[\bigg|\int_{\partial I_{\e,N}^n}F\cdot\sigma(u,p)\nu\bigg|\,\lesssim\,\|\nabla F\|_{\Ld^2(I_{\e,N}^{n;+})}\|\sigma(u,p)\|_{\Ld^2(I_{\e,N}^{n;+}\setminus I_{\e,N}^n)}.\]
For any $\Omega\in\R^{d\times d}_{\Skew}$, as the last condition in~\eqref{eq:lem-trace-eqn} yields
\[\int_{\partial I_{\e,N}^n}\Omega(x-X_{\e,N}^n)\cdot\sigma(u,p)\nu=0\]
we can subtract $\Omega(x-X_{\e,N}^n)$ to $F$ in the above estimate, to the effect of
\[\bigg|\int_{\partial I_{\e,N}^n}F\cdot\sigma(u,p)\nu\bigg|\,\lesssim\,\|\nabla F-\Omega\|_{\Ld^2(I_{\e,N}^{n;+})}\|\sigma(u,p)\|_{\Ld^2(I_{\e,N}^{n;+}\setminus I_{\e,N}^n)},\]
and thus, taking the infimum over $\Omega\in\R^{d\times d}_{\Skew}$ and appealing to Korn's inequality,
\[\bigg|\int_{\partial I_{\e,N}^n}F\cdot\sigma(u,p)\nu\bigg|\,\lesssim\,\|\!\D(F)\|_{\Ld^2(I_{\e,N}^{n;+})}\|\sigma(u,p)\|_{\Ld^2(I_{\e,N}^{n;+}\setminus I_{\e,N}^n)}.\]
It remains to estimate the pressure field $p$ in the right-hand side. As the incompressibility of $F$ implies $\int_{\partial I_{\e,N}^n}F\cdot\nu=0$, any constant can be subtracted to the pressure~$p$ in the above, hence in particular
\[\bigg|\int_{\partial I_{\e,N}^n}F\cdot\sigma(u,p)\nu\bigg|\,\lesssim\,\|\!\D(F)\|_{\Ld^2(I_{\e,N}^{n;+})}\bigg(\|\!\D(u)\|_{\Ld^2(I_{\e,N}^{n;+})}+\Big\|p-\fint_{I_{\e,N}^{n;+}\setminus I_{\e,N}^n}p\Big\|_{\Ld^2(I_{\e,N}^{n;+}\setminus I_{\e,N}^n)}\bigg).\]
Now appealing to a local pressure estimate for the steady Stokes equation, which follows from a standard argument based on the Bogovskii operator, e.g.~\cite[Lemma~3.3]{DG-22a}, the conclusion follows.
\end{proof}

Next, we recall the usual definition and the pointwise decay of the Stokeslet $\Gc$, which is the Green's function for the steady Stokes equation.

\begin{lem}[Stokeslet]\label{lem:Green}
For all $1\le i\le d$, we can define $\Gc_{i}\in W^{1,1}_\loc(\R^d)^d$ as the unique decaying distributional solution of
\[-\triangle\Gc_i+\nabla P_i=\ee_i\delta_0,\qquad\Div(\Gc_i)=0,\qquad\text{in $\R^d$},\]
and we then set $\Gc_{i}=(\Gc_{ij})_{1\le j\le d}$ and $\Gc=(\Gc_{ij})_{1\le i,j\le d}$.
This Green's function is explicitly given by
\begin{equation*}
\Gc(x)\,=\,\tfrac{1}{2(d-2)|\partial B|}|x|^{2-d}\Big(\Id+(d-2)\tfrac{x}{|x|}\otimes\tfrac{x}{|x|}\Big),
\end{equation*}
hence it satisfies the following pointwise estimates,
\[|\Gc(x)|\,\lesssim\,|x|^{2-d},\qquad|\nabla\Gc(x)|\,\lesssim\,|x|^{1-d},\qquad|\nabla^2\Gc(x)|\,\lesssim\,|x|^{-d}.\qedhere\]
\end{lem}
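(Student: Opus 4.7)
The plan is to construct $\Gc$ explicitly in Fourier space, invert the transform, and then read off the pointwise bounds by direct differentiation of the closed-form expression.

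First I would take the Fourier transform of the defining system $-\triangle \Gc_i+\nabla P_i=\ee_i\delta_0$, $\Div(\Gc_i)=0$. Writing $\hat\Gc_i$ and $\hat P_i$ for the respective transforms, this yields
\[
|\xi|^2 \hat\Gc_{ij}(\xi) + i\xi_j \hat P_i(\xi) = \delta_{ij},\qquad \xi\cdot\hat\Gc_i(\xi)=0.
\]
Contracting the momentum equation with $i\xi_j/|\xi|^2$ and inserting incompressibility gives $\hat P_i(\xi) = -i\xi_i/|\xi|^2$, and substituting back leads to $\hat\Gc_{ij}(\xi) = |\xi|^{-2}\bigl(\delta_{ij} - \xi_i\xi_j/|\xi|^2\bigr)$.

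To convert this into the stated real-space formula, I would invoke the classical Riesz identity $\mathcal{F}^{-1}(|\xi|^{-2}) = c_d\,|x|^{2-d}$, valid for $d>2$, together with the relation $\xi_i\xi_j/|\xi|^4 = -\tfrac12\partial_{\xi_i}\partial_{\xi_j}(|\xi|^{-2})$, which upon inversion produces precisely the anisotropic piece $x_ix_j/|x|^d$ in $\Gc$ with the right prefactor. A more hands-on alternative that I would probably favor for safety is to plug the candidate explicit formula directly into the Stokes system and verify it by differentiation: away from the origin, $\Gc$ is $(2-d)$-homogeneous and the identity $-\triangle\Gc_i+\nabla P_i=0$ is just a bookkeeping exercise on derivatives of $|x|^{2-d}$, while the Dirac mass is recovered by integrating against a test function on $\R^d\setminus B_\eta(0)$ and letting $\eta\downarrow0$.

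Uniqueness among decaying distributional solutions follows from a brief Liouville argument: two such solutions differ by a pair $(u,p)$ with $-\triangle u+\nabla p=0$, $\Div(u)=0$, both decaying at infinity; the divergence of the momentum equation gives $\triangle p=0$, so $p$ is a decaying harmonic tempered distribution and vanishes, hence $u$ is itself harmonic and decaying and vanishes as well. Finally, the pointwise bounds $|\Gc(x)|\lesssim|x|^{2-d}$, $|\nabla\Gc(x)|\lesssim|x|^{1-d}$, $|\nabla^2\Gc(x)|\lesssim|x|^{-d}$ drop out directly from the explicit formula by homogeneity. I expect the only real delicate point to be the distributional bookkeeping in the verification step, since $|x|^{2-d}$ is only locally integrable and the delta source must be recovered as a boundary term on a vanishing sphere around the origin; this is routine but is the one place where signs and constants in the final prefactor $\frac{1}{2(d-2)|\partial B|}$ need to be tracked carefully.
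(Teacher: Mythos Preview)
The paper does not actually prove this lemma: it is stated as a classical recall (the $\lozenge$ appears right after the statement and no proof environment follows). Your Fourier-space derivation, the alternative direct verification by homogeneity plus boundary-term extraction, and the Liouville uniqueness argument are all correct and standard, so your write-up would serve perfectly well as the proof the paper omits.
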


We also define a corresponding notion of Stokeslet for the steady Stokes problem with a single rigid inclusion, which satisfies a similar pointwise decay. This is a particular case of the analysis in~\cite[Appendix~A]{DG-21b}, where we further get a corresponding result for any finite family of well-separated rigid inclusions.

\begin{lem}[Stokeslet with rigid inclusions; \cite{DG-21b}]\label{lem:Green-rig}
For all $1\le n\le N$, $y\in\R^d\setminus I_{\e,N}^n$, and $1\le i\le d$, we can define $\Gc_{\e,N;i}^n(\cdot,y)\in W^{1,1}_\loc(\R^d)^d$ as the unique decaying distributional solution of
\[\left\{\begin{array}{ll}
-\triangle\Gc^{n}_{\e,N;i}(\cdot,y)+\nabla P^{n}_{\e,N;i}(\cdot,y)=\ee_i\delta_y,&\text{in $\R^d\setminus I^n_{\e,N}$},\\
\Div(\Gc^n_{\e,N;i}(\cdot,y))=0,&\text{in $\R^d\setminus I^n_{\e,N}$},\\
\D(\Gc^n_{\e,N;i}(\cdot,y))=0,&\text{in $I^n_{\e,N}$},\\
\int_{\partial I_{\e,N}^n}\sigma(\Gc^n_{\e,N;i}(\cdot,y),P^n_{\e,N;i}(\cdot,y))\nu=0,&\\
\int_{\partial I_{\e,N}^n}(\cdot-X_{\e,N}^i)\times\sigma(\Gc^n_{\e,N;i}(\cdot,y),P^n_{\e,N;i}(\cdot,y))\nu=0,&
\end{array}\right.\]
and we then set $\Gc_{\e,N;i}^n=(\Gc_{\e,N;ij}^n)_{1\le j\le d}$ and $\Gc_{\e,N}^n=(\Gc_{\e,N;ij}^n)_{1\le i,j\le d}$.
This Green's function satisfies the following pointwise estimates,
\[|\nabla_x\Gc^n_{\e,N}(x,y)|\,\lesssim\,|x-y|^{1-d},\qquad|\nabla_x\nabla_y\Gc_{\e,N}^n(x,y)|\,\lesssim\,|x-y|^{-d}.\qedhere\]
\end{lem}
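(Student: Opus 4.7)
The plan is to construct $\Gc^n_{\e,N;i}(\cdot,y)$ by a standard variational argument and then derive the pointwise bounds by decomposing into the free Stokeslet plus a correction which, by scaling and exterior multipole analysis, decays sufficiently fast.

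First, I would establish existence and uniqueness. Consider the closed subspace $H\subset \dot H^1(\R^d)^d$ of divergence-free fields that are rigid motions inside $I^n_{\e,N}$. The bilinear form $(u,v)\mapsto 2\int_{\R^d}\D(u):\D(v)$ is coercive on $H$ by Korn's inequality on $\dot H^1$, and for $y\notin I^n_{\e,N}$ and $d>2$ the Sobolev embedding $\dot H^1\hookrightarrow L^{2d/(d-2)}$ (together with local elliptic regularity away from $y$) allows to realize $v\mapsto v_i(y)$ as a bounded linear functional on $H$. Lax--Milgram thus yields a unique weak solution, which is $\Gc^n_{\e,N;i}(\cdot,y)$. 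The force-free and torque-free boundary conditions on $\partial I^n_{\e,N}$ are encoded automatically, as they correspond to testing against rigid-motion test fields supported near the inclusion.

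Next, I would write $\Gc^n_{\e,N;i}(\cdot,y)=\Gc_i(\cdot-y)+w^n_{\e,N;i}(\cdot,y)$, so that $w^n_{\e,N;i}$ solves the homogeneous Stokes problem outside $I^n_{\e,N}$ with boundary data given by (the rigid-motion mismatch of) $\Gc_i(\cdot-y)|_{\partial I^n_{\e,N}}$, still satisfying the force- and torque-free conditions on $\partial I^n_{\e,N}$. After rescaling to a unit-size inclusion, an energy estimate together with the trace bound of Lemma~\ref{lem:trace} gives $\|\!\D(w^n_{\e,N;i}(\cdot,y))\|_{L^2(I^{n;+}_{\e,N})}\lesssim |X^n_{\e,N}-y|^{1-d}$. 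For $x$ in the far field of the inclusion, the force- and torque-free conditions kill the Stokeslet and rotlet terms of the multipole expansion of $w^n_{\e,N;i}$, leaving a stresslet as leading contribution, so that $|\nabla_x w^n_{\e,N;i}(x,y)|\lesssim \e^d|x-X^n_{\e,N}|^{-d}|X^n_{\e,N}-y|^{1-d}$. In the near field (where $|x-X^n_{\e,N}|\lesssim\e$) one uses instead interior and boundary Stokes regularity on the rescaled geometry. Combining with $|\nabla\Gc(x-y)|\lesssim|x-y|^{1-d}$ yields the first pointwise estimate in all regimes.

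For the mixed bound $|\nabla_x\nabla_y\Gc^n_{\e,N}(x,y)|\lesssim|x-y|^{-d}$, I would exploit the symmetry $\Gc^n_{\e,N;ij}(x,y)=\Gc^n_{\e,N;ji}(y,x)$ coming from self-adjointness of the Stokes operator with rigid inclusion: this transfers the $\nabla_x$ bound into a $\nabla_y$ bound of the same order, and then an interior Stokes Schauder estimate applied on a ball of radius $\sim |x-y|/4$ around $x$ (or near-boundary Stokes regularity if $x$ approaches $\partial I^n_{\e,N}$) upgrades one derivative, producing the desired~$|x-y|^{-d}$. The main obstacle is the uniform far-field decay of~$w^n_{\e,N;i}$ with respect to the position of $y$: the sharp rate relies crucially on the vanishing of the net force and torque at~$\partial I^n_{\e,N}$, which suppresses the leading multipoles and leaves only a stresslet contribution, the precise scale-invariance being restored by rescaling to a unit inclusion.
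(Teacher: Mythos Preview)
The paper does not actually prove this lemma: it is quoted from \cite[Appendix~A]{DG-21b}, so there is no in-paper argument to compare to. I will therefore comment on your outline on its own merits.

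Your variational construction in the first paragraph does not work as written. The functional $v\mapsto v_i(y)$ is \emph{not} bounded on $H\subset\dot H^1(\R^d)^d$ for $d\ge2$: the Sobolev embedding only gives $L^{2d/(d-2)}$ control, and point evaluation is unbounded there. Your appeal to ``local elliptic regularity away from $y$'' is circular, since a generic test function $v\in H$ satisfies no equation near $y$. Relatedly, $\Gc^n_{\e,N;i}(\cdot,y)$ inherits the $|x-y|^{2-d}$ singularity of the free Stokeslet and thus cannot belong to $\dot H^1$, so Lax--Milgram on $H$ cannot produce it directly. The correct route is the one you already have in your second paragraph: \emph{define} $\Gc^n_{\e,N;i}(\cdot,y):=\Gc_i(\cdot-y)+w$, and apply Lax--Milgram (or the projection $\pi^{\{n\}}_{\e,N}$ of Section~\ref{sec:cluster}) to obtain the correction $w\in\dot H^1$, which is legitimate because the data $\D(\Gc_i(\cdot-y))|_{I^n_{\e,N}}$ is smooth for $y\notin I^n_{\e,N}$. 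So your existence argument should be reorganized: drop the first paragraph and make the decomposition the starting point.

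The remainder of your sketch --- energy estimate on $w$, stresslet-order far-field decay from the force- and torque-free conditions, symmetry $\Gc^n_{\e,N;ij}(x,y)=\Gc^n_{\e,N;ji}(y,x)$, and interior/boundary Stokes regularity to gain the extra derivative --- is the right skeleton. One point that deserves more care than you indicate: your energy bound $\|\D(w)\|_{L^2(I^{n;+}_{\e,N})}\lesssim|X^n_{\e,N}-y|^{1-d}$ is only sharp when $\dist(y,I^n_{\e,N})\gtrsim\e$; if $y$ approaches $\partial I^n_{\e,N}$ the $L^2$ norm of $\D(\Gc_i(\cdot-y))$ over $I^n_{\e,N}$ picks up the local singularity and degenerates. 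The clean way to handle all regimes uniformly is to rescale to a unit inclusion from the outset and invoke the Green's function bounds for the exterior Stokes problem on the fixed domain $\R^d\setminus I^\circ(r)$, which is precisely what \cite[Appendix~A]{DG-21b} does.
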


\medskip
\section{Lipschitz estimate on the fluid velocity}\label{sec:Lip}
This section is devoted to the proof of the following a priori Lipschitz estimate on the fluid velocity in the dilute regime, which holds as long as particles are sufficiently well separated.
In case of spherical passive particles, this was first established by H\"ofer in~\cite[Lemma~3.16]{Hofer-18}, based on an expansion of the fluid velocity by means of the method of reflections.
Note that the argument in that work was for spherical passive particles and indeed relied on the spherical shape of the particles, see e.g.~\cite[Lemma~3.10]{Hofer-18}.
In the present contribution, we give an alternative, more robust, perturbative argument that avoids the method of reflections and further allows to cover the case of non-spherical and active particles.

\begin{prop}[Lipschitz estimate on fluid velocity]\label{prop:Lip-est}
Recall the notation~\eqref{eq:def-min}--\eqref{eq:def-alpha}, and assume that $\dd_{\e,N}^{\min}\ge4\e$ and that $\lambda\alpha_{\e,N}^0\ll1$ is small enough.
Then we have
\begin{equation*}
\|u_{\e,N}\|_{W^{1,\infty}(\R^d)}\,\lesssim_h\,1.\qedhere
\end{equation*}
\end{prop}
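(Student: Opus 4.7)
The approach is a perturbative bootstrap based on an integral representation with the single-particle Stokeslet $\Gc^n_{\e,N}$ from Lemma~\ref{lem:Green-rig}, replacing the method of reflections used for spheres in~\cite{Hofer-18}. Let $M := \|\nabla u_{\e,N}\|_{L^\infty(\R^d)}$, finite by standard local regularity for the Stokes problem with $C^2$ rigid inclusions. The aim is to establish the inequality
\[M \,\lesssim_h\, 1 + \lambda\alpha^0_{\e,N}M,\]
which yields $M \lesssim_h 1$ by absorption under the hypothesis $\lambda\alpha^0_{\e,N}\ll1$. The $L^\infty$ bound on $u_{\e,N}$ itself then follows from the same representation together with the first pointwise estimate of Lemma~\ref{lem:Green-rig} and the decay of $u_{\e,N}$ at infinity.

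Fix $x_0\in\R^d$ and let $n$ denote the index of the particle nearest to~$x_0$. By Lemma~\ref{lem:eqn-pseudo}, the rigidity constraint at each particle $m\ne n$ may be viewed as a boundary source term $-\delta_{\partial I^m_{\e,N}}\sigma(u_{\e,N},p_{\e,N})\nu$ in the Stokes equation posed on the whole space, while the rigidity at particle~$n$ is absorbed by the vanishing resultant and torque conditions built into $\Gc^n_{\e,N}$. This yields the representation
\[
u_{\e,N}(x_0) \,=\, \int_{\R^d}\Gc^n_{\e,N}(x_0,y)F(y)\,dy \,-\, \sum_{m\ne n}\int_{\partial I^m_{\e,N}}\Gc^n_{\e,N}(x_0,y)\sigma(u_{\e,N},p_{\e,N})\nu\,d\sigma(y),
\]
with $F := h\mathds1_{\R^d\setminus\Ic_{\e,N}} + e\mathds1_{\Ic_{\e,N}} + \kappa_0\tfrac{\lambda}{\e N}\sum_n f^n_{\e,N}$. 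Differentiating in $x_0$ and using $|\nabla_x\Gc^n_{\e,N}(x_0,y)|\lesssim|x_0-y|^{1-d}$ from Lemma~\ref{lem:Green-rig}, the bulk integral contributes at most~$C_h$: the compact support of~$h$ gives the main bounded piece, while the buoyancy yields $\lesssim\lambda\alpha^0_{\e,N}$ and the propulsion term is handled by using the cancellation $\int f(\cdot,r)=0$ (which upgrades each~$f^n_{\e,N}$ to an effective dipole at scale $\e$).

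For each $m\ne n$, the kernel $y\mapsto\nabla_x\Gc^n_{\e,N}(x_0,y)\cdot a$ is divergence-free in~$y$ on $I^{m;+}_{\e,N}$ by Green's reciprocity for the single-inclusion Stokes problem (with no singularity there since $x_0\notin I^{m;+}_{\e,N}$), so the trace estimate of Lemma~\ref{lem:trace} applies. Combined with the second pointwise bound $|\nabla_x\nabla_y\Gc^n_{\e,N}(x_0,y)|\lesssim|x_0-y|^{-d}$, the separation assumption $\dd^{\min}_{\e,N}\ge4\e$, and the identity $\D u_{\e,N}=0$ inside $I^m_{\e,N}$, we obtain
\[
\Bigl|\nabla_x\!\!\int_{\partial I^m_{\e,N}}\Gc^n_{\e,N}(x_0,y)\sigma(u_{\e,N},p_{\e,N})\nu\,d\sigma(y)\Bigr| \,\lesssim\, \e^{d/2}|X^n_{\e,N}-X^m_{\e,N}|^{-d}\cdot\e^{d/2}M.
\]
Summing over $m\ne n$ and using $N\e^d\sim\lambda$ together with the definition~\eqref{eq:def-alpha} of $\alpha^0_{\e,N}$ yields a total contribution $\lesssim\lambda\alpha^0_{\e,N}M$, completing the bootstrap inequality.

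The main obstacle is the setup of the representation formula: one must carefully verify that the choice of $\Gc^n_{\e,N}$ as Green's function does cancel the boundary contribution at the nearest particle~$\partial I^n_{\e,N}$ (where the plain Stokeslet~$\Gc$ would only yield decay~$\sim\e^{-d}$, far too singular to be absorbed), and that the resulting kernel remains divergence-free in~$y$ so that Lemma~\ref{lem:trace} can be applied uniformly at every other particle. The rigidity-compatible boundary conditions in Lemma~\ref{lem:Green-rig} are precisely designed to make this work; once they are in place, the rest is a matter of combining the pointwise bounds from Lemma~\ref{lem:Green-rig} with the geometric quantity~$\alpha^0_{\e,N}$ as above.
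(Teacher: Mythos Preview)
Your proof is correct and follows the same strategy as the paper's: represent $u_{\e,N}$ via the single-inclusion Stokeslet $\Gc^n_{\e,N}$ of Lemma~\ref{lem:Green-rig}, control the boundary contributions at the other particles through the trace estimate of Lemma~\ref{lem:trace}, and absorb using $\lambda\alpha^0_{\e,N}\ll1$. Your version is somewhat more streamlined in that you close the bootstrap directly on $M=\|\nabla u_{\e,N}\|_{\Ld^\infty}$ (relying on its a priori finiteness from local regularity), whereas the paper first closes on the local averages $\bigl(\fint_{I^{n;+}_{\e,N}}|\!\D(u_{\e,N})|^2\bigr)^{1/2}$ in its Step~3 and treats points near and far from the particles in separate cases (Steps~1--2) before combining them.
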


\begin{proof}
We split the proof into four steps.

\medskip
\step1 Preliminary estimate on the fluid velocity away from rigid particles: proof that, for all~$x\in\R^d$ with $\dist(x,\{X_{\e,N}^n\}_n)\ge\frac12\dd_{\e,N}^{\min}$, we have
\begin{eqnarray}
|u_{\e,N}(x)|&\lesssim&C_h
+|e|\lambda\alpha_{\e,N}^2+\kappa_0\lambda\alpha_{\e,N}^1\nonumber\\
&&\hspace{0.7cm}+\tfrac\lambda N\sum_{n=1}^N|x-X_{\e,N}^n|^{1-d}\Big(\fint_{I_{\e,N}^{n}+\e B}|\!\D(u_{\e,N})|^2\Big)^\frac12,\nonumber\\
|\nabla u_{\e,N}(x)|&\lesssim&C_h
+|e|\lambda\alpha_{\e,N}^1+\kappa_0\lambda\alpha_{\e,N}^0\nonumber\\
&&\hspace{0.7cm}+\tfrac\lambda N\sum_{n=1}^N|x-X_{\e,N}^n|^{-d}\Big(\fint_{I_{\e,N}^{n}+\e B}|\!\D(u_{\e,N})|^2\Big)^\frac12.\label{eq:pre-concl-lipest}
\end{eqnarray}
From the Stokes problem~\eqref{eq:main-Stokes}, using Lemma~\ref{lem:eqn-pseudo}, we find that the fluid velocity~$u_{\e,N}$ satisfies the following equation in~$\R^d$,
\begin{multline}\label{eq:eqn-pseudo-ex}
-\triangle u_{\e,N}+\nabla (p_{\e,N}\mathds1_{\R^d\setminus\Ic_{\e,N}})\\
\,=\,h\mathds1_{\R^d\setminus\Ic_{\e,N}}+\kappa_0\tfrac{\lambda}{\e N}\sum_{n=1}^Nf_{\e,N}^n\mathds1_{\R^d\setminus\Ic_{\e,N}}
-\sum_{n=1}^N\delta_{\partial I_{\e,N}^n}\sigma(u_{\e,N},p_{\e,N})\nu.
\end{multline}
In terms of the Stokeslet $\Gc$, cf.~Lemma~\ref{lem:Green}, noting that the assumption $\dd_{\e,N}^{\min}\ge4\e$ ensures that propulsion forces $\{f_{\e,N}^n\}_n$ have pairwise disjoint supports, we deduce
\begin{multline*}
u_{\e,N}(x)\,=\,\int_{\R^d\setminus\Ic_{\e,N}}\Gc(x-\cdot)h
+\kappa_0\tfrac{\lambda}{\e N}\sum_{n=1}^N\int_{I_{\e,N}^{n;+}\setminus I_{\e,N}^n}\Gc(x-\cdot)f_{\e,N}^n\\
-\sum_{n=1}^N\int_{\partial I_{\e,N}^n}\Gc(x-\cdot)\,\sigma(u_{\e,N},p_{\e,N})\nu,
\end{multline*}
hence, using boundary conditions for $u_{\e,N}$, cf.~\eqref{eq:main-Stokes}, and recalling~\eqref{eq:extend-f-I},
\begin{multline}\label{eq:decomp-ueps-Green}
u_{\e,N}(x)\,=\,\int_{\R^d\setminus\Ic_{\e,N}}\Gc(x-\cdot)h
+\sum_{n=1}^N\int_{I_{\e,N}^n}\Gc(x-\cdot)e
+\kappa_0\tfrac{\lambda}{\e N}\sum_{n=1}^N\int_{I_{\e,N}^{n;+}}\Gc(x-\cdot)f_{\e,N}^n\\
-\sum_{n=1}^N\int_{\partial I_{\e,N}^n}\Big(\Gc(x-\cdot)-\fint_{I_{\e,N}^n}\Gc(x-\cdot)\Big)\sigma(u_{\e,N},p_{\e,N})\nu.
\end{multline}
Appealing to a trace estimate and to pointwise bounds on $\Gc$, cf.~Lemmas~\ref{lem:trace} and~\ref{lem:Green}, recalling the local balance condition~\eqref{eq:local-balance} for propulsion forces, performing local integrals, and recalling $\e^d|I^\circ|=\frac\lambda N$,
we find for all $x\in\R^d$ with $\dist(x,\Ic_{\e,N})\ge2\e$,
\begin{multline*}
|u_{\e,N}(x)|\,\lesssim\,\int_{\R^d}|x-\cdot|^{2-d}|h|
+|e|\tfrac\lambda N\sum_{n=1}^N| x-X_{\e,N}^n|^{2-d}+\kappa_0\tfrac{\lambda}{N}\sum_{n=1}^N| x-X_{\e,N}^n|^{1-d}\\
+\tfrac{\lambda}N\sum_{n=1}^N| x-X_{\e,N}^n|^{1-d}\Big(\fint_{I_{\e,N}^{n;+}}|\!\D(u_{\e,N})|^2\Big)^\frac12.
\end{multline*}
Similarly, first differentiating in space,
\begin{multline}\label{eq:preconcl-claim11}
|\nabla u_{\e,N}(x)|\,\lesssim\,\int_{\R^d}|x-\cdot|^{1-d}|h|
+|e|\tfrac{\lambda}N\sum_{n=1}^N|x-X_{\e,N}^n|^{1-d}
+\kappa_0\tfrac\lambda N\sum_{n=1}^N| x-X_{\e,N}^n|^{-d}\\
+\tfrac\lambda N\sum_{n=1}^N|x-X_{\e,N}^n|^{-d}\Big(\fint_{I_{\e,N}^{n;+}}|\!\D(u_{\e,N})|^2\Big)^\frac12.
\end{multline}
Now note that, for all $x\in\R^d$ with $\dist(x,\{X^n_{\e,N}\}_{n})\ge\tfrac12\dd_{\e,N}^{\min}$,
if $X_{\e,N}^p$ is a particle that is the closest to $x$, we can estimate for any $\sigma\in[0,d]$,
\begin{equation}\label{eq:bnd-x-alpha}
\tfrac{1}N\sum_{n=1}^N| x-X_{\e,N}^n|^{\sigma-d}\,\lesssim\,\tfrac{1}N\sum_{n:n\ne p}^N|X_{\e,N}^p-X_{\e,N}^n|^{\sigma-d}\,\le\,\alpha_{\e,N}^\sigma,
\end{equation}
where we recall that $\alpha_{\e,N}^\sigma$ is defined in~\eqref{eq:def-alpha}.
Using this bound in~\eqref{eq:preconcl-claim11}, the claim~\eqref{eq:pre-concl-lipest} follows.

\medskip
\step2 Preliminary estimate on the fluid velocity close to rigid particles:
for all $x\in\R^d\setminus\Ic_{\e,N}$ with $|x-X_{\e,N}^n|\le\frac12\dd_{\e,N}^{\min}$ for some $1\le n\le N$, we have
\begin{eqnarray}
|u_{\e,N}(x)|&\lesssim&C_h+|e|(\lambda\alpha_{\e,N}^2+\e^2)+\kappa_0(\lambda\alpha_{\e,N}^1+\e)\nonumber\\
&&\hspace{0.7cm}+\tfrac\lambda N\sum_{m:m\ne n}^N|X_{\e,N}^n-X_{\e,N}^m|^{1-d}\Big(\fint_{I_{\e,N}^{m;+}}|\!\D(u_{\e,N})|^2\Big)^\frac12,\nonumber\\
|\nabla u_{\e,N}(x)|&\lesssim&C_h
+|e|(\lambda\alpha_{\e,N}^1+\e)+\kappa_0(\lambda\alpha_{\e,N}^0+1)\nonumber\\
&&\hspace{0.7cm}+\tfrac\lambda N\sum_{m:m\ne n}^N|X_{\e,N}^n-X_{\e,N}^m|^{-d}\Big(\fint_{I_{\e,N}^{m;+}}|\!\D(u_{\e,N})|^2\Big)^\frac12.\label{eq:pre-concl-lipest-2}
\end{eqnarray}
Let $x\in\R^d\setminus\Ic_{\e,N}$ be fixed with $|x-X_{\e,N}^n|\le\frac12\dd_{\e,N}^{\min}$ for some $1\le n\le N$.
Note that we then have $\dist(x,\{X_{\e,N}^m\}_{m:m\ne n})\ge\frac12\dd_{\e,N}^{\min}$.
Testing equation~\eqref{eq:eqn-pseudo-ex} in $\R^d\setminus I_{\e,N}^n$ with the Stokeslet~$\Gc^n_{\e,N}(\cdot,x)$ for the Stokes problem with a single rigid inclusion, cf.~Lemma~\ref{lem:Green-rig}, we find, instead of~\eqref{eq:decomp-ueps-Green},
\begin{multline*}
u_{\e,N}(x)
\,=\,\int_{\R^d\setminus\Ic_{\e,N}}\Gc_{\e,N}^n(\cdot,x) h
+\sum_{m=1}^N \int_{I_{\e,N}^m}\Gc_{\e,N}^n(\cdot,x)e
+\kappa_0\tfrac{\lambda}{\e N}\sum_{m=1}^N\int_{I_{\e,N}^{m;+}}\Gc_{\e,N}^n(\cdot,x) f_{\e,N}^m\\
-\sum_{m:m\ne n}^N\int_{\partial I_{\e,N}^m}\Big(\Gc_{\e,N}^n(\cdot,x)-\fint_{I_{\e,N}^m}\Gc_{\e,N}^n(\cdot,x)\Big)\sigma(u_{\e,N},p_{\e,N})\nu.
\end{multline*}
Now appealing to a trace estimate and to the pointwise bounds on $\Gc^n_{\e,N}$, cf.~Lemmas~\ref{lem:trace} and~\ref{lem:Green-rig}, and noting that $|x-X_{\e,N}^m|\gtrsim|X_{\e,N}^n-X_{\e,N}^m|$ for all $m$, the claim~\eqref{eq:pre-concl-lipest-2} follows similarly as in Step~2.
The only difference is that we now need to treat diagonal contributions separately: for instance,
\begin{eqnarray*}
\bigg|\sum_{m=1}^N\int_{I_{\e,N}^m}\Gc_{\e,N}^n(\cdot,x)e\bigg|
&\lesssim&|e|\tfrac\lambda N\fint_{I_{\e,N}^n}|\cdot-x|^{2-d}+|e|\tfrac\lambda N\sum_{m:m\ne n}^N\fint_{I_{\e,N}^m}|\cdot-x|^{2-d}\\
&\lesssim&|e|\tfrac\lambda N\langle x-X_{\e,N}^n\rangle_\e^{2-d}+|e|\tfrac\lambda N\sum_{m:m\ne n}^N| x-X_{\e,N}^m|^{2-d}\\
&\lesssim&|e|(\e^2+\lambda\alpha_{\e,N}^2),
\end{eqnarray*}
where we use the short-hand notation $\langle x\rangle_\e:=(\e^2+|x|^2)^{1/2}$,
and where the last inequality follows from~\eqref{eq:bnd-x-alpha} with $\dist(x,\{X_{\e,N}^m\}_{m:m\ne n})\ge\frac12\dd_{\e,N}^{\min}$.

\medskip
\step3 Closed estimate on velocity gradients: proof that for all $1\le n\le N$,
\begin{equation}\label{eq:est-fintDu}
\Big(\fint_{I_{\e,N}^{n;+}}|\!\D(u_{\e,N})|^2\Big)^\frac12
\,\lesssim\,C_h+|e|(\lambda\alpha_{\e,N}^1+\e)+\kappa_0(\lambda\alpha_{\e,N}^0+1).
\end{equation}
This estimate is obtained by post-processing the results of the first two steps in the dilute regime $\lambda\ll1$.
From~\eqref{eq:pre-concl-lipest-2}, we deduce in particular, for all~$1\le n\le N$,
\begin{multline}\label{eq:reform-step2-intDu}
\Big(\fint_{I_{\e,N}^{n;+}}|\!\D(u_{\e,N})|^2\Big)^\frac12\,\lesssim\,
C_h+|e|(\lambda\alpha_{\e,N}^1+\e)+\kappa_0(\lambda\alpha_{\e,N}^0+1)\\
+\tfrac\lambda N\sum_{m:m\ne n}^N|X_{\e,N}^n-X_{\e,N}^m|^{-d}\Big(\fint_{I_{\e,N}^{m;+}}|\!\D(u_{\e,N})|^2\Big)^\frac12.
\end{multline}
After summation, recalling~\eqref{eq:def-alpha}, this leads us to
\begin{multline*}
\sup_{1\le n\le N}\bigg[\tfrac1 N\sum_{m:m\ne n}^N|X_{\e,N}^n-X_{\e,N}^m|^{-d}\Big(\fint_{I_{\e,N}^{m;+}}|\!\D(u_{\e,N})|^2\Big)^\frac12\bigg]\\
\,\lesssim\,
\alpha_{\e,N}^0
\big(C_h+|e|(\lambda\alpha_{\e,N}^1+\e)+\kappa_0(\lambda\alpha_{\e,N}^0+1)\big)\\
+\lambda\alpha_{\e,N}^0\,\sup_{1\le n\le N}\bigg[\tfrac1N\sum_{m:m\ne n}^N|X_{\e,N}^n-X_{\e,N}^m|^{-d}\Big(\fint_{I_{\e,N}^{m;+}}|\!\D(u_{\e,N})|^2\Big)^\frac12\bigg].
\end{multline*}
Provided that $\lambda\alpha_{\e,N}^0\ll1$ is small enough, the last right-hand side term can be absorbed, to the effect of
\begin{multline*}
\sup_{1\le n\le N}\bigg[\tfrac1N\sum_{m:m\ne n}^N|X_{\e,N}^n-X_{\e,N}^m|^{-d}\Big(\fint_{I_{\e,N}^{m;+}}|\!\D(u_{\e,N})|^2\Big)^\frac12\bigg]\\
\,\lesssim\,
\alpha_{\e,N}^0
\big(C_h+|e|(\lambda\alpha_{\e,N}^1+\e)+\kappa_0(\lambda\alpha_{\e,N}^0+1)\big).
\end{multline*}
Inserting this back into~\eqref{eq:reform-step2-intDu}, the claim~\eqref{eq:est-fintDu} follows.

\medskip
\step4 Conclusion.\\
Inserting~\eqref{eq:est-fintDu} into~\eqref{eq:pre-concl-lipest} and~\eqref{eq:pre-concl-lipest-2}, using again~\eqref{eq:bnd-x-alpha}, using Hölder's inequality in form of
\begin{equation}\label{eq:Holder-alpha}
(\alpha_{\e,N}^1)^2\le\alpha_{\e,N}^0\alpha_{\e,N}^2,\qquad\alpha^1_{\e,N}\le(\alpha^0_{\e,N})^\frac{d-1}d,\qquad\alpha^2_{\e,N}\le(\alpha^1_{\e,N})^\frac{d-2}{d-1}\le(\alpha^0_{\e,N})^\frac{d-2}d,
\end{equation}
and using $\lambda\alpha_{\e,N}^0\le1$,
we deduce for all $x\in\R^d\setminus\Ic_{\e,N}$,
\begin{equation}\label{eq:Lipestim-pr}
|u_{\e,N}(x)|+|\nabla u_{\e,N}(x)|\,\lesssim\,C_h+|e|\big(\lambda(\alpha_{\e,N}^1+1)+\e\big)+\kappa_0,
\end{equation}
which yields the conclusion as $|e|,\kappa_0\le1$.
\end{proof}

\section{Dilute expansion of particle velocities}\label{sec:cluster}
This section is devoted to the following dilute expansion of particle velocities.
In case of spherical passive particles, a corresponding expansion of translational velocities was already obtained in~\cite{Hofer-Schubert-21}.
While previous contributions on the topic were based on the reflection method~\cite{Jabin-Otto-04,Hofer-18,Mecherbet-19,Hofer-Schubert-21},
we take a different path inspired by our recent work with Gloria~\cite{DG-21b}: the dilute expansion is obtained here instead by means of a cluster expansion combined with a monopole approximation.
The proof is split into several parts and is concluded by combining Lemmas~\ref{lem:cluster-trunc} and~\ref{lem:monopole} below.

\begin{prop}[Dilute expansion of particle velocities]\label{prop:dilute-exp}
Assume that \mbox{$\dd_{\e,N}^{\min}\ge4\e$} and that~$\lambda\alpha_{\e,N}^0\ll1$ is small enough.
Particle velocities~\eqref{eq:VTheta} can then be expanded as follows, for all $1\le n\le N$,
\begin{enumerate}[---]
\item First-order expansion of translational velocities:
\begin{equation*}
\quad|V_{\e,N}^n-(\Gc\ast h)(X_{\e,N}^n)|
\,\lesssim_h\,\lambda(\alpha_{\e,N}^1+1)+\e;
\end{equation*}
\item Second-order expansion of translational velocities:
\begin{multline*}
\quad\Big|V_{\e,N}^n-\big[\Gc\,\hat\ast\, \big((1-\lambda\mu_{\e,N})h+\lambda\mu_{\e,N}e\big)\big](X_{\e,N}^n)\\
-\lambda \big[\nabla\Gc\,\hat\ast\,\big(2\langle\Sigma^\circ\nu_{\e,N}\rangle\!\D(\Gc\ast h)+\kappa_0\langle\Sigma^\circ_f\nu_{\e,N}\rangle\big)\big](X_{\e,N}^n)
-\kappa_0\e V^\circ_f(R_{\e,N}^n)\Big|\\
\,\lesssim_h\,
(\lambda\alpha_{\e,N}^1+\e)\big(\lambda(\alpha_{\e,N}^0+1)+\e\big);
\end{multline*}
\item First-order expansion of angular velocities:
\begin{equation*}
\quad\big|\Omega_{\e,N}^n-\Omega^\circ(R_{\e,N}^n)\nabla(\Gc\ast h)(X_{\e,N}^n)\big|
\,\lesssim_h\,\lambda(\alpha_{\e,N}^0+1)+\e;
\end{equation*}
\end{enumerate}
where we recall the short-hand notation $g\,\hat\ast\,\mu(x)=\int_{\R^d\setminus\{x\}}g(x-y)d\mu(y)$ for diagonal-free convolutions, and where $\Sigma^\circ,\Omega^\circ,\Sigma_f^\circ,V_f^\circ$ are defined in~\eqref{eq:def-S(r)}--\eqref{eq:def-act-drag}.
Moreover, away from the particles, we have the following expansion for the fluid velocity, for any boundary-layer thickness~\mbox{$\delta\in[\e,1]$},
\begin{align*}
&\Big\|u_{\e,N}-\Gc\ast \big((1-\lambda\mu_{\e,N})h+\lambda\mu_{\e,N}e\big)\\
&\hspace{2cm}-\lambda\nabla\Gc\ast\big(2\langle\Sigma^\circ\nu_{\e,N}\rangle\!\D(\Gc\ast h)
+\kappa_0\langle\Sigma^\circ_f\nu_{\e,N}\rangle\big)\Big\|_{\Ld^\infty(\R^d\setminus(\Ic_{\e,N}+\delta B))}\nonumber\\
&\hspace{7cm}\,\lesssim_h\,\lambda(\lambda\alpha_{\e,N}^1+\e)(\alpha_{\e,N}^0+1)+\e(\tfrac\e\delta)^d.\nonumber\qedhere
\end{align*}
\end{prop}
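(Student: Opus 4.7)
The plan is to represent $u_{\e,N}$ pointwise via the Stokeslet $\Gc$ by testing against the ``pseudo-Stokes'' equation of Lemma~\ref{lem:eqn-pseudo}, exactly as in Step~1 of the proof of Proposition~\ref{prop:Lip-est}, and then to reorganize the resulting expression in two stages: first, a cluster-truncation step that peels off the background Stokes flow and the contribution of each particle one at a time, controlling the remainder by means of $\fint_{I_{\e,N}^{m;+}}|\!\D(u_{\e,N})|^2$; second, a monopole (stresslet) approximation that replaces each particle's contribution by a point-source term with the dipole coefficients $\Sigma^\circ$ and $\Sigma_f^\circ$ coming from the single-particle problems~\eqref{eq:eqn-uRE0} and~\eqref{eq:eqn-defin-ugamma}.

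Concretely, averaging the Stokeslet representation~\eqref{eq:decomp-ueps-Green} over $I_{\e,N}^n$ (and similarly for $\nabla u_{\e,N}$) yields a decomposition
\[
V_{\e,N}^n\,=\,\int_{\R^d}\Gc(X_{\e,N}^n-\cdot)\bigl[(1-\lambda\mu_{\e,N})h+\lambda\mu_{\e,N}e\bigr]+\kappa_0\tfrac{\lambda}{\e N}\sum_m\fint_{I_{\e,N}^n}\!\!\int\Gc(\cdot-\cdot')f_{\e,N}^m-\mathcal{R}^n,
\]
up to $O(\e)$ errors coming from replacing $\mathds1_{\R^d\setminus\Ic_{\e,N}}$ by $1-\lambda\mu_{\e,N}$, where $\mathcal{R}^n$ is a sum over $m$ of boundary-layer stress integrals on $\partial I_{\e,N}^m$. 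The self-term $m=n$ in the propulsion sum produces the swimming contribution $\kappa_0\e V_f^\circ(R_{\e,N}^n)$ via the definition~\eqref{eq:def-act-drag-0}, and the off-diagonal terms are bounded by $\kappa_0\lambda\alpha_{\e,N}^1$ using the local balance~\eqref{eq:local-balance} and pointwise decay of $\Gc$. For the first-order expansion, bounding $\mathcal{R}^n$ directly by Lemmas~\ref{lem:trace} and~\ref{lem:Green} together with the closed gradient estimate~\eqref{eq:est-fintDu} produces the error $\lambda(\alpha_{\e,N}^1+1)+\e$; the angular velocity statement follows identically by differentiating $\Gc$ once and identifying $\Omega^\circ(R_{\e,N}^n)\nabla(\Gc\ast h)(X_{\e,N}^n)$ through its definition~\eqref{eq:defin-Omega0}.

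For the second-order expansion, the main step is to approximate each off-diagonal boundary integral $\int_{\partial I_{\e,N}^m}(\Gc(X_{\e,N}^n-\cdot)-\mathrm{avg})\,\sigma(u_{\e,N},p_{\e,N})\nu$ by its leading multipole. To that end, I would freeze all other particles and view $u_{\e,N}$ near $I_{\e,N}^m$ as the superposition of a smooth ambient field $u^{\mathrm{bg}}_m$ (generated by $h$, buoyancy $e$, and all the other particles) plus the disturbance of particle~$m$ in response to the linearized ambient strain $E_m=\D(u^{\mathrm{bg}}_m)(X_{\e,N}^m)$ and to its own propulsion $f(\cdot-X_{\e,N}^m,R_{\e,N}^m)$. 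The latter disturbance is, up to relative error $O(\e)$ from further gradients of $u^{\mathrm{bg}}_m$, the single-particle solution $u^\circ_{R^m,E_m}+\kappa_0 u^\circ_{R^m,f}$ of~\eqref{eq:eqn-uRE0}--\eqref{eq:eqn-defin-ugamma}, whose stresslet is precisely $\Sigma^\circ(R^m)E_m+\kappa_0\Sigma_f^\circ(R^m)$ by~\eqref{eq:def-S(r)-bis} and~\eqref{eq:def-S(r)gamma-re}. Replacing $\Gc(X^n-\cdot)-\mathrm{avg}$ by $\nabla\Gc(X^n-X^m)\cdot(\cdot-X^m)$, and replacing the ambient strain $E_m$ by $\D(\Gc\ast h)(X^m)$ at the cost of an $O(\lambda(\alpha_{\e,N}^0+1))$ relative error (picked up by using the already-established first-order expansion as a bootstrap), yields exactly the claimed diagonal-free convolution of $\nabla\Gc$ against the empirical measure $\nu_{\e,N}$.

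The main obstacle is the simultaneous bookkeeping of the cluster-truncation error and the monopole approximation error while keeping the rates sharp in terms of $\alpha_{\e,N}^\sigma$. The truncation error has to be absorbed using the smallness $\lambda\alpha_{\e,N}^0\ll 1$ by iterating the Neumann-type estimate~\eqref{eq:reform-step2-intDu} obtained in Step~3 of Proposition~\ref{prop:Lip-est}, while the monopole error hinges on the $|x|^{-d}$ decay of $\nabla^2\Gc$ and on the decay of $\nabla(u^\circ_{r,E}-\text{dipole})$ at infinity, which controls the higher multipoles. Combining both yields the stated Lemmas~\ref{lem:cluster-trunc} and~\ref{lem:monopole}, and summing the resulting local bounds against $\tfrac1N\sum_m|X^n-X^m|^{\sigma-d}\le\alpha_{\e,N}^\sigma$ via~\eqref{eq:bnd-x-alpha} and~\eqref{eq:Holder-alpha} produces the second-order estimate. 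Finally, the pointwise $\Ld^\infty$ bound on $u_{\e,N}$ outside $\Ic_{\e,N}+\delta B$ is obtained by running the same analysis at a general point $x$; the boundary-layer correction $\e(\tfrac{\e}{\delta})^d$ is the contribution of the monopole-approximation error from the single nearest particle, whose distance to $x$ may be as small as $\delta$, whereas the summed contribution of far-away particles is still controlled by $\lambda(\lambda\alpha_{\e,N}^1+\e)(\alpha_{\e,N}^0+1)$ as for the particle-centered estimate.
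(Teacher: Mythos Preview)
Your overall strategy is sound and close to the paper's: represent $u_{\e,N}$ via the Stokeslet, separate a cluster-truncation error (controlled by $\lambda\alpha_{\e,N}^0\ll1$) from a monopole/stresslet approximation error, and sum against $\alpha_{\e,N}^\sigma$. The paper organizes the same two steps through the formal cluster expansion $u_{\e,N}=\sum_k u_{\e,N}^{(k)}$ with $u_{\e,N}^{(k)}$ built from differences of the projections $\pi_{\e,N}^J$, rather than through a direct ``background $+$ disturbance'' splitting, but at second order these are equivalent reorganizations.

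There is, however, a concrete gap in your treatment of the \emph{diagonal} contributions. You write that ``the self-term $m=n$ in the propulsion sum produces the swimming contribution $\kappa_0\e V_f^\circ(R_{\e,N}^n)$ via~\eqref{eq:def-act-drag-0}''. This is not correct: the self-term $\kappa_0\tfrac{\lambda}{\e N}\fint_{I_{\e,N}^n}\!\int\Gc(\cdot-\cdot')f_{\e,N}^n$ is (after rescaling) $\kappa_0\e\fint_{I^\circ(R^n)}\Gc\ast f(\cdot,R^n)$, i.e.\ the average of the \emph{free-space} Stokes solution, whereas $V_f^\circ(r)=\fint_{I^\circ(r)}u^\circ_{r,f}$ is defined through the single-particle problem~\eqref{eq:eqn-defin-ugamma} \emph{with the rigid inclusion}. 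The missing piece is precisely the diagonal boundary-stress term $m=n$ in your $\mathcal R^n$, which is $O(1)$ at the scale of the particle and cannot be discarded. The same issue recurs for the angular velocity: ``differentiating $\Gc$ once'' gives you $\nabla(\Gc\ast h)(X_{\e,N}^n)$, not $\Omega^\circ(R_{\e,N}^n)\nabla(\Gc\ast h)(X_{\e,N}^n)$; the factor $\Omega^\circ-\Id$ in~\eqref{eq:defin-Omega0} is again the self-boundary-stress contribution encoding the rigid response of particle $n$ to the ambient strain.

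The paper avoids this bookkeeping trap by not separating the self-propulsion and self-stress terms at all: the diagonal contribution to $V_{\e,N}^n$ is written as $\fint_{I_{\e,N}^n}u_{\e,N}^n$ with $u_{\e,N}^n=\pi_{\e,N}^{\{n\}}v_{\e,N}$, which already solves the single-inclusion Stokes problem, and the decomposition $u_{\e,N}^n=v_{\e,N}^n+w_{\e,N;0}^n+w_{\e,N;1}^n+e_{\e,N}^n$ (see~\eqref{eq:decomp-ueNm}) then isolates $V_f^\circ$ from $w_{\e,N;1}^n$ and $\Omega^\circ$ from $w_{\e,N;0}^n$ by direct comparison with~\eqref{eq:eqn-uRE0} and~\eqref{eq:eqn-defin-ugamma}. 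Your route can be salvaged along the same lines, but you must group the diagonal propulsion and diagonal boundary-stress terms together and recognize their sum as (the average of) the single-particle solution $w_{\e,N;0}^n+w_{\e,N;1}^n$, rather than attempting to read off $V_f^\circ$ and $\Omega^\circ$ from the free Stokeslet alone.
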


\begin{rem}[Higher-order corrections and failure of mean-field theory]\label{rem:noMFL}
These dilute expansions can be easily pursued to higher order. For instance, the next-order expansion of the translational velocity $V_{\e,N}^n$ would involve the three-body contribution
\begin{equation}\label{eq:3body-contr}
\lambda^2\Big[\nabla\Gc\,\hat\ast\,\Big(2\langle\Sigma^\circ\nu_{\e,N}\rangle\!\D\!\big(\nabla\Gc\,\hat\ast\,(2\langle\Sigma^\circ\nu_{\e,N}\rangle\!\D(\Gc\ast h))\big)\Big)\Big](X_{\e,N}^n),
\end{equation}
which describes the flow disturbance due to a stress difference at a particle boundary generated by the flow disturbance due to a stress difference at another particle boundary. The higher-order expansion of particle velocities can however not be used to improve on the accuracy in the mean-field approximation.
Indeed, the above three-body contribution~\eqref{eq:3body-contr} involves particle interactions via the kernel $\nabla^2\Gc$, which has critical decay $|\nabla^2\Gc(x)|\simeq|x|^{-d}$,
and the criticality of this kernel is known to imply the failure of mean-field approximations: by scaling, any macroscopic limit should still depend on the microscopic arrangement of the particles. We refer to~\cite{Oelschlager-90,Jabin-14} for this classical limitation to mean-field theory. For particle suspensions, this was recently illustrated by Höfer, Mecherbet, and Schubert in~\cite{Hofer-Mecherbet-Schubert-22}.
More precisely, the $O(\lambda^2)$ correction to the dilute mean-field approximation should involve the statistical distribution of pairs of particles on the microscale --- as was indeed anticipated in the introduction, in link with corrections to Einstein's formula~\eqref{eq:Einstein}, cf.~\cite{DG-21b,GV-21}. This leads us beyond the scope of propagation of chaos and mean-field theory. (We emphasize that it is not about the two-particle {\it macroscopic} density, which commonly appears when describing corrections to mean field, e.g.~\cite{D-21a}: instead, it is here about the distribution of pairs of particles {\it on the microscale}.)
\end{rem}

\subsection{Cluster expansion}
In view of~\eqref{eq:VTheta}, Proposition~\ref{prop:dilute-exp} amounts to establishing a dilute expansion for the fluid velocity $u_{\e,N}$. The latter is defined as the solution of the Stokes problem~\eqref{eq:main-Stokes}, which, in view of its weak formulation~\eqref{eq:main-Stokes-weak}, is equivalent to setting
\[u_{\e,N}\,=\,\pi_{\e,N}v_{\e,N},\]
where $\pi_{\e,N}$ is the orthogonal projection
\[\pi_{\e,N}~:~\big\{u\in\dot H^1(\R^d)^d\,:\,\Div(u)=0\big\}~\twoheadrightarrow~\big\{u\in \dot H^1(\R^d)^d\,:\,\Div(u)=0,~\D(u)|_{\Ic_{\e,N}}=0\big\},\]
and where $v_{\e,N}$ is the solution of
\begin{equation}\label{eq:veps}
\left\{\begin{array}{ll}
-\triangle v_{\e,N}+\nabla q_{\e,N}=h\mathds1_{\R^d\setminus\Ic_{\e,N}}+e\mathds1_{\Ic_{\e,N}}+\kappa_0\tfrac{\lambda}{\e N}\sum_{n=1}^Nf_{\e,N}^n,&\text{in $\R^d$},\\
\Div(v_{\e,N})=0,&\text{in $\R^d$}.
\end{array}\right.
\end{equation}
This decomposition is particularly useful as $v_{\e,N}$ depends linearly on the set of particles: this simplifies the hydodynamic problem to pairwise interactions between the particles, while all multibody effects are contained in the projection~$\pi_{\e,N}$.
The dilute expansion of~$u_{\e,N}$ then amounts to expanding $\pi_{\e,N}$.
For that purpose, rather than appealing to the method of reflections, as was done in previous work on the topic, e.g.~\cite{Jabin-Otto-04,Hofer-18,Mecherbet-19,Hofer-21}, we start from its cluster expansion as inspired by our work with Gloria~\cite{DG-16a,DG-21b,DG-22-review}.

In a nutshell, the cluster expansion of~$\pi_{\e,N}$ amounts to decomposing multibody effects into a series of contributions involving subsets of particles of increasing cardinality.
We start with some notation. For any index subset $J\subset\{1,\ldots,N\}$, we define the orthogonal projection
\[\pi_{\e,N}^J~:~\big\{u\in\dot H^1(\R^d)^d:\Div(u)=0\big\}~\twoheadrightarrow~\big\{u\in \dot H^1(\R^d)^d:\Div(u)=0,~\D(u)|_{\cup_{n\in J}I^n_{\e,N}}=0\big\},\]
that is, the partial projection only taking into account particles with indices in $J$.
Note that by definition we have in particular $\pi_{\e,N}^\varnothing=\Id$ and $\pi_{\e,N}^{\{1,\ldots,N\}}=\pi_{\e,N}$.
Next, we consider differences of partial projections,
\[\delta^{\{n\}}\pi_{\e,N}^J\,:=\,\pi_{\e,N}^{J\cup\{n\}}-\pi_{\e,N}^J,\qquad 1\le n\le N,\]
as well as higher-order differences,
\[\delta^K\pi_{\e,N}^J\,:=\,\sum_{I\subset K}(-1)^{\sharp(K\setminus I)}\pi_{\e,N}^{J\cup I},\qquad K\subset\{1,\ldots,N\}.\]
Note that by definition we have
\[\delta^\varnothing\pi_{\e,N}^J=\pi_{\e,N}^J,\qquad\text{and}\qquad \delta^K\pi_{\e,N}^J=0\quad\text{whenever $K\cap J\ne\varnothing$.}\]
In these terms, the cluster expansion of $u_{\e,N}=\pi_{\e,N}v_{\e,N}$ takes on the following guise, where the~$k$th term $u_{\e,N}^{(k)}$ describes the contribution of $k$-body interactions.

\begin{lem}[Cluster expansion]
The following identity holds,
\begin{equation}\label{eq:cluster-u}
u_{\e,N}\,=\,\sum_{k=0}^Nu_{\e,N}^{(k)},
\end{equation}
where we have defined $u_{\e,N}^{(0)}:=v_{\e,N}$ and for all $k\ge1$,
\begin{equation}\label{eq:cluster-u(n)}
u_{\e,N}^{(k)}\,:=\,\sum_{1\le n_1<\ldots<n_k\le N}(\delta^{\{n_1,\ldots,n_k\}}\pi_{\e,N}^\varnothing)v_{\e,N}.\qedhere
\end{equation}
\end{lem}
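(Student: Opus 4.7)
The plan is to recognise that this is a purely combinatorial identity about the projections $\pi_{\e,N}^J$, with no analytic input beyond the definitions given above. Since $u_{\e,N} = \pi_{\e,N}^{\{1,\ldots,N\}}v_{\e,N}$ depends linearly on $v_{\e,N}$, I reduce the claim to the operator identity
\[
\pi_{\e,N}^{\{1,\ldots,N\}} \,=\, \sum_{K\subset\{1,\ldots,N\}}\delta^K\pi_{\e,N}^\varnothing.
\]
Grouping the right-hand side by the cardinality $k=\sharp K$, and recalling that $\delta^\varnothing\pi_{\e,N}^\varnothing = \pi_{\e,N}^\varnothing = \Id$, one reproduces exactly~\eqref{eq:cluster-u}--\eqref{eq:cluster-u(n)}, the $k=0$ term contributing $u_{\e,N}^{(0)}=v_{\e,N}$.

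The verification is a one-line inclusion--exclusion. Substituting the definition $\delta^K\pi_{\e,N}^\varnothing = \sum_{I\subset K}(-1)^{\sharp(K\setminus I)}\pi_{\e,N}^I$ and swapping the order of summation yields
\[
\sum_{K\subset\{1,\ldots,N\}}\delta^K\pi_{\e,N}^\varnothing \,=\, \sum_{I\subset\{1,\ldots,N\}}\pi_{\e,N}^I \sum_{K:\,I\subset K\subset\{1,\ldots,N\}}(-1)^{\sharp(K\setminus I)}.
\]
Reparametrising the inner sum by $L := K\setminus I\subset \{1,\ldots,N\}\setminus I$ gives $\sum_L(-1)^{\sharp L} = (1-1)^{N-\sharp I}$, which vanishes unless $I = \{1,\ldots,N\}$ and equals~$1$ in that case. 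Only the single term $I=\{1,\ldots,N\}$ survives, yielding $\pi_{\e,N}^{\{1,\ldots,N\}}$ as required.

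Equivalently, and perhaps more structurally, one may argue by induction on $N$ via the one-step telescoping $\pi_{\e,N}^{\{1,\ldots,N\}} = \pi_{\e,N}^{\{1,\ldots,N-1\}} + \delta^{\{N\}}\pi_{\e,N}^{\{1,\ldots,N-1\}}$ together with the elementary relation $\delta^{\{N\}}\delta^K\pi_{\e,N}^\varnothing = \delta^{K\cup\{N\}}\pi_{\e,N}^\varnothing$ for $N\notin K$, which merges the two sums produced at the inductive step into a single sum over all subsets of~$\{1,\ldots,N\}$. There is essentially no obstacle in either proof: this lemma is pure bookkeeping, and the substantive analytic work of the paper only starts once one must estimate the individual $k$-body contributions~$u_{\e,N}^{(k)}$ in the dilute, well-separated regime.
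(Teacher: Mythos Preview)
Your proof is correct and follows essentially the same approach as the paper: both reduce to the operator identity $\pi_{\e,N}^{\{1,\ldots,N\}}=\sum_{K}\delta^K\pi_{\e,N}^\varnothing$, expand via the definition of $\delta^K$, swap the order of summation, and invoke the binomial identity $\sum_{k=0}^{M}(-1)^k\binom{M}{k}=\mathds1_{M=0}$ to isolate the full set $I=\{1,\ldots,N\}$. Your additional remark on the inductive/telescoping alternative is a nice aside but not needed.
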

\begin{proof}
Identity~\eqref{eq:cluster-u} follows from the simple observation that
\begin{eqnarray*}
\sum_{k=0}^N\,\sum_{1\le n_1<\ldots<n_k\le N}\delta^{\{n_1,\ldots,n_k\}}\pi_{\e,N}^\varnothing
&=&\sum_{K\subset\{1,\ldots,N\}}\delta^K\pi_{\e,N}^\varnothing\\
&=&\sum_{K\subset\{1,\ldots,N\}}\sum_{I\subset K}(-1)^{\sharp (K\setminus I)}\pi_{\e,N}^I\\
&=&\sum_{I\subset\{1,\ldots,N\}}\pi_{\e,N}^I\sum_{k=0}^{N-\sharp I}(-1)^{k}\binom{N-\sharp I}{k}\\
&=&\pi_{\e,N}^{\{1,\ldots,N\}}~=~\pi_{\e,N},
\end{eqnarray*}
where we use the fact that $\sum_{k=0}^K(-1)^k\binom{K}k=\mathds1_{K=0}$.
\end{proof}

\subsection{Cluster expansion errors}
We turn to the accuracy of the cluster expansion~\eqref{eq:cluster-u} upon truncation in the dilute regime $\lambda\ll1$ (note that the condition $\lambda\alpha_{\e,N}^0\ll1$ amounts to considering well-separated particles). For the purpose of this work, we restrict ourselves to the second-order expansion, but higher orders can be dealt with analogously, cf.~Remark~\ref{rem:higher-cluster-trunc}.

\begin{lem}[Cluster expansion errors]\label{lem:cluster-trunc}
Assume that $\dd_{\e,N}^{\min}\ge4\e$ and that $\lambda\alpha_{\e,N}^0\ll1$ is small enough.
Then we have
\begin{eqnarray}
\|u_{\e,N}-u_{\e,N}^{(0)}\|_{\Ld^{\infty}(\R^d)}&\lesssim_h&\lambda\alpha_{\e,N}^1+\e,\label{eq:approx-0th-u}\\
\|u_{\e,N}-u_{\e,N}^{(0)}-u_{\e,N}^{(1)}\|_{\Ld^{\infty}(\R^d)}&\lesssim_h&\lambda\alpha_{\e,N}^0(\lambda\alpha_{\e,N}^1+\e),\label{eq:approx-1st-u}
\end{eqnarray}
and moreover, for velocity gradients, for all $1\le n\le N$,
\begin{equation}\label{eq:approx-0th-nabu}
\big\|\nabla u_{\e,N}-\nabla\pi^{\{n\}}_{\e,N}u^{(0)}_{\e,N}\big\|_{\Ld^{\infty}(B(X_{\e,N}^n,\frac12\dd_{\e,N}^{\min}))}\,\lesssim_h\,\lambda\alpha_{\e,N}^0.\qedhere
\end{equation}
\end{lem}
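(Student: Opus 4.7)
The plan is to reduce the cluster expansion~\eqref{eq:cluster-u}--\eqref{eq:cluster-u(n)} to pointwise bounds on the single-particle correction $(\pi^{\{n\}}_{\e,N}-\Id)v$. For any divergence-free $v\in\dot H^1(\R^d)^d$, this correction is the weak solution of a Stokes problem that rigidifies the single particle $I_{\e,N}^n$ in the ambient velocity~$v$. Using the single-inclusion Stokeslet of Lemma~\ref{lem:Green-rig} and the trace estimate of Lemma~\ref{lem:trace}, I expect the two-scale bounds
\[|(\pi^{\{n\}}_{\e,N}-\Id)v|(x)\,\lesssim\,\|\!\D(v)\|_{\Ld^\infty(I^{n;+}_{\e,N})}\cdot\e^d\big(\e\vee|x-X_{\e,N}^n|\big)^{1-d},\]
\[|\nabla(\pi^{\{n\}}_{\e,N}-\Id)v|(x)\,\lesssim\,\|\!\D(v)\|_{\Ld^\infty(I^{n;+}_{\e,N})}\cdot\e^d\big(\e\vee|x-X_{\e,N}^n|\big)^{-d},\]
reflecting the stresslet nature of the single-inclusion flow disturbance: of order~$O(\e)$ on the particle and Stokeslet-gradient decay far from it.

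For~\eqref{eq:approx-0th-u}, I would write $u_{\e,N}-u_{\e,N}^{(0)}=(\pi_{\e,N}-\Id)v_{\e,N}$ and decompose the contribution of each particle according to distance: the nearest-particle contribution is $\lesssim\e\,\|\!\D(v_{\e,N})\|_{\Ld^\infty}$, while the far-particle contributions sum via~\eqref{eq:bnd-x-alpha} to $\lambda\alpha_{\e,N}^1\,\|\!\D(v_{\e,N})\|_{\Ld^\infty}$. The Lipschitz estimate of Proposition~\ref{prop:Lip-est}, which by the very same argument yields $\|\!\D(v_{\e,N})\|_{\Ld^\infty}\lesssim_h 1$, closes the estimate. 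The local gradient bound~\eqref{eq:approx-0th-nabu} is analogous: since $u_{\e,N}-\pi_{\e,N}^{\{n\}}u_{\e,N}^{(0)}=(\pi_{\e,N}-\pi_{\e,N}^{\{n\}})v_{\e,N}$, subtracting $\pi_{\e,N}^{\{n\}}u_{\e,N}^{(0)}$ absorbs the non-decaying primary disturbance at particle~$n$ itself, so on $B(X_{\e,N}^n,\frac12\dd_{\e,N}^{\min})$ what remains is the gradient of the rigid corrections at particles $m\ne n$, each bounded by $\e^d|X_{\e,N}^n-X_{\e,N}^m|^{-d}\|\!\D(v_{\e,N})\|_{\Ld^\infty}$, and summed to $\lambda\alpha_{\e,N}^0$ via~\eqref{eq:def-alpha}.

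For~\eqref{eq:approx-1st-u}, the plan is to iterate the single-particle analysis on cluster increments. Morally, the two-body increment $\delta^{\{n,m\}}\pi_{\e,N}^\varnothing v_{\e,N}$ describes the rigid response at particle~$n$ to the single-particle disturbance already generated at particle~$m$: the gradient bound above applied at $X_{\e,N}^n$ yields an effective ambient strain of size $\lesssim\e^d|X_{\e,N}^n-X_{\e,N}^m|^{-d}\|\!\D(v_{\e,N})\|_{\Ld^\infty}$, and inserting this back into the single-particle bound gives
\[|\delta^{\{n,m\}}\pi_{\e,N}^\varnothing v_{\e,N}|(x)\,\lesssim_h\,\e^d|X_{\e,N}^n-X_{\e,N}^m|^{-d}\cdot\e^d\big(\e\vee|x-X_{\e,N}^n|\big)^{1-d}.\]
Summation over pairs $(n,m)$, again with the nearest-/far-particle split, produces the product $\lambda\alpha_{\e,N}^0\cdot(\lambda\alpha_{\e,N}^1+\e)$, and the smallness $\lambda\alpha_{\e,N}^0\ll1$ absorbs the geometric series of higher-order cluster increments $(k\ge3)$, yielding~\eqref{eq:approx-1st-u}.

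The main obstacle I anticipate is the rigorous justification of these pointwise two-scale bounds for cluster increments $\delta^K\pi_{\e,N}^\varnothing v$ with $|K|\ge 2$ in the presence of the other rigid inclusions. The partial projections $\pi_{\e,N}^J$ do not commute, so a naive reflection-type rewriting yields only a formal series, whose convergence must be secured via an energy estimate coupled with the Stokeslet decay of Lemma~\ref{lem:Green-rig} and the trace estimate of Lemma~\ref{lem:trace}; arranging for $\lambda\alpha_{\e,N}^0\ll1$ to absorb the resulting geometric series is where the bulk of the technical work lies. Once the single-particle two-scale bound is available in the presence of the other particles, the remaining summations reduce to purely combinatorial manipulations of $\alpha_{\e,N}^\sigma$ via~\eqref{eq:Holder-alpha}.
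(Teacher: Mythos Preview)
Your plan amounts to the method of reflections: bound each cluster increment by iterating single-particle two-scale estimates, then sum the geometric series in $\lambda\alpha_{\e,N}^0$. This can be made to work, but the paper's route is shorter and sidesteps precisely the obstacle you flag in your last paragraph.

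The paper works directly with the \emph{exact} remainders, letting the full solution $u_{\e,N}$ --- whose Lipschitz bound is already secured by Proposition~\ref{prop:Lip-est} --- drive the estimates. For~\eqref{eq:approx-0th-u}, subtracting~\eqref{eq:veps} from~\eqref{eq:eqn-pseudo-ex} yields a global Stokes equation for $u_{\e,N}-u_{\e,N}^{(0)}$ whose only sources are the boundary stresses $\sigma(u_{\e,N},p_{\e,N})\nu$ at each $\partial I_{\e,N}^n$; the Stokeslet representation and Lemma~\ref{lem:trace} then give
\[|(u_{\e,N}-u_{\e,N}^{(0)})(x)|\,\lesssim\,\tfrac\lambda N\sum_n\langle x-X_{\e,N}^n\rangle_\e^{1-d}\Big(\fint_{I_{\e,N}^{n;+}}|\!\D(u_{\e,N})|^2\Big)^{\frac12}\,\lesssim_h\,\lambda\alpha_{\e,N}^1+\e,\]
the last step being Proposition~\ref{prop:Lip-est}. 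Note that the driving quantity is $\D(u_{\e,N})$, not $\D(v_{\e,N})$ as in your sketch; your ``decomposition by particle'' of $(\pi_{\e,N}-\Id)v_{\e,N}$ with single-particle bounds in $\|\!\D(v_{\e,N})\|_{\Ld^\infty}$ is not directly available, since $(\pi_{\e,N}-\Id)v_{\e,N}\ne\sum_n(\pi_{\e,N}^{\{n\}}-\Id)v_{\e,N}$. For~\eqref{eq:approx-1st-u} the same idea applies one level deeper: the remainder $u_{\e,N}-u_{\e,N}^{(0)}-u_{\e,N}^{(1)}$ has boundary sources $\sigma(u_{\e,N}-u_{\e,N}^n,\cdot)\nu$ at $\partial I_{\e,N}^n$, and $\D(u_{\e,N}-u_{\e,N}^n)$ near particle~$n$ is estimated by representing $u_{\e,N}-u_{\e,N}^n$ via the single-inclusion Stokeslet $\Gc_{\e,N}^n$ of Lemma~\ref{lem:Green-rig}, which again produces $\D(u_{\e,N})$ at the other particles and hence a factor $\lambda\alpha_{\e,N}^0$. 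Estimate~\eqref{eq:approx-0th-nabu} follows from the same single-inclusion representation. No geometric series, no pointwise bounds on higher cluster increments ``in the presence of other particles'' are needed: Proposition~\ref{prop:Lip-est} absorbs all of that work.
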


\begin{rem}\label{rem:higher-cluster-trunc}
The proof below can be immediately generalized to higher orders: we can show for all $1\le K< N$,
\[\Big\|u_{\e,N}-\sum_{k=0}^Ku_{\e,N}^{(k)}\Big\|_{\Ld^\infty(\R^d)}\,\lesssim_{h,K}\,(\lambda\alpha_{\e,N}^0)^K(\lambda\alpha_{\e,N}^1+\e),\]
and moreover, for velocity gradients, for all $1\le n\le N$,
\[\Big\|\nabla u_{\e,N}-\nabla\pi_{\e,N}^{\{n\}}\sum_{k=0}^K u_{\e,N}^{(k)}\Big\|_{\Ld^{\infty}(B(X_{\e,N}^n,\frac12\dd_{\e,N}^{\min}))}\,\lesssim_{h,K}\,(\lambda\alpha_{\e,N}^0)^{K+1}.\]
As this will not be used in this work, we omit the detail for shortness.
\end{rem}

\begin{proof}[Proof of Lemma~\ref{lem:cluster-trunc}]
We split the proof into three steps, separately proving the different estimates in the statement.

\medskip
\step1 First-order expansion of $u_{\e,N}$: proof of~\eqref{eq:approx-0th-u}.\\
Subtracting~\eqref{eq:eqn-pseudo-ex} from the defining equation for $u_{\e,N}^{(0)}=v_{\e,N}$, cf.~\eqref{eq:veps}, we get the following equation in $\R^d$,
\begin{equation}\label{eq:rel-u-u0}
-\triangle( u_{\e,N}- u_{\e,N}^{(0)})+\nabla p
\,=\,-\sum_{n=1}^N\Big( e\mathds1_{I^n_{\e,N}}+\kappa_0\tfrac{\lambda}{\e N}f_{\e,N}^n\mathds1_{I^n_{\e,N}}
+\delta_{\partial I_{\e,N}^n}\sigma(u_{\e,N},p_{\e,N})\nu\Big).
\end{equation}
In terms of the Stokeslet $\Gc$, using the boundary conditions for $u_{\e,N}$, we deduce
\begin{equation*}
( u_{\e,N}- u_{\e,N}^{(0)})(x)
\,=\,-\sum_{n=1}^N\int_{\partial I_{\e,N}^n}\Big(\Gc(\cdot-x)-\fint_{I_{\e,N}^n}\Gc(\cdot-x)\Big)\sigma(u_{\e,N},p_{\e,N})\nu.
\end{equation*}
Appealing to a trace estimate and to pointwise bounds on $\Gc$, cf.~Lemmas~\ref{lem:trace} and~\ref{lem:Green}, and evalutating local integrals, this leads us to
\begin{equation*}
|(u_{\e,N}-u_{\e,N}^{(0)})(x)|\,\lesssim\,\tfrac\lambda N\sum_{n=1}^N \langle x-X_{\e,N}^n\rangle_\e^{1-d}\Big(\fint_{I_{\e,N}^{n;+}}|\!\D(u_{\e,N})|^2\Big)^\frac12,
\end{equation*}
where we recall the short-hand notation $\langle x\rangle_\e=(\e^2+|x|^2)^{1/2}$.
Now note that~\eqref{eq:bnd-x-alpha} yields for all $x\in\R^d$, after separating the diagonal contribution,
\begin{equation}\label{eq:bnd-x-alpha-re}
\tfrac\lambda N\sum_{n=1}^N \langle x-X_{\e,N}^n\rangle_\e^{1-d}\,\lesssim\,\lambda\alpha_{\e,N}^1+\e.
\end{equation}
Inserting this into the above, and combining with the Lipschitz estimate of Proposition~\ref{prop:Lip-est}, we deduce for all $x\in\R^d$,
\begin{equation*}
|(u_{\e,N}-u_{\e,N}^{(0)})(x)|\,\lesssim_h\,\lambda\alpha_{\e,N}^1+\e,
\end{equation*}
and the conclusion~\eqref{eq:approx-0th-u} follows.

\medskip
\step2 Second-order expansion of $u_{\e,N}$: proof of~\eqref{eq:approx-1st-u}.\\
We use the short-hand notation $u_{\e,N}^n:=\pi_{\e,N}^{\{n\}}v_{\e,N}$ and we denote by $p_{\e,N}^n$ the associated pressure field in $\R^d\setminus I_{\e,N}^n$.
Comparing~\eqref{eq:rel-u-u0} with the corresponding equation for
\[u_{\e,N}^{(1)}\,=\,\sum_{n=1}^N(u_{\e,N}^n-v_{\e,N}),\]
we obtain the following in $\R^d$,
\begin{equation*}
-\triangle\big(u_{\e,N}-u_{\e,N}^{(0)}-u_{\e,N}^{(1)}\big)+\nabla p\,=\,-\sum_{n=1}^N\mathds1_{\partial I_{\e,N}^n}\sigma\big(u_{\e,N}-u_{\e,N}^n,p_{\e,N}-p_{\e,N}^n\big)\nu.
\end{equation*}
In terms of the Stokeslet $\Gc$, using boundary conditions, and appealing to a trace estimate and to pointwise bounds, we deduce as in Step~1,
\begin{equation}\label{eq:u-u0u1-appr0}
\big|\big(u_{\e,N}-u_{\e,N}^{(0)}-u_{\e,N}^{(1)}\big)(x)\big|\,\lesssim\,\tfrac\lambda N\sum_{n=1}^N\langle x-X_{\e,N}^n\rangle_\e^{1-d}\Big(\fint_{I_{\e,N}^{n;+}}|\!\D(u_{\e,N}-u_{\e,N}^n)|^2\Big)^\frac12.
\end{equation}
It remains to estimate the last factor.
For that purpose, given $1\le n\le N$, we start by noting that the difference $u_{\e,N}-u_{\e,N}^n$ satisfies the following equation in $\R^d\setminus I_{\e,N}^n$,
\begin{equation*}
-\triangle\big(u_{\e,N}-u_{\e,N}^n\big)+\nabla p\,=\,-\sum_{m:m\ne n}^N\Big(e\mathds1_{I_{\e,N}^m}+\kappa_0\tfrac{\lambda}{\e N}f_{\e,N}^m\mathds1_{I_{\e,N}^m}+\delta_{\partial I_{\e,N}^m}\sigma(u_{\e,N},p_{\e,N})\nu\Big).
\end{equation*}
Testing this with the Stokeslet~$\Gc^n_{\e,N}(\cdot,x)$ corresponding to the problem with a single rigid inclusion at $I_{\e,N}^n$, cf.~Lemma~\ref{lem:Green-rig}, and using boundary conditions, we get in $\R^d\setminus I_{\e,N}^n$,
\begin{equation}\label{eq:Greenrep-u-un}
(u_{\e,N}-u_{\e,N}^n)(x)\,=\,-\sum_{m:m\ne n}^N\int_{\partial I_{\e,N}^m}\Big(\Gc_{\e,N}^n(\cdot,x)-\fint_{I_{\e,N}^m}\Gc_{\e,N}^n(\cdot,x)\Big)\,\sigma(u_{\e,N},p_{\e,N})\nu,
\end{equation}
hence, appealing to a trace estimate and to the pointwise bounds on $\Gc^n_{\e,N}$, cf.~Lemmas~\ref{lem:trace} and~\ref{lem:Green-rig},
\[\Big(\fint_{I_{\e,N}^n}|\!\D(u_{\e,N}-u_{\e,N}^n)|^2\Big)^\frac12\,\lesssim\,\tfrac\lambda N\sum_{m:m\ne n}^N|X_{\e,N}^n-X_{\e,N}^m|^{-d}\Big(\fint_{I_{\e,N}^{m;+}}|\!\D(u_{\e,N})|^2\Big)^\frac12.\]
Inserting this into~\eqref{eq:u-u0u1-appr0}, combining with the Lipschitz estimate of Proposition~\ref{prop:Lip-est}, and using~\eqref{eq:bnd-x-alpha-re} again, we get for all $x\in\R^d$,
\begin{equation*}
\big|\big(u_{\e,N}-u_{\e,N}^{(0)}-u_{\e,N}^{(1)}\big)(x)\big|\,\lesssim_h\,\lambda\alpha_{\e,N}^0(\lambda\alpha_{\e,N}^1+\e),
\end{equation*}
and the conclusion~\eqref{eq:approx-1st-u} follows.

\medskip
\step3 First-order expansion  of $\nabla u_{\e,N}$: proof of~\eqref{eq:approx-0th-nabu}.\\
Let $x\in\R^d\setminus\Ic_{\e,N}$ be fixed with $|x-X_{\e,N}^n|\le\frac12\dd_{\e,N}^{\min}$ for some $1\le n\le N$.
Starting from~\eqref{eq:Greenrep-u-un} and appealing again to a trace estimate and to the pointwise bounds on $\Gc^n_{\e,N}$, cf.~Lemmas~\ref{lem:trace} and~\ref{lem:Green-rig}, we get
\begin{equation*}
|\nabla(u_{\e,N}-u^n_{\e,N})(x)|
\,\lesssim\,
\tfrac\lambda N\sum_{m:m\ne n}^N|X_{\e,N}^n-X_{\e,N}^m|^{-d}\Big(\fint_{I_{\e,N}^{m;+}}|\!\D(u_{\e,N})|^2\Big)^\frac12,
\end{equation*}
and the conclusion~\eqref{eq:approx-0th-nabu} then follows as above.
\end{proof}

\subsection{Analysis of cluster terms}\label{sec:proof-prop-cluster}
In order to conclude the proof of Proposition~\ref{prop:dilute-exp} and obtain the desired asymptotics for particle velocities~\eqref{eq:VTheta},
we build on the cluster estimates of Lemma~\ref{lem:cluster-trunc}
by further performing a multipole expansion of the cluster terms in the limit of small well-separated particles $\e\ll1$.
For the purpose of this work, we restrict ourselves to the first two cluster terms and to their monopole approximation, but the description of higher-order cluster terms and their full multipole expansion could be pursued analogously.
The conclusion of Proposition~\ref{prop:dilute-exp} directly follows by combining the following result with Lemma~\ref{lem:cluster-trunc}, further using~\eqref{eq:Holder-alpha} to slightly simplify the bounds.

\begin{lem}[Monopole approximation of cluster terms]\label{lem:monopole}
Assume that $\dd_{\e,N}^{\min}\ge4\e$ and that $\lambda\alpha_{\e,N}^0\ll1$ is small enough.
Then we have
\begin{eqnarray}
\Big|\fint_{I_{\e,N}^n}\!\!u_{\e,N}^{(0)}-(\Gc\ast h)(X_{\e,N}^n)\Big|
\!\!&\lesssim_h&\!\!\!\lambda(\alpha^2_{\e,N}+\kappa_0\alpha^1_{\e,N})+\e(\e+\kappa_0),\label{eq:exp-u0}\\
\hspace{-0.9cm}\Big|\fint_{I_{\e,N}^n}\!\!\nabla \pi^{\{n\}}_{\e,N}u^{(0)}_{\e,N}-\Omega^\circ(R_{\e,N}^n)\nabla(\Gc\ast h)(X_{\e,N}^n)\Big|
\!\!&\lesssim_h&\!\!\!\lambda(\alpha_{\e,N}^1+\kappa_0\alpha^0_{\e,N})+\e,\label{eq:exp-nabu0}
\end{eqnarray}
and moreover,
\begingroup\allowdisplaybreaks
\begin{multline}\label{eq:exp-u1}
\bigg|\fint_{I_{\e,N}^n}\!\!\big(u^{(0)}_{\e,N}+u^{(1)}_{\e,N}\big)
-\big[\Gc\,\hat\ast\,\big((1-\lambda\mu_{\e,N})h+\lambda\mu_{\e,N}e\big)\big](X_{\e,N}^n)\\
-\lambda\big[\nabla\Gc\,\hat\ast\,\big(2\langle\Sigma^\circ\nu_{\e,N}\rangle\!\D(\Gc\ast h)+\kappa_0\langle\Sigma_f^\circ\nu_{\e,N}\rangle\big)\big](X_{\e,N}^n)
-\kappa_0\e V^\circ_f(R_{\e,N}^n)\bigg|\\
\,\lesssim_h\,(\lambda\alpha_{\e,N}^1+\e)\big(\lambda(\alpha_{\e,N}^0+1)+\e\big),
\end{multline}
\endgroup
where we recall that $\Sigma^\circ,\Omega^\circ,\Sigma^\circ_f,V^\circ_f$ are defined in~\eqref{eq:def-S(r)}--\eqref{eq:def-act-drag}.
Moreover, away from the particles, the fluid velocity satisfies for any boundary-layer thickness $\delta\in[\e,1]$,
\begin{equation}\label{eq:estim-u0u1}
\|u_{\e,N}^{(0)}-\Gc\ast h\|_{\Ld^\infty(\R^d\setminus(\Ic_{\e,N}+\delta B))}\,\lesssim_h\,\lambda(\alpha_{\e,N}^2+\kappa_0\alpha_{\e,N}^1)+\delta(\delta+\kappa_0)(\tfrac\e\delta)^d.
\end{equation}
and
\begin{align}
&\Big\|\big(u^{(0)}_{\e,N}+u^{(1)}_{\e,N}\big)-\Gc\ast \big((1-\lambda\mu_{\e,N})h+\lambda\mu_{\e,N}e\big)\label{eq:estim-u0u1-bis}\\
&\hspace{2cm}-\lambda\nabla\Gc\ast\big(2\langle\Sigma^\circ\nu_{\e,N}\rangle\!\D(\Gc\ast h)+\kappa_0\langle\Sigma_f^\circ\nu_{\e,N}\rangle\big)\Big\|_{\Ld^\infty(\R^d\setminus(\Ic_{\e,N}+\delta B))}\nonumber\\
&\hspace{7cm}\,\lesssim_h\,\lambda(\lambda\alpha_{\e,N}^1+\e)(\alpha_{\e,N}^0+1)+\e(\tfrac\e\delta)^d.\nonumber\qedhere
\end{align}
\end{lem}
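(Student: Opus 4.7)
The plan is to represent $u_{\e,N}^{(0)}=v_{\e,N}$ and each partial projection $u_{\e,N}^n=\pi_{\e,N}^{\{n\}}v_{\e,N}$ via the free-space Stokeslet $\Gc$, then perform a multipole (Taylor) expansion around each particle center in the small-particle limit $\e\ll1$, retaining only the leading monopole and dipole contributions and absorbing the remainder into the stated errors. The quantitative workhorse is the decay of the Stokeslet, $|\Gc(x)|\lesssim|x|^{2-d}$ and $|\nabla\Gc(x)|\lesssim|x|^{1-d}$, combined with the counting bound~\eqref{eq:bnd-x-alpha} via $\alpha_{\e,N}^\sigma$, which allows sums over particles to be controlled by $\lambda\alpha_{\e,N}^\sigma$ plus diagonal contributions of size $\e^\sigma$.

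For~\eqref{eq:exp-u0} and~\eqref{eq:estim-u0u1}, I would start from the explicit representation
\[v_{\e,N}=\Gc\ast\Big(h\mathds1_{\R^d\setminus\Ic_{\e,N}}+e\mathds1_{\Ic_{\e,N}}+\kappa_0\tfrac{\lambda}{\e N}\textstyle\sum_nf_{\e,N}^n\Big),\]
and split it into four contributions. The bulk term $\Gc\ast h$ yields $(\Gc\ast h)(X_{\e,N}^n)$ up to $O(\e)$ by Lipschitz regularity; the indicator correction together with the buoyancy term have size $|e|\sum_m|I_{\e,N}^m||\Gc(y-X_{\e,N}^m)|\lesssim\lambda\alpha_{\e,N}^2+\e^2$; and the self-propulsion term uses the local force balance~\eqref{eq:local-balance} to promote off-diagonal entries to dipoles with decay $|\nabla\Gc|\lesssim|x|^{1-d}$, giving $\lesssim\kappa_0\lambda\alpha_{\e,N}^1$, while the diagonal contribution ($m=n$) is $O(\kappa_0\e)$ by direct rescaling. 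For~\eqref{eq:exp-nabu0}, Taylor-expand $v_{\e,N}$ near $X_{\e,N}^n$ as $v_{\e,N}(X_{\e,N}^n)+\nabla v_{\e,N}(X_{\e,N}^n)(y-X_{\e,N}^n)+O(\e^2)$ and exploit linearity of $\pi_{\e,N}^{\{n\}}$: constants are preserved, while an affine field $H(y-X_{\e,N}^n)$ is mapped to $H(y-X_{\e,N}^n)+\e u^\circ_{R_{\e,N}^n,H^\Sym}((y-X_{\e,N}^n)/\e)$ by the defining single-particle problem~\eqref{eq:eqn-uRE0}. Averaging the gradient over $I_{\e,N}^n$ and using the definition~\eqref{eq:defin-Omega0} of $\Omega^\circ$ gives the claim, with $\nabla v_{\e,N}(X_{\e,N}^n)\simeq\nabla(\Gc\ast h)(X_{\e,N}^n)$ controlled as above.

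The heart of the proof is~\eqref{eq:exp-u1} (and its fluid counterpart~\eqref{eq:estim-u0u1-bis}). Writing $u_{\e,N}^{(1)}=\sum_n(u_{\e,N}^n-v_{\e,N})$, each term $u_{\e,N}^n-v_{\e,N}$ solves by Lemma~\ref{lem:eqn-pseudo} a Stokes equation whose source is the boundary-force layer $-\delta_{\partial I_{\e,N}^n}\sigma(u_{\e,N}^n,p_{\e,N}^n)\nu$ combined with the interior contributions $-e\mathds1_{I_{\e,N}^n}$ and $-\kappa_0\tfrac\lambda{\e N}f_{\e,N}^n$. In the Green representation, Taylor-expand $\Gc(y-z)$ around $y-X_{\e,N}^n$ for $z$ near $I_{\e,N}^n$: the monopole term cancels by the force balance in~\eqref{eq:main-Stokes}, and the leading dipole contribution is
\[-\nabla\Gc(y-X_{\e,N}^n):\int_{\partial I_{\e,N}^n}(z-X_{\e,N}^n)\otimes\sigma(u_{\e,N}^n,p_{\e,N}^n)\nu\;+\;\text{(propulsion dipole)}.\]
Rescaling to unit size and using the single-particle problems~\eqref{eq:eqn-uRE0} and~\eqref{eq:eqn-defin-ugamma} with effective strain rate $\D(v_{\e,N})(X_{\e,N}^n)$, this boundary stresslet is identified via~\eqref{eq:def-S(r)-bis} and~\eqref{eq:def-S(r)gamma-re} with the passive response $\Sigma^\circ(R_{\e,N}^n)\D(v_{\e,N})(X_{\e,N}^n)$ and the active contribution $\kappa_0\Sigma_f^\circ(R_{\e,N}^n)$, weighted by the particle volume $|I_{\e,N}^n|=\lambda/N$. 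Summing the off-diagonal terms $m\ne n$ reconstructs $\lambda\nabla\Gc\,\hat\ast\,(2\langle\Sigma^\circ\nu_{\e,N}\rangle\D(\Gc\ast h)+\kappa_0\langle\Sigma_f^\circ\nu_{\e,N}\rangle)$ upon replacing $\D(v_{\e,N})$ by $\D(\Gc\ast h)$ via step~2. The diagonal self-term in $\fint_{I_{\e,N}^n}(u_{\e,N}^n-v_{\e,N})$, where $\Gc(y-z)$ cannot be Taylor-expanded, is treated separately by rescaling to the unit-scale single-particle problem with source $f(\cdot,R_{\e,N}^n)$: by the definition~\eqref{eq:def-act-drag-0}--\eqref{eq:def-act-drag} of $V^\circ_f$, this diagonal contribution equals precisely $\kappa_0\e V^\circ_f(R_{\e,N}^n)+O(\e^2)$.

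The main obstacle is the simultaneous, sharp identification of the three leading self-induced quantities at a single particle, namely the rigidity stresslet $\Sigma^\circ\D$, the active stresslet $\kappa_0\Sigma_f^\circ$, and the swimming velocity $\kappa_0\e V^\circ_f$, all of which emerge from the same rescaled single-particle Stokes problem but at different orders in $\e$ and $\kappa_0$; one must retain the full source $f$ (not just its monopole) to capture $\Sigma_f^\circ$ and $V^\circ_f$ cleanly, while simultaneously carrying out the multipole truncation against the dilute counting norms $\alpha_{\e,N}^\sigma$. Each Taylor remainder term (at the level of $\nabla^2\Gc$) contributes at least one extra power of $\e$ or of $\lambda(\alpha_{\e,N}^0+1)$ over the dipole, which matches the right-hand side of~\eqref{eq:exp-u1}. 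The far-field estimates~\eqref{eq:estim-u0u1}--\eqref{eq:estim-u0u1-bis} follow from the same expansion by introducing a boundary layer of thickness $\delta$: outside $\Ic_{\e,N}+\delta B$ the relevant gradients of $\Gc$ are bounded by $\delta^{1-d}$, and Taylor remainders contribute $O(\e(\tfrac\e\delta)^d)$.
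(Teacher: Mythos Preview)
Your overall strategy matches the paper's: represent $v_{\e,N}$ and the single-particle corrections $u_{\e,N}^m-v_{\e,N}$ via the Stokeslet, Taylor-expand in $\e$, and identify the resulting stresslets through the single-particle problems~\eqref{eq:eqn-uRE0} and~\eqref{eq:eqn-defin-ugamma}. The error bookkeeping via $\alpha_{\e,N}^\sigma$ is also the same.

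There is, however, a genuine gap in your treatment of~\eqref{eq:exp-nabu0}. You claim that $\nabla v_{\e,N}(X_{\e,N}^n)\simeq\nabla(\Gc\ast h)(X_{\e,N}^n)$ ``controlled as above'', but this fails when $\kappa_0\ne0$: the $n$-th particle's own propulsion force $\kappa_0\tfrac{\lambda}{\e N}f_{\e,N}^n$ contributes an $O(\kappa_0)$ term to $\nabla v_{\e,N}$ at $X_{\e,N}^n$ (by scaling, $\kappa_0\tfrac{\lambda}{\e N}\int\nabla\Gc(X_{\e,N}^n-\cdot)f_{\e,N}^n\sim\kappa_0$), and likewise $\nabla^2 v_{\e,N}\sim\kappa_0/\e$ there, so your ``$O(\e^2)$'' Taylor remainder is only $O(\kappa_0\e)$ in value and $O(\kappa_0)$ in gradient. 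Feeding this through $\Omega^\circ$ would leave an uncontrolled $O(\kappa_0)$ error, whereas the target bound in~\eqref{eq:exp-nabu0} has no bare $\kappa_0$ term. The paper resolves this by \emph{not} Taylor-expanding the full $v_{\e,N}$: it first strips out the self-propulsion by introducing $v_{\e,N}^n$ (equation~\eqref{eq:veps-n}, identical to $v_{\e,N}$ but with the $n$-th propulsion force removed), for which $\|\nabla v_{\e,N}^n-\nabla(\Gc\ast h)(X_{\e,N}^n)\|_{\Ld^\infty(I_{\e,N}^{n;+})}$ \emph{is} small, cf.~\eqref{eq:comput-aver-Dv}. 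The self-propulsion is then handled as a separate single-particle problem $w_{\e,N;1}^n$ (equation~\eqref{eq:def-0-11-wepsn}), and the key observation is that $\fint_{I_{\e,N}^n}\nabla w_{\e,N;1}^n=0$ by axisymmetry of $I^\circ(r)$ and $f(\cdot,r)$ around $r$. This symmetry cancellation is what makes the $O(\kappa_0)$ self-interaction disappear from the angular velocity; your linearized treatment cannot see it.

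A related but more minor point: your phrase ``exploit linearity of $\pi_{\e,N}^{\{n\}}$'' applied to the Taylor pieces is not literally valid, since constants and affine fields do not belong to $\dot H^1(\R^d)$. The paper makes this rigorous not by applying the projection to a Taylor polynomial, but by writing an explicit additive ansatz $u_{\e,N}^m=v_{\e,N}^m+w_{\e,N;0}^m+w_{\e,N;1}^m+e_{\e,N}^m$ (Substep~3.1) and proving an energy estimate~\eqref{eq:estim-eNm} on the remainder $e_{\e,N}^m$ via a direct test-function argument. This same decomposition is what cleanly separates the passive stresslet $\Sigma^\circ$, the active piece (which actually splits as $\Sigma_f'-M_f=\Sigma_f^\circ$, with the two halves coming from different places in the expansion), and the swimming velocity $V_f^\circ$.
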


\begin{proof}
We split the proof into five steps.

\medskip
\step1 First-order cluster contribution: proof of~\eqref{eq:exp-u0}.\\
Recall that $u_{\e,N}^{(0)}=v_{\e,N}$ is defined by equation~\eqref{eq:veps}. In terms of the Stokeslet $\Gc$, it can be written as
\begin{multline}\label{eq:uepsN-0}
v_{\e,N}(x)
\,=\,(\Gc\ast (h\mathds1_{\R^d\setminus\Ic_{\e,N}}))(x)+\sum_{m=1}^N\bigg(\int_{I_{\e,N}^m}\Gc(x-\cdot)e+\kappa_0\tfrac{\lambda}{\e N}\int_{I_{\e,N}^{m;+}}\Gc(x-\cdot)f_{\e,N}^m\bigg)\\
\,=\,(\Gc\ast h)(x)
+\sum_{m=1}^N\int_{I_{\e,N}^m}\Gc(x-\cdot)( e-h)+\kappa_0\tfrac{\lambda}{\e N}\sum_{m=1}^N\int_{I_{\e,N}^{m;+}}\Gc(x-\cdot)f_{\e,N}^m.
\end{multline}
We average this over $x\in I_{\e,N}^n$ for some $1\le n\le N$ and it then remains to compute the local integrals.
For the first right-hand side term, using a second-order Taylor expansion, and recalling $\int_{I_{\e,N}^n}(x-X_{\e,N}^n)\,dx=0$, we find
\[\Big|\fint_{I_{\e,N}^n}\Gc\ast h-(\Gc\ast h)(X_{\e,N}^n)\Big|\,\lesssim_h\,\e^2.\]
For the remaining terms, separating the diagonal contributions, the pointwise bounds on~$\Gc$ directly yield
\begin{eqnarray*}
\lefteqn{\sum_{m=1}^N\fint_{I_{\e,N}^n}\Big|\int_{I_{\e,N}^m}\Gc(x-\cdot)(e-h)\Big|\,dx}\\
&\lesssim_h&\e^d\sum_{m:m\ne n}|X_{\e,N}^n-X_{\e,N}^m|^{2-d}+\fint_{I_{\e,N}^n}\Big(\int_{I_{\e,N}^n}|x-\cdot|^{2-d}\Big)dx\\
&\lesssim&\lambda\alpha_{\e,N}^2+\e^2,
\end{eqnarray*}
and similarly, further recalling the definition $f_{\e,N}^m=\e^{-d}f(\frac1\e(\cdot-X_{\e,N}^m),R_{\e,N}^m)$ and the balance of forces~\eqref{eq:local-balance},
\begingroup\allowdisplaybreaks
\begin{eqnarray*}
\lefteqn{\tfrac{\lambda}{\e N}\sum_{m=1}^N\fint_{I_{\e,N}^n}\Big|\int_{I_{\e,N}^{m;+}}\Gc(x-\cdot)f_{\e,N}^m\Big|\,dx}\\
&=&\tfrac{\lambda}{\e N}\sum_{m=1}^N\fint_{I_{\e,N}^n}\bigg|\int_{I_{\e,N}^{m;+}}\Big(\Gc(x-\cdot)-\fint_{I_{\e,N}^{m;+}}\Gc(x-\cdot)\Big)f_{\e,N}^m\bigg|\,dx\\
&\lesssim&\e^{d}\sum_{m:m\ne n}^N|X_{\e,N}^n-X_{\e,N}^m|^{1-d}+\fint_{I_{\e,N}^n}\Big(\int_{I_{\e,N}^{n;+}}|x-\cdot|^{1-d}\Big)\,dx\\
&\lesssim&\lambda\alpha_{\e,N}^1+\e.
\end{eqnarray*}
\endgroup
Inserting these different estimates into~\eqref{eq:uepsN-0}, the conclusion~\eqref{eq:exp-u0} follows.

\medskip
\step{2} Detailed estimates on the linear proxy $u_{\e,N}^{(0)}=v_{\e,N}$: for all $1\le n\le N$, we have the following version of~\eqref{eq:est-fintDu},
\begin{equation}\label{eq:estim-aver-u0}
\Big(\fint_{I_{\e,N}^{n;+}}|\nabla v_{\e,N}|^2\Big)^\frac12
\,\lesssim_h\,1.
\end{equation}
This can be refined as follows, further capturing the leading contribution: denoting by $v_{\e,N}^n$ the unique decaying solution of the following Stokes equation in $\R^d$,\footnote{This equation coincides with~\eqref{eq:veps} up to removing the propulsion force of the $n$-th particle, which would create an additional $O(1)$ self-interaction term at $I_{\e,N}^n$.}
\begin{equation}\label{eq:veps-n}
\left\{\begin{array}{ll}
-\triangle v_{\e,N}^n+\nabla q_{\e,N}^n=h\mathds1_{\R^d\setminus\Ic_{\e,N}}+ e\mathds1_{\Ic_{\e,N}}+\kappa_0\tfrac{\lambda}{\e N}\sum_{p:p\ne n}f_{\e,N}^p,&\text{in $\R^d$},\\
\Div(v_{\e,N}^n)=0,&\text{in $\R^d$},
\end{array}\right.
\end{equation}
we have
\begin{equation}\label{eq:comput-aver-Dv}
\|\nabla v_{\e,N}^n-\nabla (\Gc\ast h)(X_{\e,N}^n)
\|_{\Ld^\infty(I_{\e,N}^{n;+})}
\,\lesssim_h\,
\lambda(\alpha^1_{\e,N}+\kappa_0\alpha^0_{\e,N})+\e.
\end{equation}
The bound~\eqref{eq:estim-aver-u0} is easily obtained by similar estimates as in Step~1, and we rather focus on the proof of~\eqref{eq:comput-aver-Dv}.
For that purpose, we start with the following identity for the solution of~\eqref{eq:veps-n},
\begin{multline*}
\nabla_i v_{\e,N}^n(x)\,=\,\nabla_i(\Gc\ast h)(x)\\
+\sum_{m=1}^N\int_{I_{\e,N}^m}\nabla_i\Gc(x-\cdot)(e-h)
+\kappa_0\tfrac{\lambda}{\e N}\sum_{m:m\ne n}^N\int_{I_{\e,N}^{m;+}}\nabla_i\Gc(x-\cdot)f_{\e,N}^m.
\end{multline*}
As in Step~1, using again pointwise bounds on $\Gc$, we can estimate the last two right-hand side terms in~$I_{\e,N}^{n;+}$ pointwise by $C_h(\lambda\alpha_{\e,N}^1+\e)$ and by $\kappa_0\lambda\alpha_{\e,N}^0$, respectively,
and the claim~\eqref{eq:comput-aver-Dv} follows.

\medskip
\step3 Second-order cluster contribution: proof of~\eqref{eq:exp-u1}.\\
By definition, cf.~\eqref{eq:cluster-u(n)}, we have
\begin{equation}\label{eq:decomp-ueN-weN}
u_{\e,N}^{(1)}\,=\,\sum_{m=1}^Nw_{\e,N}^m,
\end{equation}
in terms of $w_{\e,N}^m:=u_{\e,N}^m-v_{\e,N}$, where we recall the short-hand notation $u_{\e,N}^m:=\pi^{\{m\}}v_{\e,N}$.
As in~\eqref{eq:rel-u-u0}, we find the following equation for $w_{\e,N}^m$ in $\R^d$,
\begin{equation}\label{eq:def-0-1-wepsn-00}
-\triangle w_{\e,N}^m+\nabla p\,=\,-\Big(e\mathds1_{I_{\e,N}^m}+\kappa_0\tfrac{\lambda}{\e N}f_{\e,N}^m\mathds1_{I_{\e,N}^m}+\delta_{\partial I_{\e,N}^m}\sigma(u_{\e,N}^m,p_{\e,N}^m)\nu\Big).
\end{equation}
In terms of the Stokeslet $\Gc$, using boundary conditions for $u_{\e,N}^m$, we get for all $x\in\R^d$,
\begin{equation}\label{eq:decomp-ueN-weN-plop}
w_{\e,N}^m(x)\,=\,-\int_{\partial I_{\e,N}^m}\Big(\Gc(x-\cdot)-\fint_{I_{\e,N}^m}\Gc(x-\cdot)\Big)\sigma(u_{\e,N}^m,p_{\e,N}^m)\nu.
\end{equation}
Averaging this expression over $x\in I_{\e,N}^n$, summing over $m\ne n$, replacing $\Gc$ by its Taylor expansion,
and appealing to a trace estimate and to pointwise bounds on $\Gc$, cf.~Lemmas~\ref{lem:trace} and~\ref{lem:Green}, we are led to
\begin{multline}\label{eq:expand-w0}
\bigg|\sum_{m:m\ne n}^N\fint_{I_{\e,N}^n}\!\! w_{\e,N}^m-\sum_{m:m\ne n}^N\nabla_i\Gc(X_{\e,N}^n-X_{\e,N}^m)\int_{\partial I_{\e,N}^m}(x-X_{\e,N}^m)_i\,\sigma(u_{\e,N}^m, p_{\e,N}^m)\nu\bigg|\\
\,\lesssim\,\tfrac{\e\lambda}N\sum_{m:m\ne n}^N|X_{\e,N}^m-X_{\e,N}^n|^{-d}\Big(\fint_{I_{\e,N}^{m;+}}|\!\D(u_{\e,N}^m)|^2\Big)^\frac12.
\end{multline}
In order to estimate the last factor in the right-hand side, we appeal to an energy estimate:
testing the equation~\eqref{eq:def-0-1-wepsn-00} for $w_{\e,N}^m$ with $w_{\e,N}^m$ itself, and using boundary conditions for $w_{\e,N}^m=u_{\e,N}^m-v_{\e,N}$, we get the energy identity
\begin{eqnarray*}
\int_{\R^d}|\nabla w_{\e,N}^m|^2
&=&-\int_{\partial I_{\e,N}^m}\Big(w_{\e,N}^m-\fint_{I_{\e,N}^m}w_{\e,N}^m\Big)\cdot\sigma(u_{\e,N}^m,p_{\e,N}^m)\nu\\
&=&\int_{\partial I_{\e,N}^m}\Big(v_{\e,N}-\fint_{I_{\e,N}^m}v_{\e,N}\Big)\cdot\sigma(u_{\e,N}^m,p_{\e,N}^m)\nu,
\end{eqnarray*}
hence, by a trace estimate, cf.~Lemma~\ref{lem:trace},
\begin{equation*}
\int_{\R^d}|\!\D (w_{\e,N}^m)|^2
\,\lesssim\,\Big(\int_{I_{\e,N}^{m;+}}|\!\D(v_{\e,N})|^2\Big)^\frac12\Big(\int_{I_{\e,N}^{m;+}}|\!\D(u_{\e,N}^m)|^2\Big)^\frac12,
\end{equation*}
which entails, by the triangle inequality, with $u_{\e,N}^m=w_{\e,N}^m+v_{\e,N}$,
\begin{equation}\label{eq:bnd-apriori-w0m}
\int_{\R^d}|\!\D (w_{\e,N}^m)|^2+\int_{I_{\e,N}^{m;+}}|\!\D (u_{\e,N}^m)|^2
\,\lesssim\,\int_{I_{\e,N}^{m;+}}|\!\D(v_{\e,N})|^2.
\end{equation}
Combining this with~\eqref{eq:estim-aver-u0} and inserting into~\eqref{eq:expand-w0}, we deduce
\begin{multline}\label{eq:expand-w0-1}
\bigg|\sum_{m:m\ne n}^N\fint_{I_{\e,N}^n}\!\! w_{\e,N}^m-\sum_{m:m\ne n}^N\nabla_i\Gc(X_{\e,N}^n-X_{\e,N}^m)\int_{\partial I_{\e,N}^m}(x-X_{\e,N}^m)_i\,\sigma( u_{\e,N}^m, p_{\e,N}^m)\nu\bigg|\\
\,\lesssim_h\,\e\lambda\alpha_{\e,N}^0.
\end{multline}
Recalling~\eqref{eq:decomp-ueN-weN} and writing $w_{\e,N}^n=u_{\e,N}^n-u_{\e,N}^{(0)}$ for the diagonal term, we are then led to
\begin{equation}\label{eq:decomp-u0u1-pre}
\bigg|\fint_{I_{\e,N}^n}\!\!\big(u^{(0)}_{\e,N}+u^{(1)}_{\e,N}\big)-\fint_{I_{\e,N}^n}\!\!u_{\e,N}^n-\sum_{m:m\ne n}^N\nabla\Gc(X_{\e,N}^n-X_{\e,N}^m)\,\Sigma_{\e,N}^m\bigg|
\,\lesssim_h\,\e\lambda\alpha_{\e,N}^0,
\end{equation}
where we have defined the stresslets 
\begin{equation}\label{eq:Sigma-eNm}
\Sigma_{\e,N}^m\,:=\,\int_{\partial I_{\e,N}^m}\sigma(u_{\e,N}^m, p_{\e,N}^m)\nu\otimes_s^\circ(x-X_{\e,N}^m),\qquad1\le m\le N.
\end{equation}
Here, we recall that $\otimes_s^\circ$ stands for the trace-free symmetric tensor product:
we have used both the vanishing torque condition for $u_{\e,N}^m$ and the incompressibility constraint \mbox{$\Div(\Gc)=0$} to restrict $\Sigma_{\e,N}^m$ to its trace-free symmetric part in~\eqref{eq:decomp-u0u1-pre}.
It remains to evaluate the diagonal term and the stresslets in~\eqref{eq:decomp-u0u1-pre}: we claim that for all $1\le m\le N$,
\begin{align}
&\Big|\Sigma_{\e,N}^m-2|I_{\e,N}^m|\Sigma^\circ(R_{\e,N}^m)\D(\Gc\ast h)(X_{\e,N}^m)-\kappa_0|I_{\e,N}^m|\Sigma'_f(R_{\e,N}^m)\Big|\nonumber\\
&\hspace{6.5cm}\,\lesssim_h\,\e^d\big(\lambda(\alpha_{\e,N}^1+\kappa_0\alpha_{\e,N}^0)+\e\big),\label{eq:u01-stress}\\
&\Big|\fint_{I_{\e,N}^m}u_{\e,N}^m-\big[\Gc\,\hat\ast\,\big((1-\lambda\mu_{\e,N})h+\lambda\mu_{\e,N}e\big)\big](X_{\e,N}^m)\nonumber\\
&\hspace{3cm}+\kappa_0\lambda[\nabla\Gc\,\hat\ast\,(\langle\nu_{\e,N}M_f\rangle)](X_{\e,N}^m)-\kappa_0\e V^\circ_f(R_{\e,N}^m)\Big|\nonumber\\
&\hspace{6.5cm}\,\lesssim_h\,\e\big(\lambda(\alpha_{\e,N}^2+\alpha_{\e,N}^1+\kappa_0\alpha_{\e,N}^0)+\e\big),\label{eq:expand-fintum}
\end{align}
where $M_f$ stands for the first moment of the propulsion force,
\begin{equation}\label{eq:def-Mf}
M_f(r)\,:=\,\int_{\R^d}f(\cdot,r)\otimes^\circ x,
\end{equation}
and where we have set
\begin{equation}\label{eq:def-S(r)gamma-re-plop}
\Sigma'_f(r)\,:=\,\int_{\partial I^\circ(r)}\sigma(u^\circ_{r,f},p^\circ_{r,f})\nu\otimes_s^\circ x,
\end{equation}
recalling that $(u_{r,f}^\circ,p_{r,f}^\circ)$ is the unique decaying solution of the single-particle problem~\eqref{eq:eqn-defin-ugamma}.
The proof of these two estimates~\eqref{eq:u01-stress}--\eqref{eq:expand-fintum} is split into the following three substeps.
Inserting them into~\eqref{eq:decomp-u0u1-pre}, noting that we recover $\Sigma_f'-M_f=\Sigma_f^\circ$ as defined in~\eqref{eq:def-S(r)gamma-re}, further using convolution notation, and using~\eqref{eq:Holder-alpha}, the conclusion~\eqref{eq:exp-u1} follows.

\medskip
\substep{3.1} A suitable decomposition of~$u_{\e,N}^m$.\\
Recalling that $u_{\e,N}^m$ satisfies the following single-particle problem,
\[\left\{\begin{array}{ll}
-\triangle u_{\e,N}^m+\nabla p_{\e,N}^m=h\mathds1_{\R^d\setminus\Ic_{\e,N}}+e\mathds1_{\Ic_{\e,N}\setminus I_{\e,N}^m}+\kappa_0\tfrac{\lambda}{\e N}\sum_{p=1}^Nf_{\e,N}^p,&\text{in $\R^d\setminus I_{\e,N}^m$},\\
\Div(u_{\e,N}^m)=0,&\text{in $\R^d\setminus I_{\e,N}^m$},\\
\D(u_{\e,N}^m)=0,&\text{in $I_{\e,N}^m$},\\
\tfrac\lambda Ne+\kappa_0\tfrac{\lambda}{\e N}R_{\e,N}^m+\int_{\partial I_{\e,N}^m}\sigma(u_{\e,N}^m,p_{\e,N}^m)\nu=0,&\\
\int_{\partial I_{\e,N}^m}(x-X_{\e,N}^m)\times\sigma(u_{\e,N}^m,p_{\e,N}^m)\nu=0,&
\end{array}\right.\]
we may naturally decompose it as
\begin{equation}\label{eq:decomp-ueNm}
u_{\e,N}^m\,=\,v_{\e,N}^m+w_{\e,N;0}^m+w_{\e,N;1}^m+e_{\e,N}^m,
\end{equation}
where we recall that $v_{\e,N}^m$ is defined in~\eqref{eq:veps-n}, and where:
\begin{enumerate}[---]
\item $w_{\e,N;0}^m$ is the unique decaying solution of the single-particle problem
\begin{equation}\label{eq:def-0-1-wepsn}
\left\{\begin{array}{ll}
-\triangle w_{\e,N;0}^m+\nabla q_{\e,N;0}^m=0,&\text{in $\R^d\setminus I_{\e,N}^m$},\\
\Div(w_{\e,N;0}^m)=0,&\text{in $\R^d\setminus I_{\e,N}^m$},\\
\D(w_{\e,N;0}^m)+H_{\e,N}^m=0,&\text{in $I_{\e,N}^m$},\\
\int_{\partial I_{\e,N}^m}\sigma(w_{\e,N;0}^m,q_{\e,N;0}^m)\nu=0,&\\
\int_{\partial I_{\e,N}^m}(x-X_{\e,N}^m)\times\sigma(w_{\e,N;0}^m,q_{\e,N;0}^m)\nu=0,&
\end{array}\right.
\end{equation}
where the strain rate $H_{\e,N}^m$ is chosen as
\begin{equation}\label{eq:not-HeNm}
H_{\e,N}^m\,:=\,\D(\Gc\ast h)(X_{\e,N}^m);
\end{equation}
\item $w_{\e,N;1}^m$ is the unique decaying solution of the single-particle problem
\begin{equation}\label{eq:def-0-11-wepsn}
\left\{\begin{array}{ll}
-\triangle w_{\e,N;1}^m+\nabla q_{\e,N;1}^m=\kappa_0\tfrac{\lambda}{\e N}f_{\e,N}^m,&\text{in $\R^d\setminus I_{\e,N}^m$},\\
\Div(w_{\e,N;1}^m)=0,&\text{in $\R^d\setminus I_{\e,N}^m$},\\
\D(w_{\e,N;1}^m)=0,&\text{in $I_{\e,N}^m$},\\
\kappa_0\tfrac{\lambda}{\e N}R_{\e,N}^m+\int_{\partial I_{\e,N}^m}\sigma(w_{\e,N;1}^m,q_{\e,N;1}^m)\nu=0,&\\
\int_{\partial I_{\e,N}^m}(x-X_{\e,N}^m)\times\sigma(w_{\e,N;1}^m,q_{\e,N;1}^m)\nu=0.&
\end{array}\right.
\end{equation}
\end{enumerate}
Using Lemma~\ref{lem:eqn-pseudo}, a direct computation then entails that the remainder
\[e_{\e,N}^m\,:=\,u_{\e,N}^m-v_{\e,N}^m-w_{\e,N;0}^m-w_{\e,N;1}^m\]
satisfies the following equation in $\R^d$,
\begin{equation*}
-\triangle e_{\e,N}^m+\nabla p=-\Big( e\mathds1_{I^m_{\e,N}}
+\delta_{\partial I_{\e,N}^m}\sigma\big(u_{\e,N}^m-w_{\e,N;0}^m-H_{\e,N}^mx-w_{\e,N;1}^m,q_{\e,N}^m-q_{\e,N;0}^m-q_{\e,N;1}^m\big)\nu\Big).
\end{equation*}
Testing this equation with $e_{\e,N}^m$ itself, and using boundary conditions, we get
\begin{eqnarray*}
\int_{\R^d}|\nabla e_{\e,N}^m|^2&=&-\int_{\partial I_{\e,N}^m} \Big(e_{\e,N}^m-\fint_{I_{\e,N}^m}e_{\e,N}^m\Big)\\
&&\hspace{1cm}\cdot\sigma\big(u_{\e,N}^m-w_{\e,N;0}^m-H_{\e,N}^mx-w_{\e,N;1}^m,q_{\e,N}^m-q_{\e,N;0}^m-q_{\e,N;1}^m\big)\nu\\
&=&\int_{\partial I_{\e,N}^m} \Big((v_{\e,N}^m-H_{\e,N}^mx)-\fint_{I_{\e,N}^m}(v_{\e,N}^m-H_{\e,N}^mx)\Big)\\
&&\hspace{1cm}\cdot\sigma\big(u_{\e,N}^m-w_{\e,N;0}^m-H_{\e,N}^mx-w_{\e,N;1}^m,q_{\e,N}^m-q_{\e,N;0}^m-q_{\e,N;1}^m\big)\nu,
\end{eqnarray*}
hence, by a trace estimate, cf.~Lemma~\ref{lem:trace},
\begin{equation*}
\int_{\R^d}|\nabla e_{\e,N}^m|^2
\,\lesssim\,\Big(\int_{I_{\e,N}^m}|\!\D(v_{\e,N}^m)-H_{\e,N}^m|^2\Big)^\frac12\Big(\int_{I_{\e,N}^{m;+}}|\!\D(u_{\e,N}^m-w_{\e,N;0}^m-w_{\e,N;1}^m)-H_{\e,N}^m|^2\Big)^\frac12.
\end{equation*}
By the triangle inequality, reconstructing $e_{\e,N}^m$ in the last factor, we are led to
\begin{equation*}
\int_{\R^d}|\nabla e_{\e,N}^m|^2
\,\lesssim\,\int_{I_{\e,N}^m}|\!\D(v_{\e,N}^m)-H_{\e,N}^m|^2,
\end{equation*}
and thus, appealing to~\eqref{eq:comput-aver-Dv} and recalling the choice~\eqref{eq:not-HeNm},
\begin{equation}\label{eq:estim-eNm}
\int_{\R^d}|\nabla e_{\e,N}^m|^2
\,\lesssim_h\,\e^d\big(\lambda(\alpha_{\e,N}^1+\kappa_0\alpha_{\e,N}^0)+\e\big)^2.
\end{equation}

\medskip
\substep{3.2} Proof of~\eqref{eq:u01-stress}.\\
Inserting the above decomposition~\eqref{eq:decomp-ueNm} for $u_{\e,N}^m$ into the definition~\eqref{eq:Sigma-eNm} of the stresslet, and using the remainder estimate~\eqref{eq:estim-eNm} together with a trace estimate, cf.~Lemma~\ref{lem:trace}, we find
\begin{multline}\label{eq:SigmaeNm-errorestim}
\bigg|\Sigma_{\e,N}^m-\int_{\partial I_{\e,N}^m}\sigma(v_{\e,N}^m,q_{\e,N}^m)\nu\otimes_s^\circ(x-X_{\e,N}^m)
-\int_{\partial I_{\e,N}^m}\sigma(w_{\e,N;0}^m,q_{\e,N;0}^m)\nu\otimes_s^\circ(x-X_{\e,N}^m)\\
-\int_{\partial I_{\e,N}^m}\sigma(w_{\e,N;1}^m,q_{\e,N;1}^m)\nu\otimes_s^\circ(x-X_{\e,N}^m)\bigg|\\
\,\lesssim\,\e^d\Big(\fint_{I_{\e,N}^{m;+}}|\!\D(e_{\e,N}^m)|^2\Big)^\frac12
\,\lesssim_h\,\e^d\big(\lambda(\alpha_{\e,N}^1+\kappa_0\alpha_{\e,N}^0)+\e\big).
\end{multline}
It remains to evaluate the different terms in the left-hand side.
First, we compute
\[\int_{\partial I_{\e,N}^m}\sigma(v_{\e,N}^m,q_{\e,N}^m)\nu\otimes_s^\circ(x-X_{\e,N}^m)\,=\,\int_{I_{\e,N}^m}2\!\D(v_{\e,N}^m),\]
and thus, by~\eqref{eq:comput-aver-Dv},
\begin{multline*}
\Big|\int_{\partial I_{\e,N}^m}\sigma(v_{\e,N}^m,q_{\e,N}^m)\nu\otimes_s^\circ(x-X_{\e,N}^m)
-2|I_{\e,N}^m|\D(\Gc\ast h)(X_{\e,N}^m)\Big|\\
\,\lesssim_h\,\e^d\big(\lambda(\alpha^1_{\e,N}+\kappa_0\alpha^0_{\e,N})+\e\big).
\end{multline*}
Next, as $w_{\e,N;0}^m$ and $w_{\e,N;1}^m$ satisfy the single-particle problems~\eqref{eq:def-0-1-wepsn}--\eqref{eq:def-0-11-wepsn}, which can be compared to~\eqref{eq:eqn-uRE0} and~\eqref{eq:eqn-defin-ugamma}, we simply find by scaling
\begin{eqnarray*}
\int_{\partial I_{\e,N}^m}\sigma(w_{\e,N;0}^m,q_{\e,N;0}^m)\nu\otimes_s^\circ(x-X_{\e,N}^m)&=&2|I_{\e,N}^m|\big(\Sigma^\circ(R_{\e,N}^m)H_{\e,N}^m-H_{\e,N}^m\big),\\
\int_{\partial I_{\e,N}^m}\sigma(w_{\e,N;1}^m,q_{\e,N;1}^m)\nu\otimes_s^\circ(x-X_{\e,N}^m)&=&\kappa_0|I_{\e,N}^m|\Sigma'_f(R_{\e,N}^m),
\end{eqnarray*}
where we recall the definition of $\Sigma^\circ,\Sigma'_f$ in~\eqref{eq:def-S(r)-bis} and~\eqref{eq:def-S(r)gamma-re-plop}.
Inserting these different computations into~\eqref{eq:SigmaeNm-errorestim}, and recalling $H_{\e,N}^m=\D(\Gc\ast h)(X_{\e,N}^m)$, cf.~\eqref{eq:not-HeNm}, the claim~\eqref{eq:u01-stress} follows.

\medskip
\substep{3.3} Proof of~\eqref{eq:expand-fintum}.\\
Using again the decomposition~\eqref{eq:decomp-ueNm} for $u_{\e,N}^m$, and noting that, by the Sobolev embedding, the remainder estimate~\eqref{eq:estim-eNm} yields
\begin{eqnarray*}
\Big|\fint_{I_{\e,N}^m}e_{\e,N}^m\Big|
&\lesssim&\e^{1-\frac{d}{2}}\Big(\int_{I_{\e,N}^m}|e_{\e,N}^m|^{\frac{2d}{d-2}}\Big)^\frac{d-2}{2d}\\
&\lesssim&\e^{1-\frac{d}{2}}\Big(\int_{I_{\e,N}^m}|\nabla e_{\e,N}^m|^2\Big)^\frac12\\
&\lesssim_h&\e\big(\lambda(\alpha_{\e,N}^1+\kappa_0\alpha_{\e,N}^0)+\e\big),
\end{eqnarray*}
we deduce
\begin{equation}\label{eq:expand-fintum-pre}
\Big|\fint_{I_{\e,N}^m}u_{\e,N}^m-\fint_{I_{\e,N}^m}v_{\e,N}^m-\fint_{I_{\e,N}^m}w_{\e,N;0}^m-\fint_{I_{\e,N}^m}w_{\e,N;1}^m\Big|\,\lesssim_h\,\e\big(\lambda(\alpha_{\e,N}^1+\kappa_0\alpha_{\e,N}^0)+\e\big),
\end{equation}
and it remains to evaluate the different terms in the left-hand side.
First, recalling equation~\eqref{eq:veps-n} for $v_{\e,N}^m$, we can represent, in terms of the Stokeslet $\Gc$,
\begin{equation}\label{eq:ven-repG}
v_{\e,N}^m(x)\,=\,(\Gc\ast h)(x)+\sum_{p=1}^N\int_{I_{\e,N}^p}\Gc(x-\cdot)(e-h)+\kappa_0\tfrac\lambda{\e N}\sum_{p:p\ne m}^N\int_{I_{\e,N}^{p;+}}\Gc(x-\cdot)f_{\e,N}^p.
\end{equation}
Averaging this over $x\in I_{\e,N}^m$, using a Taylor expansion and the pointwise bounds on $\Gc$, cf.~Lemma~\ref{lem:Green},
and recalling $f_{\e,N}^m=\e^{-d}f(\frac1\e(\cdot-X_{\e,N}^m),R_{\e,N}^m)$ and the balance of forces~\eqref{eq:local-balance},
we find for the off-diagonal contributions,
\begin{align*}
&\Big|\fint_{I_{\e,N}^m}\Gc\ast h-(\Gc\ast h)(X_{\e,N}^m)\Big|\,\lesssim_h\,\e^2,\\
&\bigg|\sum_{p:p\ne m}^N\fint_{I_{\e,N}^m}\Big(\int_{I_{\e,N}^p}\Gc(x-\cdot)(e-h)\Big)\,dx-\tfrac\lambda N\sum_{p:p\ne m}^N\Gc(X_{\e,N}^m-X_{\e,N}^p)\big(e-h(X_{\e,N}^p)\big)\bigg|\\
&\hspace{10.8cm}\,\lesssim_h\,\e\lambda(\alpha_{\e,N}^2+\alpha_{\e,N}^1),\\
&\bigg|\tfrac\lambda{\e N}\sum_{p:p\ne m}^N\fint_{I_{\e,N}^m}\Big(\int_{I_{\e,N}^{p;+}}\Gc(x-\cdot)f_{\e,N}^p\Big)+\tfrac\lambda N\sum_{p:p\ne m}^N\nabla\Gc(X_{\e,N}^m-X_{\e,N}^p)M_f(R_{\e,N}^p)\bigg|\,\lesssim\,\e\lambda\alpha_{\e,N}^0,
\end{align*}
in terms of the first moment $M_f$ of the propulsion force, cf.~\eqref{eq:def-Mf}.
In addition, for the diagonal contribution in the second right-hand side term of~\eqref{eq:ven-repG}, we can estimate
\[\fint_{I_{\e,N}^m}\Big|\int_{I_{\e,N}^m}\Gc(x-\cdot)(e-h)\Big|\,\lesssim_h\,\fint_{I_{\e,N}^m}\Big(\int_{I_{\e,N}^m}|x-\cdot|^{2-d}\Big)\,\lesssim\,\e^2.\]
Inserting these different computations into~\eqref{eq:ven-repG}, and using convolution notation, we get
\begin{multline}\label{eq:ven-expand}
\Big|\fint_{I_{\e,N}^m}v_{\e,N}^m
-\big[\Gc\,\hat\ast\,\big((1-\lambda\mu_{\e,N})h+\lambda\mu_{\e,N}e\big)\big](X_{\e,N}^m)
+\kappa_0\lambda \big[\nabla\Gc\ast(\langle\nu_{\e,N}M_f\rangle)\big](X_{\e,N}^m)\Big|\\
\,\lesssim_h\,\e\big(\lambda(\alpha^2_{\e,N}+\alpha^1_{\e,N}+\kappa_0\alpha^0_{\e,N})+\e\big).
\end{multline}
We turn to the analysis of the last two left-hand side contributions in~\eqref{eq:expand-fintum-pre}.
As $w_{\e,N;0}^m$ solves the single-particle problem~\eqref{eq:def-0-1-wepsn}, we note that it actually satisfies the Dirichlet condition $w_{\e,N;0}^m(x)=-H_{\e,N}^m(x-X_{\e,N}^m)$ in $I_{\e,N}^m$, hence
\[\fint_{I_{\e,N}^m}w^m_{\e,N;0}\,=\,0.\] 
 As $w_{\e,N;1}^m$ satisfies the single-particle problem~\eqref{eq:def-0-11-wepsn}, which can be compared to~\eqref{eq:eqn-defin-ugamma}, we find by scaling
\begin{eqnarray*}
\fint_{I_{\e,N}^m}w^m_{\e,N;1}\,=\,\kappa_0\e V^\circ_f(R_{\e,N}^m),
\end{eqnarray*}
where we recall the definition~\eqref{eq:def-act-drag-0} of $V^\circ_f$.
Inserting these identities into~\eqref{eq:expand-fintum-pre}, together with~\eqref{eq:ven-expand}, recalling $H_{\e,N}^m=\D(\Gc\ast h)(X_{\e,N}^m)$, cf.~\eqref{eq:not-HeNm}, and using convolution notation, the claim~\eqref{eq:expand-fintum} follows.

\medskip
\step4 Angular velocities: proof of~\eqref{eq:exp-nabu0}.\\
Recalling the short-hand notation $u_{\e,N}^n=\pi^{\{n\}}_{\e,N}u^{(0)}_{\e,N}$, we start again with the decomposition~\eqref{eq:decomp-ueNm}: using the remainder estimate~\eqref{eq:estim-eNm}, we find
\begin{equation}\label{eq:decomp-nabu}
\Big|\fint_{I_{\e,N}^n}\!\!\nabla u^n_{\e,N}-\fint_{I_{\e,N}^n}\!\!\nabla v_{\e,N}^n-\fint_{I_{\e,N}^n}\!\!\nabla w_{\e,N;0}^n-\fint_{I_{\e,N}^n}\!\!\nabla w_{\e,N;1}^n\Big|\,\lesssim_h\,\lambda(\alpha_{\e,N}^1+\kappa_0\alpha_{\e,N}^0)+\e,
\end{equation}
and it remains to evaluate the different terms in the left-hand side. First, we get from~\eqref{eq:comput-aver-Dv},
\begin{equation*}
\Big|\fint_{I_{\e,N}^n}\!\!\nabla v_{\e,N}^n-\nabla(\Gc\ast h)(X_{\e,N}^n)\Big|
\,\lesssim_h\,
\lambda(\alpha_{\e,N}^1+\kappa_0\alpha_{\e,N}^0)+\e.
\end{equation*}
Next, as $w_{\e,N;0}^n$ satisfies the single-particle problems~\eqref{eq:def-0-1-wepsn}, which can be compared to~\eqref{eq:eqn-uRE0}, we find by scaling
\begin{eqnarray*}
\fint_{I_{\e,N}^n}\nabla w_{\e,N;0}^n&=&\fint_{I^\circ(R_{\e,N}^n)}\nabla u^\circ_{R_{\e,N}^n,H_{\e,N}^n}\\
&=&\Omega^\circ(R_{\e,N}^n)\nabla(\Gc\ast h)(X_{\e,N}^n)-\nabla(\Gc\ast h)(X_{\e,N}^n),
\end{eqnarray*}
where we recall the definition~\eqref{eq:defin-Omega0} of $\Omega^\circ$ and the choice~\eqref{eq:not-HeNm} of $H_{\e,N}^n$ as the symmetric part of $\nabla(\Gc\ast h)(X_{\e,N}^n)$.
Finally, as~$w_{\e,N;1}^n$ satisfies the single-particle problem~\eqref{eq:def-0-11-wepsn} with rigidity constraint $\D(w_{\e,N;1}^n)=0$ in $I_{\e,N}^n$, and as the inclusion $I_{\e,N}^n$ and the propulsion force~$f_{\e,N}^n$ are both axisymmetric in the direction~$R_{\e,N}^n$, centered at $X_{\e,N}^n$, we find by symmetry
\begin{equation*}
\fint_{I_{\e,N}^n}\nabla w_{\e,N;1}^n\,=\,0.
\end{equation*}
Inserting these different computations into~\eqref{eq:decomp-nabu}, the conclusion~\eqref{eq:exp-nabu0} follows.

\medskip
\step5 Fluid velocity away from the particles: proof of~\eqref{eq:estim-u0u1}--\eqref{eq:estim-u0u1-bis}.\\
Let the boundary-layer thickness $\delta\in[\e,1]$ be fixed.
The proof of~\eqref{eq:estim-u0u1}--\eqref{eq:estim-u0u1-bis} is slightly simpler than that of~\eqref{eq:exp-u0}--\eqref{eq:exp-u1} as self-interaction terms can be ignored away from the particles.
We start with~\eqref{eq:estim-u0u1}. Recalling the representation~\eqref{eq:uepsN-0} for $u_{\e,N}^{(0)}=v_{\e,N}$, and using the pointwise bounds on $\Gc$, we get for all $x\in\R^d\setminus(\Ic_{\e,N}+\delta B)$,
\[\big|u_{\e,N}^{(0)}(x)-\Gc\ast h(x)\big|\,\lesssim_h\,\tfrac\lambda N\sum_{m=1}^N\Big(|x-X_{\e,N}^m|^{2-d}+\kappa_0|x-X_{\e,N}^m|^{1-d}\Big).\]
To estimate the right-hand side, we use~\eqref{eq:bnd-x-alpha} in the following modified form away from the particles: for all $x\in \R^d\setminus(\Ic_{\e,N}+\delta B)$, we can estimate for any $\sigma\in[0,d]$, after separating the diagonal contribution as in~\eqref{eq:bnd-x-alpha-re},
\begin{equation}\label{eq:bnd-x-alpha-reint}
\tfrac1N\sum_{m=1}^N|x-X_{\e,N}^m|^{\sigma-d}\,\lesssim\,\alpha_{\e,N}^\sigma+\tfrac1N\delta^{\sigma-d}\,\simeq\,\alpha_{\e,N}^\sigma+\tfrac1\lambda\delta^{\sigma}(\tfrac\e\delta)^d.
\end{equation}
Using this to estimate the above right-hand side, we get the conclusion~\eqref{eq:estim-u0u1}.

\medskip\noindent
We turn to the proof of~\eqref{eq:estim-u0u1-bis} and we start with an improved estimate on~$u_{\e,N}^{(0)}=v_{\e,N}$.
Replacing $\Gc$ by its Taylor expansion in the representation~\eqref{eq:uepsN-0} for the latter, we find for all $x\in\R^d\setminus(\Ic_{\e,N}+\delta B)$,
\begin{multline*}
\Big|u_{\e,N}^{(0)}(x)-\big[\Gc\ast \big((1-\lambda\mu_{\e,N})h+\lambda\mu_{\e,N}e\big)\big](x)+\kappa_0\lambda\big[\nabla\Gc\ast(\langle M_f\nu_{\e,N}\rangle)\big](x)\Big|\\
\,\lesssim_h\,\e\tfrac\lambda N\sum_{m=1}^N\big(|x-X_{\e,N}^m|^{2-d}+|x-X_{\e,N}^m|^{1-d}+\kappa_0|x-X_{\e,N}^m|^{-d}\big),
\end{multline*}
and thus, using~\eqref{eq:bnd-x-alpha-reint} again to estimate the right-hand side,
\begin{multline}\label{eq:improved-exp-u0-far}
\Big\|u_{\e,N}^{(0)}-\Gc\ast \big((1-\lambda\mu_{\e,N})h+\lambda\mu_{\e,N}e\big)+\kappa_0\lambda\nabla\Gc\ast(\langle M_f\nu_{\e,N}\rangle)\Big\|_{\Ld^\infty(\R^d\setminus(\Ic_{\e,N}+\delta B))}\\
\,\lesssim_h\,\e\Big(\lambda\big(\alpha_{\e,N}^2+\alpha_{\e,N}^1+\kappa_0\alpha_{\e,N}^0\big)+(\delta+\kappa_0)(\tfrac\e\delta)^d\Big).
\end{multline}
We turn to the corresponding analysis of $u_{\e,N}^{(1)}$.
As in~\eqref{eq:decomp-ueN-weN}--\eqref{eq:decomp-ueN-weN-plop}, we can write
\begin{equation}\label{eq:decomp-ueN-weN-re}
u_{\e,N}^{(1)}\,=\,-\sum_{m=1}^N\int_{\partial I_{\e,N}^m}\Big(\Gc(x-\cdot)-\fint_{I_{\e,N}^m}\Gc(x-\cdot)\Big)\sigma(u_{\e,N}^m,p_{\e,N}^m)\nu.
\end{equation}
Replacing $\Gc$ by its Taylor expansion, we deduce for all $x\in\R^d\setminus(\Ic_{\e,N}+\delta B)$,
\[\Big|u_{\e,N}^{(1)}(x)-\sum_{m=1}^N\nabla\Gc(x-X^m)\Sigma_{\e,N}^m\Big|\,\lesssim\,\e\tfrac\lambda N\sum_{m=1}^N|x-X_{\e,N}^m|^{-d}\Big(\fint_{I_{\e,N}^m}|\!\D(u_{\e,N}^m)|^2\Big)^\frac12,\]
in terms of the stresslets $\{\Sigma_{\e,N}^m\}_m$ defined in~\eqref{eq:Sigma-eNm}.
Appealing to~\eqref{eq:bnd-apriori-w0m} in combination with~\eqref{eq:estim-aver-u0} to estimate the last factor, and using~\eqref{eq:bnd-x-alpha-reint} again, we get
\[\Big\|u_{\e,N}^{(1)}-\sum_{m=1}^N\nabla\Gc(\cdot-X^m)\Sigma_{\e,N}^m\Big\|_{\Ld^\infty(\R^d\setminus(\Ic_{\e,N}+\delta B))}\,\lesssim_h\,\e\big(\lambda\alpha^0_{\e,N}+(\tfrac\e\delta)^d\big).\]
Now inserting the approximation~\eqref{eq:u01-stress} for the stresslets, using~\eqref{eq:bnd-x-alpha-reint} again, using convolution notation, and noting that $(\tfrac\e\delta)^d\delta\le\e$, we deduce
\begin{multline*}
\Big\|u^{(1)}_{\e,N}-\lambda\nabla\Gc\ast\big(2\langle\Sigma^\circ\nu_{\e,N}\rangle\!\D(\Gc\ast h)+\kappa_0\langle\Sigma_f'\nu_{\e,N}\rangle\big)\Big\|_{\Ld^\infty(\R^d\setminus(\Ic_{\e,N}+\delta B))}\\
\,\lesssim_h\,\lambda\alpha_{\e,N}^1\big(\lambda(\alpha_{\e,N}^1+\kappa_0\alpha_{\e,N}^0)+\e\big)+\e\big(\lambda\alpha_{\e,N}^0+(\tfrac\e\delta)^d\big).
\end{multline*}
Combining this with~\eqref{eq:improved-exp-u0-far}, recalling $\Sigma_f^\circ=\Sigma_f'-M_f$, and using~\eqref{eq:Holder-alpha}, we get the conclusion~\eqref{eq:estim-u0u1-bis}.
\end{proof}

\medskip
\section{Mean-field approximation}\label{sec:MFL}
Given the dilute expansion of particle velocities in Proposition~\ref{prop:dilute-exp}, we now appeal to a mean-field argument to derive a macroscopic equation for the particle density.
For that purpose, as in~\cite{Hofer-18,Mecherbet-19,Hofer-Schubert-21,Hofer-Schubert-23}, we use the Wasserstein method developed by Hauray and Jabin in~\cite{Hauray-Jabin-07,Hauray-Jabin-15} (see also~\cite{Carrillo-Choi-Hauray-14}). It takes form of a buckling argument, based on the following observation: proving the mean-field result requires controlling local information such as the interparticle distance, while bounds on the latter are actually improved if the mean-field result is known to hold.
The method applies whenever particle interactions are strictly less singular than Coulomb forces at short distances. In the present setting, hydrodynamic interactions involve the kernel~$\lambda\nabla\Gc$, cf.~Proposition~\ref{prop:dilute-exp}, which has the {\it same} singularity as Coulomb forces, but the prefactor~$O(\lambda)$ allows to treat it perturbatively using sub-Coulomb techniques.
We shall actually use an improved version of the Wasserstein method by Höfer and Schubert~\cite{Hofer-Schubert-21,Hofer-Schubert-23}, which relies on the following estimate,
describing how the closeness of the empirical measure to some bounded density yields improved bounds on key quantities like~$\{\alpha_{\e,N}^\sigma\}_\sigma$, cf.~\eqref{eq:def-alpha}.

\begin{lem}[\cite{Hofer-Schubert-21,Hofer-Schubert-23}]\label{lem:Richard}
For all $\sigma\in(0,d]$, $1\le p\le\infty$, and $\ell>0$, there holds
\begin{multline*}
\max_{1\le n\le N}\tfrac1N\sum_{m:m\ne n}|X_{\e,N}^n-X_{\e,N}^m|^{\sigma-d}\,\mathds1_{|X_{\e,N}^n-X_{\e,N}^m|\le\ell}\\
\,\lesssim\,
\tfrac1{\sigma}\|\mu\|_{\Ld^1\cap\Ld^\infty(\R^d)}\Big((1\wedge\ell)^\sigma
+\big(1+(N^\frac1dd_{\e,N}^{\min})^{\sigma -d}\big)W_p(\mu_{\e,N},\mu)^{\frac{\sigma p}{p+d}}\Big),
\end{multline*}
and moreover, for $\sigma=0$, there holds for all $\delta,\ell>0$,
\begin{align*}
&\max_{1\le n\le N}\tfrac1N\sum_{m:m\ne n}|X_{\e,N}^n-X_{\e,N}^m|^{-d}\,\mathds1_{\delta\le|X_{\e,N}^n-X_{\e,N}^m|\le\ell}\\
&\hspace{1cm}\,\lesssim\,\|\mu\|_{\Ld^1\cap\Ld^\infty(\R^d)}\bigg(\log\big(2+\delta^{-1}(1\wedge\ell)\big)\\
&\hspace{4cm}+(N^\frac1dd_{\e,N}^{\min})^{-d}\log\Big(2+\delta^{-1}(N^\frac1dd_{\e,N}^{\min})W_p(\mu_{\e,N},\mu)^\frac{p}{d+p}\Big)\bigg).\qedhere
\end{align*}
\end{lem}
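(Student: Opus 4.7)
\medskip\noindent\textbf{Proof plan.} I would follow the standard Hauray--Jabin--H\"ofer--Schubert strategy, combining optimal transport with a dyadic decomposition, with a crossover between a \emph{combinatorial} bound based on the minimal interparticle distance $d_{\e,N}^{\min}$ and an \emph{analytic} bound based on the Wasserstein proximity of $\mu_{\e,N}/N$ to the bounded density~$\mu$. Fix $n$ and write $X:=X_{\e,N}^n$; the task reduces to estimating, for each dyadic radius $r$, the cardinality $\#\{m:|X-X_m|\le r\}$ and summing after weighting by $r^{\sigma-d}$.

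\medskip\noindent\textbf{Step 1 (two ball-counting estimates).} First, the disjoint packing $\bigsqcup_m B_{d_{\e,N}^{\min}/2}(X_m)\subset B_{r+d_{\e,N}^{\min}/2}(X)$ yields the combinatorial bound $\#\{m:|X-X_m|\le r\}\lesssim(1+r/d_{\e,N}^{\min})^d$. Second, letting $T$ be an optimal transport map realizing $W_p:=W_p(\mu_{\e,N},\mu)$ and applying Markov's inequality to its displacement, one gets for any $\eta>0$
$$\tfrac1N\#\{m:|X-X_m|\le r\}\,=\,\int\mathds{1}_{|T(y)-X|\le r}\,d\mu(y)\,\le\,\mu(B_{r+\eta}(X))+\eta^{-p}W_p^p,$$
so that combining with $\mu(B_{r+\eta}(X))\lesssim\|\mu\|_{\Ld^\infty}(r+\eta)^d$ and optimizing $\eta$ gives the analytic bound $\tfrac1N\#\{m:|X-X_m|\le r\}\lesssim\|\mu\|_{\Ld^\infty}r^d+W_p^{pd/(p+d)}$, with an $\Ld^1$-variant valid at scales exceeding~$\supp\mu$.

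\medskip\noindent\textbf{Step 2 (dyadic sum for $\sigma>0$).} Split the target sum into dyadic shells of radii $r_k=2^k d_{\e,N}^{\min}$. Each shell contributes $r_k^{\sigma-d}\cdot\#(A_k)/N$: the combinatorial bound gives $\lesssim r_k^\sigma/(N(d_{\e,N}^{\min})^d)$ and is sharper on the near shells, whereas the analytic bound gives $\lesssim\|\mu\|_{\Ld^\infty}r_k^\sigma$ and is sharper on the far shells. Equating the two identifies the crossover scale $r^\star\simeq N^{1/d}d_{\e,N}^{\min}\,W_p^{p/(p+d)}$. The geometric series $\sum r_k^\sigma$ converges for $\sigma>0$ (producing the $\tfrac1\sigma$ factor), and summing on each side of $r^\star$ yields $\tfrac1\sigma\|\mu\|_{\Ld^1\cap\Ld^\infty}(1\wedge\ell)^\sigma$ together with the correction $\tfrac1\sigma(N^{1/d}d_{\e,N}^{\min})^{\sigma-d}W_p^{\sigma p/(p+d)}$; the $\Ld^1$-norm is invoked to cap the analytic series at the scale of $\supp\mu$, producing the $1\wedge\ell$. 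The additive ``$1$'' in the prefactor covers the well-separated regime where the analytic bound already suffices down to the scale $d_{\e,N}^{\min}$.

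\medskip\noindent\textbf{Step 3 (case $\sigma=0$, and main obstacle).} For $\sigma=0$ the shells carry equal weight and the geometric sum degenerates into a count of scales: the analytic regime contributes $\|\mu\|_{\Ld^1\cap\Ld^\infty}\log(2+\delta^{-1}(1\wedge\ell))$ and the combinatorial regime up to $r^\star$ contributes $(N^{1/d}d_{\e,N}^{\min})^{-d}\log(2+\delta^{-1}r^\star)$, matching the two logarithms in the statement. The delicate point throughout is the precise tracking of the crossover scale $r^\star$ so as to produce exactly the exponent $\sigma p/(p+d)$ on $W_p$ and exactly the prefactor $(N^{1/d}d_{\e,N}^{\min})^{\sigma-d}$: this amounts to a Young-type balance between the Lebesgue mass of a thickened ball and the Markov tail of the transport displacement, and one must verify that the combinatorial bound is only used on shells where it actually beats the analytic one, since otherwise the estimate would blow up in the ill-separated regime $N^{1/d}d_{\e,N}^{\min}\to0$.
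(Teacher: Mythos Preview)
Your proposal is correct and reconstructs the H\"ofer--Schubert argument that the paper invokes by reference. The paper's own proof is far more compressed: rather than carrying out the dyadic ball-counting analysis you outline, it only records the convolution inequality
\[\big\||\cdot|^{\sigma-d}\ast\psi\big\|_{\Ld^\infty(\R^d)}\,\lesssim\,\tfrac1\sigma\|\psi\|_{\Ld^1(\R^d)}^{\sigma/d}\|\psi\|_{\Ld^\infty(\R^d)}^{1-\sigma/d}\]
(obtained by splitting the kernel at a radius $R$ and optimizing), and for $\sigma=0$ the logarithmic variant
\[\big\|(\delta+|\cdot|)^{-d}\ast\psi\big\|_{\Ld^\infty(\R^d)}\,\lesssim\,\|\psi\|_{\Ld^1\cap\Ld^\infty(\R^d)}\log(2+\delta^{-1}),\]
obtained by writing $(\delta+|\cdot|)^{-d}\le\delta^{-\tau}|\cdot|^{\tau-d}$ and optimizing over $0<\tau\le d$. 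It then simply instructs the reader to repeat the proof of Lemma~2.3 in H\"ofer--Schubert (2023) with these inequalities substituted for the corresponding step there. Your Steps~1--3 are essentially the content of that cited lemma, so you have unpacked what the paper leaves to the reference; in particular your analytic ball-counting bound via optimal transport and Markov's inequality, and the crossover-scale balance yielding the exponent $\tfrac{\sigma p}{p+d}$, are exactly what lies behind that citation, while the $\tfrac1\sigma$ factor you attribute to the geometric series is precisely what the paper extracts from the displayed convolution estimate above.
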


\begin{proof}
This is an immediate reformulation of corresponding results in~\cite{Hofer-Schubert-21,Hofer-Schubert-23}.
First, for~$0<\sigma\le d$, we can estimate for any $R>0$,
\[\||\cdot|^{\sigma-d}\ast\psi\|_{\Ld^\infty(\R^d)}\,\lesssim\,\tfrac1{\sigma}R^{\sigma}\|\psi\|_{\Ld^\infty(\R^d)}+R^{\sigma-d}\|\psi\|_{\Ld^1(\R^d)}.\]
and therefore, after optimizing with respect to $R$,
\begin{equation}\label{eq:estim-Linfty-L1}
\||\cdot|^{\sigma-d}\ast\psi\|_{\Ld^\infty(\R^d)}\,\lesssim\,\tfrac1{\sigma}\|\psi\|_{\Ld^1(\R^d)}^{\frac\sigma d}\|\psi\|_{\Ld^\infty(\R^d)}^{1-\frac\sigma d}.
\end{equation}
Using this instead of~\cite[eqn~(2.6)]{Hofer-Schubert-23}, and repeating the proof of~\cite[Lemma~2.3]{Hofer-Schubert-23}, the first part of the statement follows.
Next, in case $\sigma=0$, we rather start with the following observation: given~$\delta>0$, using~\eqref{eq:estim-Linfty-L1}, we can bound for all $0<\tau\le d$,
\begin{eqnarray*}
\|(\delta+|\cdot|)^{-d}\ast\psi\|_{\Ld^\infty(\R^d)}&\le&\delta^{-\tau}\||\cdot|^{\tau-d}\ast\psi\|_{\Ld^\infty(\R^d)}\\
&\lesssim&\tfrac1\tau\delta^{-\tau}\|\psi\|_{\Ld^1(\R^d)}^\frac\tau d\|\psi\|_{\Ld^\infty(\R^d)}^{1-\frac\tau d},
\end{eqnarray*}
and thus, after optimizing with respect to $\tau$,
\[\|(\delta+|\cdot|)^{-d}\ast\psi\|_{\Ld^\infty(\R^d)}\,\lesssim\,\|\psi\|_{\Ld^1\cap\Ld^\infty(\R^d)}\log(2+\delta^{-1}).\]
Now using~this estimate instead of~\eqref{eq:estim-Linfty-L1}, and repeating the proof of~\cite[Lemma~2.3]{Hofer-Schubert-23}, the second part of the statement follows similarly.
\end{proof}

We start with the following mean-field approximation.
This result is obtained in $\infty$-Wasserstein metric, which is best suited to analyze singular interactions, and it will be upgraded to weaker $p$-Wasserstein metric in Section~\ref{sec:conclusion}. 

\begin{prop}\label{prop:MFL}
Assume that there exist solutions $\tilde\nu_0,\tilde\nu_\lambda\in C([0,T];\Pc\cap\Ld^\infty(\R^d\times\Sp^{d-1}))$ of the following linear coupled transport equations up to some time $T>0$,
\begin{equation}\label{eq:tsp-munu}
\left\{\begin{array}{l}
\partial_t\tilde\nu_0+\Div_x(\tilde\nu_0 (\Gc\ast h))+\Div_r(\tilde\nu_0\tilde\Omega r)=0,\\
\partial_t\tilde\nu_\lambda+\Div_x(\tilde\nu_\lambda\tilde u_\lambda)+\Div_r(\tilde\nu_\lambda\tilde\Omega r)=0,\\
\tilde\nu_0|_{t=0}=
\tilde\nu_\lambda|_{t=0}=\nu^\circ,
\end{array}\right.
\end{equation}
where the translational and angular velocity fields $\tilde u_\lambda$ and $\tilde\Omega$ are given by
\begin{eqnarray}
\tilde u_\lambda(x)&:=&\big[\Gc\ast \big((1-\lambda\tilde\mu_0)h+\lambda\tilde\mu_0 e\big)\big](x)\label{eq:def-Vlim}\\
&&\qquad+\lambda\big[\nabla\Gc\ast\big(2\langle\Sigma^\circ\tilde\nu_0\rangle\D(\Gc\ast h)+\kappa_0 \langle \Sigma_f^\circ\tilde\nu_0\rangle\big)\big](x),\nonumber\\
\tilde \Omega(x,r)&:=&\Omega^\circ(r)\nabla(\Gc\ast h)(x),\nonumber
\end{eqnarray}
in terms of spatial densities
$\tilde\mu_0:=\langle\tilde\nu_0\rangle$ and
$\tilde\mu_\lambda:=\langle\tilde\nu_\lambda\rangle$.
We also set $\mu^\circ:=\langle\nu^\circ\rangle$.
Consider the $\infty$-Wasserstein distances between the empirical measures and these mean-field approximations $\tilde\mu_0,\tilde\nu_0$ or~$\tilde\mu_\lambda,\tilde\nu_\lambda$,
\begin{eqnarray}\label{eq:defin-WZinfty}
W_{\e,N}^\infty(t)&:=&W_\infty(\mu_{\e,N}^t,\tilde\mu_\lambda^t),\\
Z_{\e,N}^\infty(t)&:=&W_\infty(\nu_{\e,N}^t,\tilde\nu_\lambda^t)\vee W_\infty(\nu_{\e,N}^t,\tilde\nu_0^t).\nonumber
\end{eqnarray}
Assume that initial particle positions satisfy, for some $\theta_0\ge1$,
\begin{equation}\label{eq:cond-wellsep-prop52}
\dd_{\e,N}^{\min}(0)\,\ge\,(\theta_0N)^{-\frac1d},\qquad
Z_{\e,N}^\infty(0)\,\le\,\theta_0.
\end{equation}
Given $\theta>\theta_0$ with $(\theta N)^{-\frac1d}\ge4\e$ and $\lambda\theta\log(\theta+ N)\|\mu^\circ\|_{\Ld^1\cap\Ld^\infty(\R^d)}\ll1$ small enough, there is a maximal time $T_\theta>C_{h,\mu^\circ}^{-1}\log(\theta/\theta_0)$ such that we have for all $t\in[0,T_\theta\wedge T]$,
\begin{eqnarray}
Z_{\e,N}^\infty(t)&\lesssim_{t,h,\theta,\mu^\circ}&Z_{\e,N}^\infty(0)+\lambda\log N+\e,\label{eq:estim-fin-Z}\\
W_{\e,N}^\infty(t)&\lesssim_{t,h,\theta,\mu^\circ}&W_{\e,N}^\infty(0)
+\lambda|\!\log\lambda|Z_{\e,N}^\infty(0)\label{eq:estim-fin-W}\\
&&\quad+(\lambda|\!\log\lambda|+\e)(\lambda\log N+\e)+\kappa_0\e,\nonumber
\end{eqnarray}
and for any boundary-layer thickness $\delta\in[\e,1]$,
\begin{align}\label{eq:estim-fin-u}
&\|u_{\e,N}-\tilde u_\lambda\|_{\Ld^\infty(\R^d\setminus(\Ic_{\e,N}+\delta B))}
\,\lesssim_{t,h,\theta,\mu^\circ}\,
\lambda|\!\log\lambda|Z_{\e,N}^\infty(0)\\
&\hspace{5cm}+(\lambda|\!\log\lambda|+\e)(\lambda\log N+\e)
+(\tfrac\e\delta)^d(\lambda+ \delta).\nonumber\qedhere
\end{align}
\end{prop}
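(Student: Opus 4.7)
The plan is to run a buckling argument in the spirit of Hauray--Jabin and Höfer--Schubert, exploiting the stability estimates of Lemma~\ref{lem:Richard} together with the dilute expansion of particle velocities from Proposition~\ref{prop:dilute-exp}. Define the stopping time
\[T_\theta\,:=\,\inf\big\{t\ge0\,:\,\dd_{\e,N}^{\min}(t)<(\theta N)^{-1/d}\text{ or }Z_{\e,N}^\infty(t)>\theta\big\}.\]
On $[0,T_\theta\wedge T]$, applying Lemma~\ref{lem:Richard} with $\mu=\tilde\mu_0$ (whose $\Ld^1\cap\Ld^\infty$ norm is controlled by $\|\mu^\circ\|_{\Ld^1\cap\Ld^\infty}$ via push-forward along the smooth flow $\Gc\ast h$) yields $\alpha^\sigma_{\e,N}\lesssim_{h,\mu^\circ,\theta}1$ for $\sigma\in(0,d]$ and $\alpha^0_{\e,N}\lesssim_{h,\mu^\circ,\theta}\log N$. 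The smallness hypothesis on $\lambda\theta\log(\theta+N)\|\mu^\circ\|_{\Ld^1\cap\Ld^\infty}$ then ensures $\lambda\alpha^0_{\e,N}\ll1$, so the results of Sections~\ref{sec:Lip}--\ref{sec:cluster} apply throughout $[0,T_\theta\wedge T]$.

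The first step is to prove~\eqref{eq:estim-fin-Z} by comparing the particle ODEs $\partial_tX_{\e,N}^n=V_{\e,N}^n$, $\partial_tR_{\e,N}^n=\Omega_{\e,N}^nR_{\e,N}^n$ to the characteristic flow of the $\tilde\nu_0$-equation in~\eqref{eq:tsp-munu}. The crucial point is that the drift $(\Gc\ast h,\Omega^\circ(r)\nabla(\Gc\ast h)r)$ is $\lambda$-independent and globally Lipschitz (by standard regularity for the Stokes system with smooth compactly supported $h$). The first-order dilute expansions of Proposition~\ref{prop:dilute-exp}, together with the above bounds on $\alpha^0_{\e,N},\alpha^1_{\e,N}$, show that particle velocities match these drifts up to errors of size $O(\lambda\log N+\e)$. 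A direct Gronwall estimate on the maximal displacement between particles and the characteristics emanating from the optimal initial coupling then yields $W_\infty(\nu_{\e,N},\tilde\nu_0)\lesssim_{t,h,\theta,\mu^\circ}Z_{\e,N}^\infty(0)+\lambda\log N+\e$; the analogous bound on $W_\infty(\nu_{\e,N},\tilde\nu_\lambda)$ follows by triangle inequality since $W_\infty(\tilde\nu_0,\tilde\nu_\lambda)=O(\lambda)$ by direct stability of the two linear transport equations whose drifts differ by $O(\lambda)$.

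For the sharper bound~\eqref{eq:estim-fin-W} on $W_{\e,N}^\infty$, I would use the second-order expansion of Proposition~\ref{prop:dilute-exp} to write $V_{\e,N}^n=\tilde u_\lambda(X_{\e,N}^n)+\text{error}$, where the error decomposes into (i)~the intrinsic remainder $O((\lambda+\e)(\lambda\log N+\e))$, (ii)~the $O(\kappa_0\e)$ swimming contribution, and (iii)~a consistency term obtained by replacing $\nu_{\e,N}$ and $\mu_{\e,N}$ with $\tilde\nu_0$ and $\tilde\mu_0$ inside the singular convolutions $\Gc\,\hat\ast\,\cdot$ and $\nabla\Gc\,\hat\ast\,\cdot$. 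The consistency term is controlled by transporting along the optimal coupling between $\nu_{\e,N}$ and $\tilde\nu_0$: since the first-variation kernel is $\nabla^2\Gc$ with Coulomb-critical singularity, the relevant sum $\tfrac1N\sum_m|X_{\e,N}^n-X_{\e,N}^m|^{-d}\mathds1_{|X_{\e,N}^n-X_{\e,N}^m|\lesssim Z_{\e,N}^\infty}$ is bounded via the second part of Lemma~\ref{lem:Richard} by $|\!\log Z_{\e,N}^\infty|$, producing the loss $\lambda|\!\log\lambda|Z_{\e,N}^\infty$. A Gronwall estimate along the log-Lipschitz drift $\tilde u_\lambda$ (whose gradient satisfies $\|\nabla\tilde u_\lambda\|_{\Ld^\infty}\lesssim_h 1$ up to logarithms, through the bound $\|\nabla^2\Gc\ast g\|_{\Ld^\infty}\lesssim\|g\|_{\Ld^1\cap\Ld^\infty}|\!\log\|g\||$ for bounded compactly supported $g$) then closes~\eqref{eq:estim-fin-W}. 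The fluid-velocity estimate~\eqref{eq:estim-fin-u} follows by applying the same consistency argument to the last conclusion of Proposition~\ref{prop:dilute-exp}.

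Finally, the buckling is closed by feeding these estimates back into the defining inequalities for $T_\theta$: the bound~\eqref{eq:estim-fin-Z} keeps $Z_{\e,N}^\infty$ below $\theta$ for a time at least $C_{h,\mu^\circ}^{-1}\log(\theta/\theta_0)$, and the log-Lipschitz bound on $\tilde u_\lambda$ combined with $\partial_tX_{\e,N}^n=\tilde u_\lambda(X_{\e,N}^n)+o(1)$ controls the contraction of $\dd_{\e,N}^{\min}$ on the same time scale, yielding the required lower bound on $T_\theta$. The principal technical difficulty is the critical nature of the kernel $\nabla^2\Gc$: it forces every Lipschitz-type estimate on $\tilde u_\lambda$ and every stability estimate for the convolution $\nabla\Gc\ast\cdot$ to carry an unavoidable logarithmic loss, and it is only the hypothesized smallness of $\lambda\theta\log(\theta+N)\|\mu^\circ\|_{\Ld^1\cap\Ld^\infty}$ that makes the Gronwall argument close perturbatively without degrading the final accuracy.
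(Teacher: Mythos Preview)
Your overall strategy matches the paper's: a buckling argument combining Lemma~\ref{lem:Richard} with the dilute expansions of Proposition~\ref{prop:dilute-exp}, split into a first-order Gronwall for $Z_{\e,N}^\infty$ and a second-order one for $W_{\e,N}^\infty$. The decomposition of the $W$-error into an intrinsic remainder, the swimming term, and a consistency term, together with the use of the $\sigma=0$ case of Lemma~\ref{lem:Richard} to handle the critical kernel $\nabla^2\Gc$, is exactly what the paper does in its Step~2.

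There is, however, a genuine gap in how you close the buckling on $\dd_{\e,N}^{\min}$. You propose to write $\partial_t X_{\e,N}^n=\tilde u_\lambda(X_{\e,N}^n)+o(1)$ and use the log-Lipschitz bound on $\tilde u_\lambda$. This fails: the $o(1)$ here is precisely your error (i)+(ii)+(iii), which is of order $\lambda|\!\log\lambda|Z_{\e,N}^\infty(0)+\kappa_0\e+\ldots$, and there is no reason for this to be small compared to $\dd_{\e,N}^{\min}\sim N^{-1/d}$ (for instance, $\lambda|\!\log\lambda|Z_{\e,N}^\infty(0)$ can dominate $N^{-1/d}$ by an arbitrary factor). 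The differential inequality $\tfrac{d}{dt}\dd_{\e,N}^{\min}\ge -C\dd_{\e,N}^{\min}-2E$ with $E\gg\dd_{\e,N}^{\min}$ gives no useful lower bound. The paper instead appeals directly to Proposition~\ref{prop:Lip-est}, which yields $\|\nabla u_{\e,N}\|_{\Ld^\infty}\lesssim_h1$ on the buckling interval and hence the clean contraction estimate $\tfrac{d}{dt}\dd_{\e,N}^{\min}\ge-C_h\dd_{\e,N}^{\min}$ with no additive error. This is what gives $T_\theta\ge C_{h,\mu^\circ}^{-1}\log(\theta/\theta_0)$.

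A secondary point: for the $W_{\e,N}^\infty$ estimate you invoke the log-Lipschitz regularity of $\tilde u_\lambda$ to compare $\tilde u_\lambda(x)$ and $\tilde u_\lambda(X_{\e,N}^n)$, which introduces a nonlinear term $\lambda W|\!\log W|$ into the Gronwall inequality. The paper avoids this by not splitting $|\tilde u_\lambda(x)-V_{\e,N}^n|$ through the intermediate point $\tilde u_\lambda(X_{\e,N}^n)$: instead it compares term by term, so that the leading $\Gc\ast h$ part (genuinely Lipschitz) produces the linear coefficient in front of $W_{\e,N}^\infty$, while the $O(\lambda)$ correction terms are estimated via the coupling between $\tilde\nu_0$ and $\nu_{\e,N}$ and thus contribute only through $Z_{\e,N}^\infty$. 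Your version can still be closed (since $W_{\e,N}^\infty\le Z_{\e,N}^\infty$ and hence $\lambda W|\!\log W|$ is absorbed into the $\lambda Z|\!\log Z|$ term for small $Z$), but it requires an extra argument that you have not supplied, and the paper's route is both cleaner and what actually yields~\eqref{eq:estim-fin-W} without loss.
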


\begin{proof}
As $\tilde\mu_\lambda$ and $\tilde\nu_\lambda$ are absolutely continuous, the $\infty$-Wasserstein distances $W_\infty(\mu_{\e,N},\tilde\mu_\lambda)$ and $W_\infty(\nu_{\e,N},\tilde\nu_\lambda)$ are known to admit optimal transport maps (see~\cite{Champion-08}).
For all $t\in[0,T]$, we denote by $\hat X_{\e,N;\lambda}^t:\R^d\to\R^d$ an optimal transport map such that
\begin{equation}\label{eq:def-hatX}
(\hat X_{\e,N;\lambda}^t)_*\tilde\mu_\lambda^t\,=\,\mu_{\e,N}^t,\qquad W_{\e,N}^\infty(t)\,=\,W^\infty(\mu_{\e,N}^t,\tilde\mu_\lambda^t)\,=~~\supessd{x;\tilde\mu_\lambda^t}|x-\hat X_{\e,N;\lambda}^t(x)|.
\end{equation}
We also denote by $\hat X_{\e,N;0}^t:\R^d\to\R^d$ an optimal transport map such that
\begin{equation}\label{eq:def-hatX00}
(\hat X_{\e,N;0}^t)_*\tilde\mu_0^t\,=\,\mu_{\e,N}^t,\qquad W^\infty(\mu_{\e,N}^t,\tilde\mu_0^t)\,=~~\supessd{x;\tilde\mu_0^t}|x-\hat X_{\e,N;0}^t(x)|,
\end{equation}
and by $(\tilde X_{\e,N;0}^t,\tilde R_{\e,N;0}^t):\R^d\times\Sp^{d-1}\to\R^d\times\Sp^{d-1}$ an optimal transport map such that 
\begin{gather}
(\tilde X_{\e,N;0}^t,\tilde R_{\e,N;0}^t)_*\tilde\nu_0^t\,=\,\nu_{\e,N}^t,\label{eq:def-tildeXR}\\
W_\infty(\nu_{\e,N}^t,\tilde\nu_0^t)\,=~~\supessd{x,r;\tilde\nu_0^t}\Big(|x-\tilde X_{\e,N;0}^t(x,r)|+|r-\tilde R_{\e,N;0}^t(x,r)|\Big).\nonumber
\end{gather}
Here, we use the notation $\supess_{x;\mu}g(x)$ for the essential supremum of a function $g$ with respect to a measure $\mu$.
Also note that averaging the mean-field transport equation~\eqref{eq:tsp-munu} over $r$ yields
\begin{equation}\label{eq:equation-tildemu}
\partial_t\tilde\mu_0+\Div_x(\tilde\mu_0(\Gc\ast h))=0,\qquad\partial_t\tilde\mu_\lambda+\Div_x(\tilde\mu_\lambda\tilde u_\lambda)=0,
\end{equation}
for which the following a priori estimate holds,
\begin{equation}\label{eq:estim-tildemu}
\sup_t\|\tilde\mu_0^t\|_{\Ld^1\cap\Ld^\infty(\R^d)}\,=\,\sup_t\|\tilde\mu_\lambda^t\|_{\Ld^1\cap\Ld^\infty(\R^d)}\,=\,\|\mu^\circ\|_{\Ld^1\cap\Ld^\infty(\R^d)}.
\end{equation}
With these notations and observations, we now split the proof into four steps.

\medskip
\step1 First-order control on $Z_{\e,N}^\infty=W_\infty(\nu_{\e,N},\tilde\nu_\lambda)\vee W_\infty(\nu_{\e,N},\tilde\nu_0)$: proof that
\begin{equation}\label{eq:control-Zinfty}
\tfrac{d^+}{dt} Z_{\e,N}^\infty\,\lesssim_{h,\mu^\circ}\,Z_{\e,N}^\infty+\lambda(\alpha_{\e,N}^0+1)+\e,
\end{equation}
where $\tfrac{d^+}{dt}$ stands for the right-derivative.

\medskip\noindent
We focus on the bound on $W_\infty(\nu_{\e,N},\tilde\nu_0)$, while the argument is analogous for~$W_\infty(\mu_{\e,N},\tilde\mu_\lambda)$.
In view of the particle dynamics~\eqref{eq:dynamics} and of the mean-field transport equation~\eqref{eq:tsp-munu}, we can estimate the (right) time-derivative of the $\infty$-Wasserstein distance~\eqref{eq:def-tildeXR} by using characteristics, as e.g.\@ in~\cite{Carrillo-Choi-Hauray-14}, to the effect of
\begin{multline*}
\tfrac{d^+}{dt} W_\infty(\nu_{\e,N},\tilde\nu_0)\,\le~~\supessd{x,r;\tilde\nu_0}\Big(|(\Gc\ast h)(x)-V_{\e,N}(\tilde X_{\e,N;0}(x,r))|\\
+\big|\tilde\Omega(x,r)r-\Omega_{\e,N}(\tilde X_{\e,N;0}(x,r))\tilde R_{\e,N;0}(x,r)\big|\Big),
\end{multline*}
where we have defined $V_{\e,N},\Omega_{\e,N}$ by setting $V_{\e,N}(X_{\e,N}^n)=V_{\e,N}^n$ and $\Omega_{\e,N}(X_{\e,N}^n)=\Omega_{\e,N}^n$ for all~$1\le n\le N$.
Inserting the first-order expansions of particle translational and angular velocities stated, cf.\@ Proposition~\ref{prop:dilute-exp}, as well as the definition~\eqref{eq:def-Vlim} of the limiting angular velocity, we deduce
\begin{multline*}
\tfrac{d^+}{dt} W_\infty(\nu_{\e,N},\tilde\nu_0)\,\lesssim_h~~
\supessd{x,r;\tilde\nu_0}\Big(\big|(\Gc\ast h)(x)-(\Gc\ast h)(\tilde X_{\e,N;0}(x,r))\big|\\
+\big|\big(\Omega^\circ(r)\nabla(\Gc\ast h)(x)\big) r-\big(\Omega^\circ(\tilde R_{\e,N;0}(x,r))\nabla(\Gc\ast h)(\tilde X_{\e,N;0}(x,r))\big)\tilde R_{\e,N;0}(x,r)\big|\Big)\\
+\lambda(\alpha_{\e,N}^0+1)+\e.
\end{multline*}
Using the Lipschitz continuity of $\Gc\ast h$, $\nabla(\Gc\ast h)$, and $\Omega^\circ$, we deduce
\[\tfrac{d^+}{dt}W_\infty(\nu_{\e,N},\tilde\nu_0)\,\lesssim_h\,W_\infty(\nu_{\e,N},\tilde\nu_0)+\lambda(\alpha_{\e,N}^0+1)+\e.\]
Arguing similarly for $W_{\infty}(\nu_{\e,N},\tilde\nu_\lambda)$ (and further using the a priori estimate~\eqref{eq:estim-tildemu}), the claim~\eqref{eq:control-Zinfty} follows.

\medskip
\step2 Second-order control on $W_{\e,N}^\infty=W_\infty(\mu_{\e,N},\tilde\mu_\lambda)$: proof that
\begin{multline}\label{eq:control-Winfty-2}
\tfrac{d^+}{dt}W_{\e,N}^\infty\,\lesssim_{h,\mu^\circ}\,W_{\e,N}^\infty
+(\lambda\alpha_{\e,N}^1+\e)\big(\lambda(\alpha_{\e,N}^0+1)+\e\big)+\kappa_0\e\\
+\lambda Z_{\e,N}^\infty\Big(\log\big(2+\tfrac1{Z_{\e,N}^\infty}\big)+\alpha_{\e,N}^1
+(N^\frac1d\dd_{\e,N}^{\min})^{1-d}\Big).
\end{multline}
Comparing the transport equation~\eqref{eq:equation-tildemu} with the particle dynamics~\eqref{eq:dynamics}, we can again estimate the time-derivative of the Wasserstein distance~\eqref{eq:def-hatX} by means of characteristics,
\begin{equation*}
\tfrac{d^+}{dt}W_{\e,N}^\infty\,\le~~\supessd{x;\tilde\mu_\lambda}|\tilde u_\lambda(x)-V_{\e,N}(\hat X_{\e,N;\lambda}(x))|.
\end{equation*}
Inserting the second-order expansion of particle velocities stated in Proposition~\ref{prop:dilute-exp}, as well as the definition~\eqref{eq:def-Vlim} of limiting velocities, we deduce
\begin{multline}\label{eq:estim-W-0}
\tfrac{d^+}{dt}W_{\e,N}^\infty\,\lesssim_h~~\supessd{x;\tilde\mu_\lambda}\big(T_1(x)+\lambda T_2(x)+\kappa_0\lambda T_3(x)+\lambda T_4(x)\big)\\
+(\lambda\alpha_{\e,N}^1+\e)\big(\lambda(\alpha_{\e,N}^0+1)+\e\big)+\kappa_0\e,
\end{multline}
in terms of
\begingroup\allowdisplaybreaks
\begin{eqnarray*}
T_1(x)&:=&\big|(\Gc\ast h)(x)-(\Gc\ast h)(\hat X_{\e,N;\lambda}(x))\big|,\\
T_2(x)&:=&\Big|\big[\Gc\ast(\tilde\mu_0 (e-h))\big](x)-\big[\Gc\,\hat\ast\, (\mu_{\e,N}(e-h))\big](\hat X_{\e,N;\lambda}(x))\Big|,\\
T_3(x)&:=&\Big|\big[\nabla\Gc\ast(\langle\Sigma_f^\circ\tilde\nu_0\rangle)\big](x)-\big[\nabla\Gc\,\hat\ast\,(\langle\Sigma_f^\circ\nu_{\e,N}\rangle)\big](\hat X_{\e,N;\lambda}(x))\Big|,\\
T_4(x)&:=&\Big|\big[\nabla\Gc\ast(\langle\Sigma^\circ\tilde\nu_0\rangle\D(\Gc\ast h))\big](x)-\big[\nabla\Gc\,\hat\ast\, (\langle\Sigma^\circ\nu_{\e,N}\rangle\D(\Gc\ast h))\big](\hat X_{\e,N;\lambda}(x))\Big|.
\end{eqnarray*}
\endgroup
We analyze these different terms separately. First, as in Step~1, using the Lipschitz continuity of~$\Gc\ast h$, we find
\[\sup T_1\,\lesssim_h\,W_{\e,N}^\infty.\]
We turn to $T_2$. Recalling $(\hat X_{\e,N;0})_*\tilde\mu_0=\mu_{\e,N}$, this term can be bounded by
\begin{multline*}
T_2(x)\,\le\,
\int_{\R^d}\Big|\Gc(x-y)\,(e-h(y))\\
-(\mathds1_{\ne}\Gc)(\hat X_{\e,N;\lambda}(x)-\hat X_{\e,N;0}(y))\,(e-h(\hat X_{\e,N;0}(y)))\Big|\,\tilde\mu_0(y)\,dy,
\end{multline*}
where we use the short-hand notation $(\mathds1_{\ne}\Gc)(x-y):=\Gc(x-y)\,\mathds1_{x\ne y}$.
Using the Lipschitz continuity of $h$ and the pointwise decay of the Stokeslet, we deduce
\begin{equation*}
T_2(x)\,\lesssim_h\,Z_{\e,N}^\infty\|\tilde\mu_0\|_{\Ld^1\cap\Ld^\infty(\R^d)}
+\int_{\R^d}\Big|\Gc(x-y)-(\mathds1_{\ne}\Gc)(\hat X_{\e,N;\lambda}(x)-\hat X_{\e,N;0}(y))\Big|\,\tilde\mu_0(y)\,dy.
\end{equation*}
Further using the pointwise decay of the Stokeslet in form of
\begin{equation}\label{eq:pointwise-diff-G}
|\Gc(a)-\Gc(a')|\,\lesssim\,|a-a'|\big(|a|^{1-d}+|a'|^{1-d}\big),
\end{equation}
this becomes
\begin{multline*}
T_2(x)\,\lesssim_h\,Z_{\e,N}^\infty\|\tilde\mu_0\|_{\Ld^1\cap\Ld^\infty(\R^d)}\\
+Z_{\e,N}^\infty\int_{\R^d}\Big(|x-y|^{1-d}+|\hat X_{\e,N;\lambda}(x)-\hat X_{\e,N;0}(y)|^{1-d}\,\mathds1_{\hat X_{\e,N;\lambda}(x)\ne\hat X_{\e,N;0}(y)}\Big)\,
\tilde\mu_0(y)\,dy,
\end{multline*}
and thus, recalling $(\hat X_{\e,N;0})_*\tilde\mu_0=\mu_{\e,N}$,
\begin{eqnarray*}
T_2(x)
&\lesssim_h&Z_{\e,N}^\infty\bigg(\|\tilde\mu_0\|_{\Ld^1\cap\Ld^\infty(\R^d)}+\tfrac1N\sum_{m:X_{\e,N}^m\ne\hat X_{\e,N;\lambda}(x)}|\hat X_{\e,N;\lambda}(x)-X_{\e,N}^m|^{1-d}\bigg)\\
&\le&Z_{\e,N}^\infty\big(\alpha_{\e,N}^1+\|\tilde\mu_0\|_{\Ld^1\cap\Ld^\infty(\R^d)}\big).
\end{eqnarray*}
We turn to the third term $T_3$ in~\eqref{eq:estim-W-0}.
Using the optimal transport map~$(\tilde X_{\e,N;0},\tilde R_{\e,N;0})$ given by~\eqref{eq:def-tildeXR}, with $(\tilde X_{\e,N;0},\tilde R_{\e,N;0})_*\tilde\nu_0=\nu_{\e,N}$, and recalling that $\Sigma_f^\circ$ is smooth (it is actually quadratic, cf.~\eqref{eq:def-ugamma-sym}), we can estimate
\begin{eqnarray*}
T_3(x)&\le&\iint_{\R^d\times\Sp^{d-1}}\Big|\nabla\Gc(x-y)\Sigma_f^\circ(r)\\
&&\hspace{1cm}-(\mathds1_{\ne}\nabla\Gc)\big(\hat X_{\e,N;\lambda}(x)-\tilde X_{\e,N;0}(y,r)\big)\Sigma_f^\circ(\tilde R_{\e,N;0}(y,r))\Big|\,\tilde\nu_0(y,r)\,dyd\sigma(r)\\
&\lesssim&Z_{\e,N}^\infty\|\tilde\mu_0\|_{\Ld^1\cap\Ld^\infty(\R^d)}\\
&&+\iint_{\R^d\times\Sp^{d-1}}\Big|\nabla\Gc(x-y)-(\mathds1_{\ne}\nabla\Gc)\big(\hat X_{\e,N;\lambda}(x)-\tilde X_{\e,N;0}(y,r)\big)\Big|\,\tilde\nu_0(y,r)\,dyd\sigma(r).
\end{eqnarray*}
We turn to the analysis of this last integral and we note that the argument for~$T_2$ cannot be repeated as $\nabla^2\Gc$ is not locally integrable.
Instead, we start by splitting the integral into two parts, distinguishing between the contributions of \mbox{$|x-y|\ge4Z_{\e,N}^\infty$} and~\mbox{$|x-y|<4Z_{\e,N}^\infty$}.
On the one hand, using the pointwise decay of the Stokeslet in form of
\begin{equation}\label{eq:decay-St-nabG-G}
|\nabla\Gc(a)-\nabla\Gc(a')|\,\lesssim\,|a-a'|\big(|a|^{-d}+|a'|^{-d}\big),
\end{equation}
and noting that the condition $|x-y|\ge4Z_{\e,N}^\infty$ implies
\[|\hat X_{\e,N;\lambda}(x)-\tilde X_{\e,N;0}(y,r)|\,\ge\,|x-y|-2Z_{\e,N}^\infty\,\ge\,\tfrac12|x-y|,\]
we find
\begin{align*}
&{\iint_{\{(y,r):|x-y|\ge4Z_{\e,N}^\infty\}}\Big|\nabla\Gc(x-y)-(\mathds1_{\ne}\nabla\Gc)(\hat X_{\e,N;\lambda}(x)-\tilde X_{\e,N;0}(y,r))\Big|\,\tilde\nu_0(y,r)\,dyd\sigma(r)}\\
&\quad\lesssim~Z_{\e,N}^\infty\int_{\{y:|x-y|\ge4Z_{\e,N}^\infty\}}|x-y|^{-d}\,\tilde\mu_0(y)\,dy\\
&\quad\lesssim~Z_{\e,N}^\infty\log\big(2+\tfrac1{Z_{\e,N}^\infty}\big)\|\tilde\mu_0\|_{\Ld^1\cap\Ld^\infty(\R^d)}.
\end{align*}
On the other hand, for the other contribution, using the pointwise decay of the Stokeslet and recalling $(\tilde X_{\e,N;0},\tilde R_{\e,N,0})_*\tilde\nu_0=\nu_{\e,N}$, we can bound
\begin{multline*}
{\iint_{\{(y,r):|x-y|\le4Z_{\e,N}^\infty\}}\Big|\nabla\Gc(x-y)-(\mathds1_{\ne}\nabla\Gc)(\hat X_{\e,N;\lambda}(x)-\tilde X_{\e,N;0}(y,r))\Big|\,\tilde\nu_0(y,r)\,dyd\sigma(r)}\\
\,\lesssim\,\int_{\{y:|x-y|\le4Z_{\e,N}^\infty\}}|x-y|^{1-d}\,\tilde\mu_0(y)\,dy\\
+\tfrac1N\sum_{m:X_{\e,N}^m\ne \hat X_{\e,N;\lambda}(x)}|\hat X_{\e,N;\lambda}(x)-X_{\e,N}^m|^{1-d}\,\mathds1_{|\hat X_{\e,N;\lambda}(x)-X_{\e,N}^m|\le6Z_{\e,N}^\infty},
\end{multline*}
and thus, by Lemma~\ref{lem:Richard},
\begin{multline*}
{\iint_{\{(y,r):|x-y|\le4Z_{\e,N}^\infty\}}\Big|\nabla\Gc(x-y)-(\mathds1_{\ne}\nabla\Gc)(\hat X_{\e,N;\lambda}(x)-\tilde X_{\e,N;0}(y,r))\Big|\,\tilde\nu_0(y,r)\,dyd\sigma(r)}\\
\,\lesssim\,Z_{\e,N}^\infty\big(1+(N^\frac1dd_{\e,N}^{\min})^{1 -d}\big)\|\tilde\mu_0\|_{\Ld^1\cap\Ld^\infty(\R^d)}.
\end{multline*}
Combining these two estimates, we deduce
\begin{equation*}
\sup T_3\,\lesssim\,Z_{\e,N}^\infty\Big(\log\big(2+\tfrac1{Z_{\e,N}^\infty}\big)
+(N^\frac1d\dd_{\e,N}^{\min})^{1-d}\Big)\|\tilde\mu_0\|_{\Ld^1\cap\Ld^\infty(\R^d)}.
\end{equation*}
Noting that the same bound holds for $T_4$,
and recalling the a priori estimate~\eqref{eq:estim-tildemu},
the claim~\eqref{eq:control-Winfty-2} follows.

\medskip
\step3 Approximation of the fluid velocity: proof that for any $\delta\in[\e,1]$,
\begin{multline}\label{eq:estim-udiff-err}
\|u_{\e,N}-\tilde u_\lambda\|_{\Ld^\infty(\R^d\setminus(\Ic_{\e,N}+\delta B))}
\,\lesssim_{h,\mu^\circ}\,\lambda(\lambda\alpha_{\e,N}^1+\e)(\alpha_{\e,N}^0+1)\\
+\lambda Z_{\e,N}^\infty\Big(\log\big(2+\tfrac1{Z_{\e,N}^\infty}\big)+\alpha_{\e,N}^1+(N^\frac1d\dd_{\e,N}^{\min})^{1-d}\Big)
+(\tfrac\e\delta)^{d}\delta(1+ Z_{\e,N}^\infty).
\end{multline}
We start with the expansion of the fluid velocity away from the particles as stated in Proposition~\ref{prop:dilute-exp}: for any $\delta\in[\e,1]$, we have
\begin{multline*}
\Big\|u_{\e,N}-\Gc\ast \big((1-\lambda\mu_{\e,N})h+\lambda\mu_{\e,N}e\big)\\
-\lambda\nabla\Gc\ast\big(2\langle\Sigma^\circ\nu_{\e,N}\rangle\!\D(\Gc\ast h)
+\kappa_0\langle\Sigma^\circ_f\nu_{\e,N}\rangle\big)\Big\|_{\Ld^\infty(\R^d\setminus(\Ic_{\e,N}+\delta B))}\\
\,\lesssim_h\,\lambda(\lambda\alpha_{\e,N}^1+\e)(\alpha_{\e,N}^0+1)+\e(\tfrac\e\delta)^d.
\end{multline*}
Replacing $\mu_{\e,N},\nu_{\e,N}$ by $\tilde\mu_0,\tilde\nu_0$ and recognizing the definition of $\tilde u_\lambda$, this yields
\begin{equation*}
\|u_{\e,N}-\tilde u_\lambda\|_{\Ld^\infty(\R^d\setminus(\Ic_{\e,N}+\delta B))}
\,\lesssim_h\,\lambda(S_1+S_2+\kappa_0S_3)
+\lambda(\lambda\alpha_{\e,N}^1+\e)(\alpha_{\e,N}^0+1)+\e(\tfrac\e\delta)^d,
\end{equation*}
in terms of
\begin{eqnarray*}
S_1&:=&\big\|\Gc\ast \big((\mu_{\e,N}-\tilde\mu_0)(e-h)\big)\big\|_{\Ld^\infty(\R^d\setminus(\Ic_{\e,N}+\delta B))},\\
S_2&:=&\big\|\nabla\Gc\ast\big(\langle\Sigma^\circ(\nu_{\e,N}-\tilde\nu_0)\rangle\!\D(\Gc\ast h)\big)\big\|_{\Ld^\infty(\R^d\setminus(\Ic_{\e,N}+\delta B))},\\
S_3&:=&\big\|\nabla\Gc\ast\langle\Sigma^\circ_f(\nu_{\e,N}-\nu_0)\rangle\big\|_{\Ld^\infty(\R^d\setminus(\Ic_{\e,N}+\delta B))}.
\end{eqnarray*}
We separately analyze these three contributions.
Using the optimal transport map~$\hat X_{\e,N;0}$ given by~\eqref{eq:def-hatX00}, with $(\hat X_{\e,N;0})_*\tilde\mu_0=\mu_{\e,N}$, we can write
\begin{equation*}
S_1
\,\le\,\sup_{x\in\R^d\setminus(\Ic_{\e,N}+\delta B)}\int_{\R^d}\Big|\Gc(x-y)(e-h(y))-\Gc(x-\hat X_{\e,N;0}(y))(e-h(\hat X_{\e,N;0}(y)))\Big|\,\tilde\mu_0(y)\,dy.
\end{equation*}
Using the Lipschitz continuity of $h$ and the pointwise decay of the Stokeslet, in particular in form of~\eqref{eq:pointwise-diff-G}, we deduce
\begin{eqnarray*}
S_1
&\lesssim_h&Z_{\e,N}^\infty\|\tilde\mu_0\|_{\Ld^1\cap\Ld^\infty(\R^d)}+\sup_{x\in\R^d\setminus(\Ic_{\e,N}+\delta B)}\int_{\R^d}\big|\Gc(x-y)-\Gc(x-\hat X_{\e,N;0}(y))\big|\,\tilde\mu_0(y)\,dy\\
&\lesssim&Z_{\e,N}^\infty\bigg(\|\tilde\mu_0\|_{\Ld^1\cap\Ld^\infty(\R^d)}+\sup_{x\in\R^d\setminus(\Ic_{\e,N}+\delta B)}\int_{\R^d}|x-\hat X_{\e,N}^0(y)|^{1-d}\,\tilde\mu_0(y)\,dy\bigg).
\end{eqnarray*}
Recalling $(\hat X_{\e,N}^0)_*\tilde\mu_0=\mu_{\e,N}$ and estimating the sum as in~\eqref{eq:bnd-x-alpha-reint}, we obtain
\begin{equation*}
S_1\,\lesssim_h\,Z_{\e,N}^\infty\Big(\|\tilde\mu_0\|_{\Ld^1\cap\Ld^\infty(\R^d)}+\alpha_{\e,N}^1+\tfrac1\lambda\delta(\tfrac\e\delta)^{d}\Big).
\end{equation*}
We turn to $S_2$. Using the optimal transport map~$(\tilde X_{\e,N;0},\tilde R_{\e,N;0})$ given by~\eqref{eq:def-tildeXR}, we can write
\begin{multline*}
S_2\,\le\,\sup_{x\in\R^d\setminus(\Ic_{\e,N}+\delta B)}\iint_{\R^d\times\Sp^{d-1}}\Big|\nabla\Gc(x-y)\Sigma_0(r)\D(\Gc\ast h)(y)\\
-\nabla\Gc(x-\tilde X_{\e,N}^0(y))\Sigma_0(\tilde R_{\e,N}^0(r))\D(\Gc\ast h)(\tilde X_{\e,N}^0(y))\Big|\,\tilde\nu_0(y,r)\,dyd\sigma(r).
\end{multline*}
Using the Lipschitz continuity of $\Sigma_0$ and $\nabla\Gc\ast h$, and using the pointwise decay of the Stokeslet, we find
\begin{multline}\label{eq:estim-S2-presplit}
S_2\,\lesssim_h\,Z_{\e,N}^\infty\|\tilde\mu_0\|_{\Ld^1\cap\Ld^\infty(\R^d)}\\
+\sup_{x\in\R^d\setminus(\Ic_{\e,N}+\delta B)}\iint_{\R^d\times\Sp^{d-1}}\Big|\nabla\Gc(x-y)
-\nabla\Gc(x-\tilde X_{\e,N;0}(y))\Big|\,\tilde\nu_0(y,r)\,dyd\sigma(r).
\end{multline}
To estimate this last integral, we split it into two parts, distinguishing between the contributions of $|x-y|\ge2Z_{\e,N}^\infty$ and $|x-y|<2Z_{\e,N}^\infty$. On the one hand, using the pointwise decay of the Stokeslet in form of~\eqref{eq:decay-St-nabG-G}, and noting that the condition $|x-y|\ge2Z_{\e,N}^\infty$ implies $|x-\tilde X_{\e,N;0}(y,r)|\ge\frac12|x-y|$, we get
\begin{align*}
&\sup_{x\in\R^d\setminus(\Ic_{\e,N}+\delta B)}\iint_{\{(y,r):|x-y|\ge2Z_{\e,N}^\infty\}}\Big|\nabla\Gc(x-y)
-\nabla\Gc(x-\tilde X_{\e,N;0}(y))\Big|\,\tilde\nu_0(y,r)\,dyd\sigma(r)\\
&\hspace{2cm}\lesssim~Z_{\e,N}^\infty\sup_{x\in\R^d}\int_{\{y:|x-y|\ge2Z_{\e,N}^\infty\}}|x-y|^{-d}\,\tilde\mu_0(y)\,dy\\
&\hspace{2cm}\lesssim~Z_{\e,N}^\infty\log\big(2+\tfrac1{Z_{\e,N}^\infty}\big)\|\tilde\mu_0\|_{\Ld^1\cap\Ld^\infty(\R^d)}.
\end{align*}
On the other hand, recalling $(\tilde X_{\e,N;0},\tilde R_{\e,N;0})_*\tilde\nu_0=\nu_{\e,N}$ and using the pointwise decay of the Stokeslet, we can bound
\begin{eqnarray*}
\lefteqn{\sup_{x\in\R^d\setminus(\Ic_{\e,N}+\delta B)}\iint_{\{(y,r):|x-y|<2Z_{\e,N}^\infty\}}\Big|\nabla\Gc(x-y)
-\nabla\Gc(x-\tilde X_{\e,N;0}(y))\Big|\,\tilde\nu_0(y,r)\,dyd\sigma(r)}\\
&\lesssim&\sup_{x\in\R^d\setminus(\Ic_{\e,N}+\delta B)}\iint_{\{(y,r):|x-y|<2Z_{\e,N}^\infty\}}\Big(|x-y|^{1-d}+|x-\tilde X_{\e,N;0}(y,r)|^{1-d}\Big)\,\tilde\nu_0(y,r)\,dyd\sigma(r)\\
&\lesssim&Z_{\e,N}^\infty\|\tilde\mu_0\|_{\Ld^\infty(\R^d)}+\sup_{x\in\R^d\setminus(\Ic_{\e,N}+\delta B)}\tfrac1N\sum_{n=1}^N|x- X_{\e,N}^n|^{1-d}\,\mathds1_{|x-X_{\e,N}^n|\le 3Z_{\e,N}^\infty},
\end{eqnarray*}
and thus, estimating the last sum as in~\eqref{eq:bnd-x-alpha-reint}, and appealing to Lemma~\ref{lem:Richard},
\begin{multline*}
\sup_{x\in\R^d\setminus(\Ic_{\e,N}+\delta B)}\iint_{\{(y,r):|x-y|<2Z_{\e,N}^\infty\}}\Big|\nabla\Gc(x-y)
-\nabla\Gc(x-\tilde X_{\e,N;0}(y))\Big|\,\tilde\nu_0(y,r)\,dyd\sigma(r)\\
\,\lesssim\,Z_{\e,N}^\infty\big(1+(N^\frac1d\dd_{\e,N}^{\min})^{1-d}\big)\|\tilde\mu_0\|_{\Ld^1\cap\Ld^\infty(\R^d)}+\tfrac1\lambda\delta(\tfrac\e\delta)^{d}.
\end{multline*}
Inserting these estimates into~\eqref{eq:estim-S2-presplit}, we are led to
\begin{equation*}
S_2\,\lesssim_h\,Z_{\e,N}^\infty\Big(\log\big(2+\tfrac1{Z_{\e,N}^\infty}\big)+(N^\frac1d\dd_{\e,N}^{\min})^{1-d}\Big)\|\tilde\mu_0\|_{\Ld^1\cap\Ld^\infty(\R^d)}
+\tfrac1\lambda\delta(\tfrac\e\delta)^{d}.
\end{equation*}
Noting that the same bound holds for $S_3$, and recalling the a priori estimate~\eqref{eq:estim-tildemu}, the claim~\eqref{eq:estim-udiff-err} follows.

\medskip
\step4 Conclusion.\\
Recall the well-preparedness assumption~\eqref{eq:cond-wellsep-prop52} for some $\theta_0\ge1$.
Given $\theta>\theta_0$, consider the maximal time
\begin{equation}\label{eq:max-time-TepsN}
T_{\e,N}^\theta\,:=\,\sup\Big\{t\in[0,T]~:~\dd_{\e,N}^{\min}(s)\ge(\theta N)^{-\frac1d},~Z_{\e,N}^{\infty}(s)\le3\theta~~\text{for all $0\le s\le t$}\Big\}.
\end{equation}
By Lemma~\ref{lem:Richard}, comparing $\mu_{\e,N}$ to $\tilde\mu_\lambda$, and recalling the a priori estimate~\eqref{eq:estim-tildemu},
we find for all~$t\in[0,T_{\e,N}^\theta]$,
\begin{eqnarray*}
\alpha_{\e,N}^0(t)&\lesssim&\theta \log(\theta+N)\|\mu^\circ\|_{\Ld^1\cap\Ld^\infty(\R^d)},\\
\alpha_{\e,N}^1(t)&\lesssim&\theta^{2-\frac1d}\|\mu^\circ\|_{\Ld^1\cap\Ld^\infty(\R^d)}.
\end{eqnarray*}
Provided that $\lambda\theta\log(\theta+N)\|\mu^\circ\|_{\Ld^1\cap\Ld^\infty(\R^d)}\ll1$ is small enough, we deduce
\[\sup_{t\in[0,T_{\e,N}^\theta]}\lambda\alpha_{\e,N}^0(t)\,\ll\,1.\]
Further assuming that $(\theta N)^{-\frac1d}\ge4\e$, we may then
appeal to the Lipschitz bounds of Proposition~\ref{prop:Lip-est} and deduce for all $t\in[0,T_{\e,N}^\theta]$,
\begin{equation*}
\tfrac{d}{dt}\dd_{\e,N}^{\min}(t)\,\ge\,- C_h\dd_{\e,N}^{\min}(t),
\end{equation*}
hence, by the Gronwall inequality and by the well-preparedness assumption~\eqref{eq:cond-wellsep-prop52},
\begin{equation}\label{eq:diff-dmin}
\dd_{\e,N}^{\min}(t)
\,\ge\,e^{-tC_h}(\theta_0N)^{-\frac1d}.
\end{equation}
The result~\eqref{eq:control-Zinfty} of Step~1 similarly yields for all $t\in[0,T_{\e,N}^\theta]$,
\begin{equation*}
Z_{\e,N}^\infty(t)\,\le\, e^{tC_{h,\mu^\circ}}Z_{\e,N}^\infty(0)+e^{tC_{h,\mu^\circ}}\big(\lambda\theta\log(\theta+ N)+\e\big),
\end{equation*}
which proves~\eqref{eq:estim-fin-Z}.
These last two estimates imply in particular that the maximal time~$T_{\e,N}^\theta$ satisfies
\[T_{\e,N}^\theta\,\ge\,\min\big\{C_{h,\mu^\circ}^{-1}\log({\theta}/{\theta_0})\,,\,T\big\}.\]
Next, using~\eqref{eq:estim-fin-Z} to control the last~$O(\lambda)$ term in the result~\eqref{eq:control-Winfty-2} of Step~2, and using again the above estimates on $\alpha_{\e,N}^0,\alpha_{\e,N}^1$,
we get for all~$t\in[0,T_{\e,N}^\theta]$,
\begin{equation*}
\tfrac{d^+}{dt}W_{\e,N}^\infty\,\lesssim_{t,h,\theta,\mu^\circ}\,W_{\e,N}^\infty
+\lambda|\!\log\lambda|Z_{\e,N}^\infty(0)
+(\lambda|\!\log\lambda|+\e)(\lambda\log N+\e)+\kappa_0\e,
\end{equation*}
and the conclusion~\eqref{eq:estim-fin-W} follows
from the Gronwall inequality.
Finally, combined with~\eqref{eq:estim-fin-Z} and with the above estimates on $\alpha_{\e,N}^0,\alpha_{\e,N}^1$, the result~\eqref{eq:estim-udiff-err} of Step~3 takes on the following guise, for any $\delta\in[\e,1]$,
\begin{multline*}
\|u_{\e,N}-\tilde u_\lambda\|_{\Ld^\infty(\R^d\setminus(\Ic_{\e,N}+\delta B))}
\,\lesssim_{t,h,\theta,\mu^\circ}\,
\lambda|\!\log\lambda|Z_{\e,N}^\infty(0)\\
+(\lambda|\!\log\lambda|+\e)(\lambda\log N+\e)
+(\tfrac\e\delta)^d(\lambda+ \delta),
\end{multline*}
which proves~\eqref{eq:estim-fin-u}.
\end{proof}

\medskip
\section{Conclusion: proof of main results}\label{sec:conclusion}
It remains to ensure the well-posedness of the macroscopic model~\eqref{eq:Doi0} as stated in Proposition~\ref{prop:well-posed-Doi}, and to conclude the proof of Theorem~\ref{th:main}.

\subsection{Proof of Proposition~\ref{prop:well-posed-Doi}}\label{sec:well-posed-Doi}
Let some non-integer $\gamma>1$ be fixed.
We proceed by an iteration argument. For $n=0$, let $\nu_0:=\nu^\circ$ and $u_0:=0$. Next, for all $n\ge0$, we iteratively define $\nu_{n+1}$ as the solution of the linear transport equation
\begin{equation}\label{eq:lin-system-mf/1}
\left\{\begin{array}{l}
\partial_t\nu_{n+1}+\Div_x(\nu_{n+1}u_n)+\Div_r(\nu_{n+1}(\Omega^\circ\nabla u_n)r)=0,\\
\nu_{n+1}|_{t=0}=\nu^\circ,
\end{array}\right.
\end{equation}
and $\nabla u_{n+1}$ as the solution of the linear Stokes equation
\begin{equation*}
\left\{\begin{array}{l}
-\triangle u_{n+1}+\nabla p_{n+1}=(1-\lambda\mu_n)h+\lambda\mu_ne+\lambda\Div\big[2\langle\Sigma^\circ\nu_n\rangle\!\D(u_n)+\kappa_0\langle\Sigma_f^\circ\nu_n\rangle\big],\\
\Div(u_{n+1})=0,
\end{array}\right.
\end{equation*}
which means, in terms of the Stokeslet $\Gc$,
\begin{equation}\label{eq:lin-system-mf/2}
u_{n+1}\,=\,\Gc\ast\big((1-\lambda\mu_n)h+\lambda \mu_ne\big)+\lambda\nabla\Gc\ast\big(2\langle\Sigma^\circ\nu_n\rangle\D(u_n)+\kappa_0\langle\Sigma_f^\circ\nu_n\rangle\big).
\end{equation}
For all $n\ge0$, if we have $u_n\in\Ld^\infty_\loc(\R^+;W^{\gamma+1,\infty}(\R^d)^{d})$, recalling that $\Omega^\circ$ is smooth and that initially $\nu^\circ\in \Pc\cap W^{1,1}\cap W^{\gamma,\infty}(\R^d\times\Sp^{d-1})$, the standard theory of transport equations in Hölder spaces
(e.g.~\cite[Theorem~3.14]{BCD-11})
ensures that~\eqref{eq:lin-system-mf/1} admits a unique weak solution $\nu_{n+1}\in\Ld^\infty_\loc(\R^+;\Pc\cap W^{1,1}\cap W^{\gamma,\infty}(\R^d\times\Sp^{d-1}))$ with
\begin{multline}\label{eq:apriori-nu-n}
\|\nu_{n+1}^t\|_{W^{1,1}\cap W^{\gamma,\infty}(\R^d\times\Sp^{d-1})}\\
\,\le\,\|\nu^\circ\|_{W^{1,1}\cap W^{\gamma,\infty}(\R^d\times \Sp^{d-1})}\exp\Big(C_\gamma t+C_\gamma\int_0^t\|u_n\|_{W^{\gamma+1,\infty}(\R^d)}\Big).
\end{multline}
Moreover, if $\nu_n\in \Ld^\infty_\loc(\R^+;\Pc\cap W^{\gamma,\infty}(\R^d\times\Sp^{d-1}))$ and $u_n\in \Ld^\infty_\loc(\R^+;W^{\gamma+1,\infty}(\R^d)^{ d})$, recalling that $\Sigma^\circ,\Sigma^\circ_f$ are smooth and using the standard theory for the Stokes equation in Hölder spaces, we find that equation~\eqref{eq:lin-system-mf/2} yields $u_{n+1}\in \Ld^\infty_\loc(\R^+;W^{\gamma+1,\infty}(\R^d)^d)$ with
\begin{equation}\label{eq:apriori-u-n}
\|u_{n+1}\|_{W^{\gamma+1,\infty}(\R^d)}\,\lesssim_{h,\gamma}\,1+\lambda \|\nu_n\|_{\Ld^1\cap W^{\gamma,\infty}(\R^d\times\Sp^{d-1})}\big(1+\|u_n\|_{W^{\gamma+1,\infty}(\R^d)}\big).
\end{equation}
By induction, this proves that we can indeed construct unique global weak solutions for the scheme~\eqref{eq:lin-system-mf/1}--\eqref{eq:lin-system-mf/2} with $\nu_{n}\in \Ld^\infty_\loc(\R^+;\Pc\cap W^{1,1}\cap W^{\gamma,\infty}(\R^d\times\Sp^{d-1}))$ and $u_n\in \Ld^\infty_\loc(\R^+;W^{\gamma+1,\infty}(\R^d)^{d})$  for all \mbox{$n\ge0$}.
From the above a priori estimates~\eqref{eq:apriori-nu-n}--\eqref{eq:apriori-u-n}, absorbing the nonlinearity for small~$\lambda$, we conclude the following: given $T>0$, provided that~$\lambda\ll_{T,h,\gamma,\nu^\circ}1$ is small enough, we have for all $n\ge0$ and $t\in[0,T]$,
\begin{eqnarray}
\|\nu_n^t\|_{W^{1,1}\cap W^{\gamma,\infty}(\R^d\times\Sp^{d-1})}&\le&e^{C_{h,\gamma}t}\|\nu^\circ\|_{W^{1,1}\cap W^{\gamma,\infty}(\R^d\times \Sp^{d-1})},\label{eq:estim-nuu-n}\\
\|u_{n}^t\|_{W^{\gamma+1,\infty}(\R^d)}&\le&C_{h,\gamma}.\nonumber
\end{eqnarray}
Further appealing to the Aubin lemma, we may then extract a subsequence of $(\nu_n, u_n)_n$ that converges strongly to some limit $(\nu_\lambda, u_\lambda)$ in $C([0,T];\Ld^1(\R^d\times\Sp^{d-1}))\times C([0,T];W^{1,\infty}(\R^d)^{d})$. Passing to the limit in the iterative scheme~\eqref{eq:lin-system-mf/1}--\eqref{eq:lin-system-mf/2}, we find that the limit $(\nu_\lambda, u_\lambda)$ precisely satisfies the Vlasov--Stokes system~\eqref{eq:Doi0}. In addition, we may also pass to the limit in the a priori estimates~\eqref{eq:estim-nuu-n}: we get for all $t\in[0,T]$,
\begin{eqnarray}
\|\nu_\lambda^t\|_{W^{1,1}\cap W^{\gamma,\infty}(\R^d\times\Sp^{d-1})}&\lesssim_{t,h,\gamma}&\|\nu^\circ\|_{W^{1,1}\cap W^{\gamma,\infty}(\R^d\times\Sp^{d-1})},\label{eq:apriori-nabuesgfv00}\\
\|u_\lambda^t\|_{W^{\gamma+1,\infty}(\R^d)}&\lesssim_{t,h,\gamma}&1.\nonumber
\end{eqnarray}

It remains to establish the uniqueness of the solution of~\eqref{eq:Doi0}.
Let $(\nu_\lambda, u_\lambda),(\nu_\lambda', u_\lambda')$ be two solutions in $\Ld^\infty_\loc([0,T];\Pc\cap W^{1,1}\cap W^{\gamma,\infty}(\R^d\times\Sp^{d-1}))\times \Ld^\infty_\loc([0,T];W^{\gamma+1,\infty}(\R^d)^{d})$
with the same initial data $\nu_\lambda|_{t=0}=\nu_\lambda'|_{t=0}=\nu^\circ$. The equation for the difference $\nu_\lambda-\nu_\lambda'$ can be written as
\begin{multline*}
\partial_t(\nu_\lambda-\nu_\lambda')+\Div_x\big((\nu_\lambda-\nu_\lambda')u_\lambda\big)+\Div_r\big((\nu_\lambda-\nu_\lambda')(\Omega^\circ\nabla u_\lambda)r\big)\\
\,=\,-\Div_x\big(\nu_\lambda'(u_\lambda-u_\lambda')\big)-\Div_r\big(\nu_\lambda'(\Omega^\circ\nabla(u_\lambda-u_\lambda'))r\big),
\end{multline*}
and the equation for $u_\lambda-u_\lambda'$ as
\begin{multline*}
u_\lambda-u_\lambda'\,=\,\lambda\Gc\ast\big((\mu_\lambda-\mu_\lambda')(e-h)\big)+\lambda\nabla\Gc\ast\big(2\langle\Sigma^\circ\nu_\lambda\rangle\D(u_\lambda-u_\lambda')\big)\\
+\lambda\nabla\Gc\ast\big(2\langle\Sigma^\circ(\nu_\lambda-\nu_\lambda')\rangle\D(u_\lambda')+\kappa_0\langle\Sigma_f^\circ(\nu_\lambda-\nu_\lambda')\rangle\big).
\end{multline*}
Similar a priori estimates as in~\eqref{eq:apriori-nu-n}--\eqref{eq:apriori-u-n} then yield for all $t\in[0, T]$,
\begin{align}\label{eq:apriori-u-diff}
&\|\nu_\lambda^t-\nu_\lambda^{\prime,t}\|_{\Ld^1\cap W^{\gamma-1,\infty}(\R^d\times\Sp^{d-1})}\,\lesssim_s\,\exp\Big(C_\gamma t+C_\gamma\int_0^t\|u_\lambda\|_{W^{\gamma,\infty}(\R^d)}\Big)\\
&\hspace{5cm}\times\int_0^t\|\nu_\lambda'\|_{W^{1,1}\cap W^{\gamma,\infty}(\R^d\times\Sp^{d-1})}\|u_\lambda-u_\lambda'\|_{W^{\gamma,\infty}(\R^d\times\Sp^{d-1})},\nonumber\\
&\|u_\lambda-u_\lambda'\|_{W^{\gamma,\infty}(\R^d)}\,\lesssim_{h,\gamma}\,\lambda\|\nu_\lambda\|_{\Ld^1\cap W^{\gamma-1,\infty}(\R^d\times\Sp^{d-1})}\|u_\lambda-u_\lambda'\|_{W^{\gamma,\infty}(\R^d\times\Sp^{d-1})}\nonumber\\
&\hspace{4cm}+\lambda\big(1+\|u_\lambda'\|_{W^{\gamma,\infty}(\R^d\times\Sp^{d-1})}\big)\|\nu_\lambda-\nu_\lambda'\|_{\Ld^1\cap W^{\gamma-1,\infty}(\R^d\times\Sp^{d-1})}.\nonumber
\end{align}
Using a priori bounds on $(\nu_\lambda,u_\lambda)$ and $(\nu_\lambda',u_\lambda')$ and recalling the choice $\lambda\ll_{T,h,\gamma,\nu^\circ}1$ to absorb the nonlinearity in the estimate for $u_\lambda-u_\lambda'$, the conclusion $(\nu_\lambda,u_\lambda)=(\nu_\lambda',u_\lambda')$ follows from the Gronwall inequality.
\qed

\subsection{Proof of Theorem~\ref{th:main}}\label{sec:concl-main}
This is obtained as a simple post-processing of Proposition~\ref{prop:MFL}.
More precisely, there are two remaining tasks:
\begin{enumerate}[---]
\item First, we need to show that the $W_\infty$-estimate of Proposition~\ref{prop:MFL} can be upgraded into a $W_p$-estimate, thus reducing well-preparedness requirements: as inspired by~\cite{Hauray-Jabin-15} (see also~\cite{Mecherbet-19,Hofer-Schubert-21}), this is done by first comparing the empirical measure $\mu_{\e,N}$ to the corresponding continuum solution of~\eqref{eq:tsp-munu} with blob initial condition, cf.~\eqref{eq:blobID} below.
\smallskip\item Second, we need to compare the mean-field transport equations~\eqref{eq:tsp-munu} to the Doi-type kinetic model~\eqref{eq:Doi0} up to $O(\lambda^2)$ errors.
\end{enumerate}
We split the proof into three steps.

\medskip
\step1 Approximation by blob solution.\\
Consider the following blob initial condition, which is a regularized version of the initial empirical measure $\mu_{\e,N}^\circ$,
\begin{equation}\label{eq:blobID}
\nu_N^\circ(x,r)\,:=\,\tfrac1N\sum_{n=1}^N\phi_N(x-X_{\e,N}^{n;\circ})\,\gamma_N(r,R_{\e,N}^{n;\circ})~~\in~\Pc(\R^d\times\Sp^{d-1}),
\end{equation}
where the kernels $\phi_N$ and $\gamma_N$ are smooth, nonnegative, and satisfy
\begin{gather*}
\int_{\R^d}\phi_N=1,\qquad \|\phi_N\|_{\Ld^\infty(\R^d)}\lesssim N,\qquad \phi_N(x)=0~~\text{for $|x|>N^{-\frac1d}$},\\
\int_{\Sp^{d-1}}\gamma_N(\cdot,R)=1,\qquad\gamma_N(r,R)=0~~\text{for $\dist(r,R)>N^{-\frac1d}$},
\end{gather*}
for all $R\in\Sp^{d-1}$.
Under the well-preparedness assumption~\eqref{eq:prepared-thmain}, the associated spatial density~$\mu_N^\circ:=\langle\nu_N^\circ\rangle$ satisfies
\[\|\mu_N^\circ\|_{\Ld^\infty(\R^d)}\,\le\,\sup_{x\in\R^d}\sharp\big\{n:|x-X_{\e,N}^{n;\circ}|\le N^{-\frac1d}\big\}\,\lesssim\,1,\]
and moreover, comparing to the initial empirical measures $\nu_{\e,N}^\circ,\mu_{\e,N}^\circ$, we get by definition
\begin{equation}\label{eq:blob-initWinfty}
W_\infty(\nu_{\e,N}^\circ,\nu^\circ_N)+W_\infty(\mu_{\e,N}^\circ,\mu^\circ_N)\,\lesssim\,N^{-\frac1d}.
\end{equation}
We then consider the solutions $\tilde\nu_{N,0},\tilde\nu_{N,\lambda}$ of the mean-field transport equations~\eqref{eq:tsp-munu} starting from $\nu_N^\circ$, that is,
\begin{equation}\label{eq:tsp-munu-N0}
\left\{\begin{array}{l}
\partial_t\tilde\nu_{N,0}+\Div_x(\tilde\nu_{N,0} (\Gc\ast h))+\Div_r(\tilde\nu_{N,0}\tilde\Omega r)=0,\\
\partial_t\tilde\nu_{N,\lambda}+\Div_x(\tilde\nu_{N,\lambda}\tilde u_{N,\lambda})+\Div_r(\tilde\nu_{N,\lambda}\tilde\Omega r)=0,\\
\tilde\nu_{N,0}|_{t=0}=
\tilde\nu_{N,\lambda}|_{t=0}=\nu^\circ_N,
\end{array}\right.
\end{equation}
where the translational and angular velocity fields $\tilde u_{N,\lambda}$ and $\tilde\Omega$ are given by
\begin{eqnarray}
\tilde u_{N,\lambda}(x)&:=&\big[\Gc\ast \big((1-\lambda\tilde\mu_{N,0})h+\lambda\tilde\mu_{N,0} e\big)\big](x)\label{eq:defin-uNlambda0}\\
&&\qquad+\lambda\big[\nabla\Gc\ast\big(2\langle\Sigma^\circ\tilde\nu_{N,0}\rangle\D(\Gc\ast h)+\kappa_0 \langle \Sigma_f^\circ\tilde\nu_{N,0}\rangle\big)\big](x),\nonumber\\
\tilde \Omega(x,r)&:=&\Omega^\circ(r)\nabla(\Gc\ast h)(x),\nonumber
\end{eqnarray}
in terms of spatial densities $\tilde\mu_{N,0}:=\langle\tilde\nu_{N,0}\rangle$ and $\tilde\mu_{N,\lambda}:=\langle\tilde\nu_{N,\lambda}\rangle$.
As $\nu_N^\circ$ is smooth, the linear transport equation for $\tilde \nu_{N,0}$ indeed admits a unique (global) solution in $C(\R^+;\Pc\cap W^{1,\infty}(\R^d\times\Sp^{d-1}))$, and the linear transport equation for $\tilde\nu_{N,\lambda}$ then admits a unique solution in $C(\R^+;\Pc\cap\Ld^\infty(\R^d\times\Sp^{d-1}))$. In addition,
averaging the equations over $r$, we find
\[\partial_t\tilde\mu_{N,0}+\Div(\tilde\mu_{N,0}(\Gc\ast h))=0,\qquad \partial_t\tilde\mu_{N,\lambda}+\Div(\tilde\mu_{N,\lambda}\tilde u_{N,\lambda})=0,\]
from which we deduce the following a priori estimates for spatial densities,
\begin{equation}\label{eq:Linfty-bound-muN0}
\sup_t\|\tilde\mu_{N,0}^t\|_{\Ld^\infty(\R^d)}\,=\,\sup_t\|\tilde\mu_{N,\lambda}^t\|_{\Ld^\infty(\R^d)}\,=\,\|\mu_{N}^\circ\|_{\Ld^\infty(\R^d)}\,\lesssim\,1.
\end{equation}
We are now in position to apply Proposition~\ref{prop:MFL} with $\tilde\nu_0,\tilde\nu_\lambda$ replaced by $\tilde\nu_{N,0},\tilde\nu_{N,\lambda}$. Using~\eqref{eq:blob-initWinfty} and~\eqref{eq:Linfty-bound-muN0}, and recalling that we have $\e\ll N^{-1/d}$ in the dilute regime $\lambda\ll1$, we obtain the following result under the sole well-preparedness assumption~\eqref{eq:prepared-thmain}: given~$T>0$, provided that $\lambda\log N\ll1$ is small enough (depending on $T,h,\mu^\circ$), we have for all $t\in[0,T]$ and $\delta\in[\e,1]$,
\begin{eqnarray}
W_\infty(\nu_{\e,N}^t,\tilde\nu_{N,\lambda}^t)\!\!\!&\lesssim_{t,h,\mu^\circ}&\!\!\!N^{-\frac1d}+\lambda\log N,\label{eq:estim-Winfty-prepared}\\
W_\infty(\mu_{\e,N}^t,\tilde\mu_{N,\lambda}^t)\!\!\!&\lesssim_{t,h,\mu^\circ}&\!\!\!N^{-\frac1d}+\lambda^2|\!\log\lambda|\log N,\nonumber\\
\|u_{\e,N}^t-\tilde u_{N,\lambda}^t\|_{\Ld^\infty(\R^d\setminus(\Ic_{\e,N}+\delta B))}\!\!\!&\lesssim_{t,h,\mu^\circ}&\!\!\!N^{-\frac1d}+\lambda^2|\!\log\lambda|\log N+(\tfrac\e\delta)^d(\lambda+\delta).\nonumber
\end{eqnarray}

\medskip
\step2 Comparing the blob solution of~\eqref{eq:tsp-munu-N0} to the Doi model~\eqref{eq:Doi0}: given $(\nu_\lambda,u_\lambda)$ the solution of~\eqref{eq:Doi0} up to time $T$ as constructed in Proposition~\ref{prop:well-posed-Doi},
we prove that for all~$t\in[0,T]$ and $1< p<\infty$,
\begin{eqnarray}
W_p(\nu_\lambda^t,\tilde\nu_{N,\lambda}^t)&\lesssim_{t,h,p,\nu^\circ}&\lambda+W_p(\nu^\circ,\nu_N^\circ),\label{eq:estim-nu-tildenu}\\
W_p(\mu_\lambda^t,\tilde\mu_{N,\lambda}^t)&\lesssim_{t,h,p,\nu^\circ}&\lambda^2+\lambda W_p(\nu^\circ,\nu_N^\circ)+W_p(\mu^\circ,\mu_N^\circ),\label{eq:estim-mu-tildemu}\\
\|u_\lambda^t-\tilde u_{N,\lambda}^t\|_{(\Ld^p+\Ld^\infty)(\R^d)}&\lesssim_{t,h,p,\nu^\circ}&\lambda^2+\lambda W_p(\nu^\circ,\nu_N^\circ),\label{eq:estim-u-tildeu-Ws-re}
\end{eqnarray}
and for $p=1$,
\begin{eqnarray}
W_1(\nu_\lambda,\tilde\nu_{N,\lambda})&\lesssim_{t,h,\nu^\circ}&\lambda+W_1(\nu^\circ,\nu_{N}^\circ),\nonumber\\
W_1(\mu_\lambda,\tilde\mu_{N,\lambda})&\lesssim_{t,h,\nu^\circ}&\lambda^2|\!\log\lambda|+\lambda|\!\log\lambda| W_1(\nu^\circ,\nu_{N}^\circ)+W_1(\mu^\circ,\mu_{N}^\circ),\nonumber\\
\|u_\lambda^t-\tilde u_{N,\lambda}^t\|_{(\Ld^1+\Ld^\infty)(\R^d)}&\lesssim_{t,h,\nu^\circ}&\lambda^2|\!\log\lambda|+\lambda|\!\log\lambda|W_1(\nu^\circ,\nu_{N}^\circ).\label{eq:estim-u-tildeu-Ws-re-p1}
\end{eqnarray}
We split the proof into two further substeps.

\medskip
\substep{2.1} Proof that for all $t\in[0,T]$ and $1<p<\infty$,
\begin{equation}\label{eq:estim-ulambd-tildeuNlamstep1}
{\|u_\lambda^t-\tilde u_{N,\lambda}^t\|_{(\Ld^p+\Ld^{\infty})(\R^d)}}
\,\lesssim_{t,h,p,\nu^\circ}\,\lambda^2+\lambda W_p(\nu_\lambda,\tilde\nu_{N,0}),
\end{equation}
and for $p=1$,
\begin{equation}\label{eq:estim-ulambd-tildeuNlamstep1-re}
{\|u_\lambda^t-\tilde u_{N,\lambda}^t\|_{(\Ld^1+\Ld^{\infty})(\R^d)}}
\,\lesssim_{t,h,\nu^\circ}\,\lambda^2+\lambda W_1(\nu_\lambda,\tilde\nu_{N,0})\log\big(2+\tfrac1{W_1(\nu_\lambda,\tilde\nu_{N,0})}\big).
\end{equation}
Comparing the Stokes equation for $u_\lambda$ in~\eqref{eq:Doi0} with the definition~\eqref{eq:defin-uNlambda0} of $\tilde u_{N,\lambda}$, we find
\begin{multline}\label{eq:decomp-ul-tildeul}
u_\lambda-\tilde u_{N,\lambda}\,=\,\lambda\Gc\ast\big((\mu_\lambda-\tilde\mu_{N,0})(e-h)\big)+\lambda\nabla\Gc\ast\big(2\langle \Sigma^\circ\nu_\lambda\rangle\D(u_\lambda-\Gc\ast h)\big)\\
+\lambda\nabla\Gc\ast\big(2\langle\Sigma^\circ(\nu_\lambda-\tilde\nu_{N,0})\rangle\D(\Gc\ast h)
+\kappa_0\langle\Sigma_f^\circ(\nu_\lambda-\tilde\nu_{N,0})\rangle\big).
\end{multline}
Given $1< p<\infty$, choosing $1< p_0\le p$ such that $p_0<d$ and $q_0:=\frac{dp_0}{d-p_0}\ge p$, we appeal to the Sobolev embedding $\|f\|_{\Ld^{q_0}(\R^d)}\le K_{p_0}\|\nabla f\|_{\Ld^{p_0}(\R^d)}$, where $K_{p_0}$ is a constant only depending on $d$ and on an upper bound on $(d-p_0)^{-1}$,
and we appeal to standard Calder\'on--Zygmund theory in form of $\|\nabla^2\Gc\ast\|_{\Ld^{s}(\R^d)\to\Ld^{s}(\R^d)}\lesssim\frac{s^2}{s-1}$ for $1<s<\infty$, to the effect of
\begin{eqnarray*}
\lefteqn{\|u_\lambda-\tilde u_{N,\lambda}\|_{(\Ld^p+\Ld^{\infty})(\R^d)}}\\
&\le&\lambda\big\|\Gc\ast\big((\mu_\lambda-\tilde\mu_{N,0})(e-h)\big)\big\|_{\Ld^{q_0}(\R^d)}+\lambda\big\|\nabla\Gc\ast\big(2\langle \Sigma^\circ\nu_{\lambda}\rangle\D(u_\lambda-\Gc\ast h)\big)\big\|_{\Ld^{\infty}(\R^d)}\\
&&+\lambda\big\|\nabla\Gc\ast\big(2\langle\Sigma^\circ(\nu_\lambda-\tilde\nu_{N,0})\rangle\D(\Gc\ast h)
+\kappa_0\langle\Sigma_f^\circ(\nu_\lambda-\tilde\nu_{N,0})\rangle\big)\big\|_{\Ld^p(\R^d)}\\
&\lesssim&\lambda \tfrac{K_{p_0}}{p_0-1}\|(\mu_\lambda-\tilde\mu_{N,0})(e-h)\|_{\dot W^{-1,p_0}(\R^d)}
+\lambda\|\langle \Sigma^\circ\nu_{\lambda}\rangle\D(u_\lambda-\Gc\ast h)\|_{\Ld^{1}\cap\Ld^\infty(\R^d)}\\
&&+\lambda\tfrac{p^2}{p-1}\|\langle\Sigma^\circ(\nu_\lambda-\tilde\nu_{N,0})\rangle\D(\Gc\ast h)\|_{\dot W^{-1,p}(\R^d)}
+\lambda\tfrac{p^2}{p-1}\|\langle\Sigma_f^\circ(\nu_\lambda-\tilde\nu_{N,0})\rangle\|_{\dot W^{-1,p}(\R^d)}.
\end{eqnarray*}
Now note that, for all $1\le s\le q<\infty$ and $r>d$, we can bound for all $f,g\in C^\infty_c(\R^d)$ with $\supp f\subset B$,
\[\|fg\|_{\dot W^{-1,s}(\R^d)}\,\lesssim_r\, \|f\|_{\dot W^{-1,q}(\R^d)}\|g\|_{W^{1,r}(B)}.\]
In order to apply this to the above, we need to control the support of $\mu_\lambda,\tilde\mu_{N,0}$: as $\mu^\circ$ is compactly supported, as assumption~\eqref{eq:prepared-thmain} ensures that the blob initial condition $\mu_N^\circ$ is also compactly supported (uniformly with respect to $N$), and recalling the uniform velocity estimate~\eqref{eq:apriori-nabuesgfv00}, we have for all $t\in[0,T]$,
\begin{equation}\label{eq:compact-support}
\supp\mu_\lambda^t~\subset~C_{t,h,\mu^\circ}B,\qquad\quad\supp \mu_{N,0}^t~\subset~C_{t,h}B.
\end{equation}
The above then becomes
\begin{multline*}
{\|u_\lambda-\tilde u_{N,\lambda}\|_{(\Ld^p+\Ld^{\infty})(\R^d)}}
\,\lesssim_{t,h,\mu^\circ}\,\lambda \tfrac{K_{p_0}}{p_0-1}\|\mu_\lambda-\tilde\mu_{N,0}\|_{\dot W^{-1,p}(\R^d)}
+\lambda\|\!\D(u_\lambda-\Gc\ast h)\|_{\Ld^{\infty}(\R^d)}\\
+\lambda\tfrac{p^2}{p-1}\|\langle\Sigma^\circ(\nu_\lambda-\tilde\nu_{N,0})\rangle\|_{\dot W^{-1,p}(\R^d)}
+\lambda\tfrac{p^2}{p-1}\|\langle\Sigma_f^\circ(\nu_\lambda-\tilde\nu_{N,0})\rangle\|_{\dot W^{-1,p}(\R^d)}.
\end{multline*}
To estimate the second right-hand side term, we start from the Stokes equation for $u_\lambda$ in~\eqref{eq:Doi0} in the following form, instead of~\eqref{eq:decomp-ul-tildeul},
\begin{equation*}
u_\lambda-\Gc\ast h\,=\,\lambda\Gc\ast(\mu_\lambda(e-h))
+\lambda\nabla\Gc\ast\big(2\langle\Sigma^\circ\nu_\lambda\rangle\D(u_\lambda)
+\kappa_0\langle\Sigma_f^\circ\nu_\lambda\rangle\big),
\end{equation*}
and we note that the a priori estimates~\eqref{eq:apriori-nabuesgfv00} for $\nu_\lambda,u_\lambda$ yield
\begin{equation}\label{eq:estim-order0ula}
\|u_\lambda-\Gc\ast h\|_{W^{1,\infty}(\R^d)}\,\lesssim_{t,h,\nu^\circ}\,\lambda,
\end{equation}
so we are led to
\begin{multline}\label{eq:firstprediffultildeu}
{\|u_\lambda-\tilde u_{N,\lambda}\|_{(\Ld^p+\Ld^{\infty})(\R^d)}}
\,\lesssim_{t,h,\nu^\circ}\,\lambda^2 +\lambda \tfrac{K_{p_0}}{p_0-1}\|\mu_\lambda-\tilde\mu_{N,0}\|_{\dot W^{-1,p}(\R^d)}\\
+\lambda\tfrac{p^2}{p-1}\|\langle\Sigma^\circ(\nu_\lambda-\tilde\nu_{N,0})\rangle\|_{\dot W^{-1,p}(\R^d)}
+\lambda\tfrac{p^2}{p-1}\|\langle\Sigma_f^\circ(\nu_\lambda-\tilde\nu_{N,0})\rangle\|_{\dot W^{-1,p}(\R^d)}.
\end{multline}
Now we recall the following corollary of Loeper's argument in~\cite{Loeper-06} (see e.g.~\cite[Proposition~5.1]{Hofer-Schubert-21}): for all $1\le s<\infty$, we can bound
\[\|\mu_\lambda-\tilde\mu_{N,0}\|_{\dot W^{-1,s}(\R^d)}\,\le\,\big(\|\mu_\lambda\|_{\Ld^\infty(\R^d)}+\|\tilde\mu_{N,0}\|_{\Ld^\infty(\R^d)}\big)^{1-\frac1s}\,W_s(\mu_\lambda,\tilde\mu_{N,0}),\]
and thus, by the a priori estimate~\eqref{eq:Linfty-bound-muN0},
\[\|\mu_\lambda-\tilde\mu_{N,0}\|_{\dot W^{-1,s}(\R^d)}\,\lesssim_{\mu^\circ}\,W_s(\mu_\lambda,\tilde\mu_{N,0}).\]
Using the smoothness of $\Sigma^\circ,\Sigma_f^\circ$, a similar argument yields for $1\le s<\infty$,
\begin{multline*}
\|\langle\Sigma^\circ(\nu_\lambda-\tilde\nu_{N,0})\rangle\|_{\dot W^{-1,s}(\R^d)}+\|\langle\Sigma_f^\circ(\nu_\lambda-\tilde\nu_{N,0})\rangle\|_{\dot W^{-1,s}(\R^d)}\\
\,\lesssim\,\big(\|\mu_\lambda\|_{\Ld^\infty(\R^d)}+\|\tilde\mu_{N,0}\|_{\Ld^\infty(\R^d)}\big)^{1-\frac1s}\,W_s(\nu_\lambda,\tilde\nu_{N,0})
\,\lesssim_{\mu^\circ}\,W_s(\nu_\lambda,\tilde\nu_{N,0}).
\end{multline*}
Inserting these estimates into~\eqref{eq:firstprediffultildeu}, we get
\begin{equation*}
{\|u_\lambda-\tilde u_{N,\lambda}\|_{(\Ld^p+\Ld^{\infty})(\R^d)}}
\,\lesssim_{t,h,\nu^\circ}\,\lambda^2 +\lambda \Big(\tfrac{K_{p_0}}{p_0-1}+\tfrac{p^2}{p-1}\Big)W_{p}(\nu_\lambda,\tilde\nu_{N,0}),
\end{equation*}
which proves the claim~\eqref{eq:estim-ulambd-tildeuNlamstep1} for $1<p<\infty$.
It remains to cover the case $p=1$.
For that purpose, we first note that for $1<p<\frac{d}{d-1}$ we can choose $p_0=p$, so the above actually becomes
\begin{equation*}
{\|u_\lambda-\tilde u_{N,\lambda}\|_{(\Ld^p+\Ld^{\infty})(\R^d)}}
\,\lesssim_{t,h,\nu^\circ}\, \lambda^2+ \tfrac{\lambda}{p-1} W_{p}(\nu_\lambda,\tilde\nu_{N,0}).
\end{equation*}
Now recall that for measures $\mu,\nu$ supported in a common ball of diameter $R$ we have
\[W_p(\mu,\nu)\le R^{1-\frac1p}W_1(\mu,\nu)^{\frac1p}.\]
By~\eqref{eq:compact-support}, this allows to deduce for all $1<p<\frac{d}{d-1}$,
\begin{equation*}
{\|u_\lambda-\tilde u_{N,\lambda}\|_{(\Ld^1+\Ld^{\infty})(\R^d)}}
\,\lesssim_{t,h,\nu^\circ}\, \lambda^2+\tfrac{\lambda}{p-1}W_1(\nu_\lambda,\tilde\nu_{N,0})^{\frac1p},
\end{equation*}
and the claim~\eqref{eq:estim-ulambd-tildeuNlamstep1-re} follows after optimizing with respect to $p$.

\medskip
\substep{2.2} Proof of~\eqref{eq:estim-nu-tildenu}--\eqref{eq:estim-u-tildeu-Ws-re-p1}.\\
For $1\le p<\infty$, as the probability measures $\nu_\lambda$ and $\tilde\nu_{N,\lambda}$ are absolutely continuous, the $p$-Wasserstein distance $W_p(\nu_{\lambda},\tilde\nu_{N,\lambda})$ admits an optimal transport map:
for all $t\in[0,T]$, we denote by $(\tilde X_{\lambda}^t,\tilde R_{\lambda}^t):\R^d\times\Sp^{d-1}\to\R^d\times\Sp^{d-1}$ an optimal transport map such that
\begin{gather*}
(\tilde X_{\lambda}^t,\tilde R_{\lambda}^t)_*\nu_\lambda^t\,=\,\tilde\nu_{N,\lambda}^t,\\
W_p(\nu_{\lambda}^t,\tilde\nu_{N,\lambda}^t)\,=\,\bigg(\iint_{\R^d\times\Sp^{d-1}}\Big(|x-\tilde X_{\lambda}^t(x,r)|^p+|r-\tilde R_{\lambda}^t(x,r)|^p\Big)\,\nu_\lambda^t(x,r)\,dxd\sigma(r)\bigg)^\frac1p.
\end{gather*}
We can compute the time-derivative of this Wasserstein distance using characteristics, comparing the transport equations for $\nu_\lambda$ and $\tilde\nu_{N,\lambda}$, cf.~\eqref{eq:Doi0} and~\eqref{eq:tsp-munu-N0},
\begin{multline*}
\tfrac{d^+}{dt}W_p(\nu_\lambda,\tilde\nu_{N,\lambda})\,\le\,\bigg(\iint_{\R^d\times\Sp^{d-1}}\Big(|u_\lambda(x)-\tilde u_{N,\lambda}(\tilde X_\lambda(x,r))|^p\\
+\big|(\Omega^\circ(r)\nabla u_\lambda(x))r-(\Omega^\circ(\tilde R_\lambda(x,r))\nabla(\Gc\ast h)(\tilde X_\lambda(x,r)))\tilde R_\lambda(x,r)\big|^p\Big)\,\nu_\lambda(x,r)\,dxd\sigma(r)\bigg)^\frac1p.
\end{multline*}
Using the Lipschitz regularity of $\Omega^\circ$ and the $C^2$ regularity of $u_\lambda$, cf.~\eqref{eq:apriori-nabuesgfv00}, and using that~$(\tilde X_{\lambda},\tilde R_{\lambda})_*\nu_\lambda\,=\,\tilde\nu_{N,\lambda}$, we deduce
\begin{multline*}
\tfrac{d^+}{dt}W_p(\nu_\lambda,\tilde\nu_{N,\lambda})\,\lesssim_{t,h}\,W_p(\nu_\lambda,\tilde\nu_{N,\lambda})\\
+\bigg(\iint_{\R^d\times\Sp^{d-1}}\Big(|u_\lambda(x)-\tilde u_{N,\lambda}(x)|^p
+|\nabla( u_\lambda-\Gc\ast h)(x)|^p\Big)\,\tilde\nu_{N,\lambda}(x,r)\,dxd\sigma(r)\bigg)^\frac1p.
\end{multline*}
Now inserting the result~\eqref{eq:estim-ulambd-tildeuNlamstep1} of Substep~{2.1}, as well as~\eqref{eq:estim-order0ula}, we deduce for all $t\in[0,T]$ and~$1<p<\infty$,
\begin{equation*}
\tfrac{d^+}{dt}W_p(\nu_\lambda,\tilde\nu_{N,\lambda})\,\lesssim_{t,h,p,\nu^\circ}\,W_p(\nu_\lambda,\tilde\nu_{N,\lambda})+\lambda\big(1+W_p(\nu_\lambda,\tilde\nu_{N,0})\big).
\end{equation*}
Similar estimates yield
\[\tfrac{d^+}{dt}W_p(\nu_\lambda,\tilde\nu_{N,0})\,\lesssim_{t,h,p,\nu^\circ}\, W_p(\nu_\lambda,\tilde\nu_{N,0})+\lambda,\]
and
\[\tfrac{d^+}{dt}W_p(\mu_\lambda,\tilde\mu_{N,\lambda})\,\lesssim_{t,h,p,\nu^\circ}\, W_p(\mu_\lambda,\tilde\mu_{N,\lambda})+\lambda W_p(\nu_\lambda,\tilde\nu_{N,0})+\lambda^2.\]
By the Gronwall inequality, this proves~\eqref{eq:estim-nu-tildenu} and~\eqref{eq:estim-mu-tildemu}. Combining with the result~\eqref{eq:estim-ulambd-tildeuNlamstep1} of Substep~2.1, the claim~\eqref{eq:estim-u-tildeu-Ws-re} follows as well.
In case $p=1$, rather starting from the result~\eqref{eq:estim-ulambd-tildeuNlamstep1-re} of Substep~{2.1}, the claim~\eqref{eq:estim-u-tildeu-Ws-re-p1} follows similarly.

\medskip
\step3 Conclusion.\\
By the triangle inequality, noting that $W_p\le W_\infty$ and combining~\eqref{eq:estim-Winfty-prepared} and~\eqref{eq:estim-nu-tildenu}, we find for all $t\in[0,T]$ and~$1\le p<\infty$,
\begin{eqnarray*}
W_p(\nu_{\e,N}^t,\nu_\lambda^t)&\le&W_\infty(\nu_{\e,N}^t,\tilde\nu_{N,\lambda}^t)+W_p(\nu_\lambda^t,\tilde\nu_{N,\lambda}^t)\\
&\lesssim_{t,h,p,\nu^\circ}&\lambda\log N+N^{-\frac1d}+W_p(\nu^\circ,\nu_N^\circ),
\end{eqnarray*}
and thus, further using~\eqref{eq:blob-initWinfty},
\begin{equation*}
W_p(\nu_{\e,N}^t,\nu_\lambda^t)
\,\lesssim_{t,h,p,\nu^\circ}\,\lambda\log N+N^{-\frac1d}+W_p(\nu_{\e,N}^\circ,\nu^\circ).
\end{equation*}
Similarly combining~\eqref{eq:estim-Winfty-prepared} with~\eqref{eq:estim-mu-tildemu}--\eqref{eq:estim-u-tildeu-Ws-re-p1}, using the uniform estimate of Proposition~\ref{prop:Lip-est} to estimate the $\Ld^1$ norm of $u_{\e,N}-u_\lambda$ in $\Ic_{\e,N}+\delta B$, and choosing $\delta=N^{-1/d}$, the conclusion follows.
\qed

\section*{Acknowledgements}
The author thanks Alexandre Girodroux-Lavigne, Antoine Gloria, Matthieu Hillairet, Richard Höfer, and Amina Mecherbet for related discussions, and he acknowledges financial support from the F.R.S.-FNRS as well as from the European Union (ERC, PASTIS, Grant Agreement n$^\circ$101075879).\footnote{{Views and opinions expressed are however those of the authors only and do not necessarily reflect those of the European Union or the European Research Council Executive Agency. Neither the European Union nor the granting authority can be held responsible for them.}}

\bibliographystyle{plain}
\bibliography{biblio}

\end{document}